\newcommand{\cmark}{\ding{51}}%
\newcommand{\xmark}{\ding{55}}%
\newcommand{\qmark}{\textbf{?}}
\pgfplotsset{compat=1.18}
\setlist{nosep}
\setlist[enumerate,1]{label=(\roman*),font=\normalfont}
\DeclareMathOperator{\image}{im}
\DeclareMathOperator{\kernel}{ker}
\DeclareMathOperator{\codim}{codim}
\DeclareMathOperator{\linspan}{span}
\DeclareMathOperator{\clos}{cl}
\DeclareMathOperator{\conv}{conv}
\DeclareMathOperator{\syzygy}{Syz}
\DeclareMathOperator{\interior}{int}
\newcommand{\diff}{\mathop{}\!{\mathrm{d}}}
\newcommand{\nVert}[1]{\lVert#1\rVert}
\newcommand{\sAngle}[1]{\langle#1\rangle}
\newcommand{\pAngle}[2]{\langle#1,#2\rangle}
\newcommand{\transpose}{\mathsf{T}}
\newcommand{\mathcolon}{\mathbin{:}}
\newcommand{\Pythagoras}{\mathrm{py}}
\def\ubar#1{\underline{\sbox\tw@{$#1$}\dp\tw@\z@\box\tw@}}
\def\defcal#1{%
\expandafter\newcommand\csname cal#1\endcsname{\mathcal{#1}}}
\edef\y{\@Alph\count@}%
\def\defrm#1{%
\expandafter\newcommand\csname rm#1\endcsname{\mathrm{#1}}}
\edef\y{\@Alph\count@}%
\def\defscr#1{%
\expandafter\newcommand\csname scr#1\endcsname{\mathscr{#1}}}
\edef\y{\@Alph\count@}%
\def\defbf#1{%
\expandafter\newcommand\csname bf#1\endcsname{\mathbf{#1}}}
\edef\y{\@Alph\count@}%
\def\defbb#1{%
\expandafter\newcommand\csname bb#1\endcsname{\mathbb{#1}}}
\edef\y{\@Alph\count@}%
\def\defbf#1{%
\expandafter\newcommand\csname bd#1\endcsname{\mathbold{#1}}}
\edef\y{\@alph\count@}%
\def\defrm#1{%
\expandafter\newcommand\csname rm#1\endcsname{\mathrm{#1}}}
\edef\y{\@alph\count@}%
\newtheorem{theorem}{Theorem}[section]
\newtheorem*{theorem*}{Theorem}
\newtheorem{lemma}[theorem]{Lemma}
\newtheorem{corollary}[theorem]{Corollary}
\newtheorem{proposition}[theorem]{Proposition}
\theoremstyle{remark}
\theoremstyle{remark}
\theoremstyle{definition}
\newtheorem{example}[theorem]{Example}
\begin{document}

\title{Spurious local minima in nonconvex sum-of-squares optimization}
\author{Grigoriy Blekherman}
\address{School of Mathematics, Georgia Institute of Technology, 686 Cherry Street, Atlanta, GA 30332} 
\email{greg@math.gatech.edu}
\author{Rainer Sinn}
\address{Mathematisches Institut, Georg-August-Universität Göttingen, Bunsenstraße 3-5, D-37073 Göttingen} 
\email{rainer.sinn@mathematik.uni-goettingen.de}
\author{Mauricio Velasco}
\address{Centro de Matemática (CMAT), Facultad de Ciencias, Universidad de la República, Iguá 4225 esq.\ Mataojo, C.P.\ 11400, Montevideo, Uruguay}
\email{mvelasco@cmat.edu.uy}
\author{Shixuan Zhang}
\address{Wm Michael Barnes '64 Department of Industrial \& Systems Engineering, Texas A\&M University, 101 Bizell Street, College Station, TX 77843}
\email{shixuan.zhang@tamu.edu}
\date{\MonthYear\today}

\subjclass[2020]{Primary 90C22, 90C26, 14P99, 14M99; Secondary 13D02}
\keywords{sums of squares, low-rank semidefinite programming, nonconvex optimization, variety of minimal degree, spurious local minimum.}
\begin{abstract}
    We study spurious second-order stationary points and local minima in a nonconvex low-rank formulation of sum-of-squares optimization on a real variety $X$. We reformulate the problem of finding a spurious local minimum in terms of syzygies of the underlying linear series, and also bring in topological tools to study this problem.
    When the variety $X$ is of minimal degree, there exist spurious second-order stationary points if and only if both the dimension and the codimension of the variety are greater than one, answering a question by Legat, Yuan, and Parrilo.
    Moreover, for surfaces of minimal degree, we provide sufficient conditions to exclude points from being spurious local minima.
    In particular, all second-order stationary points associated with infinite Gram matrices on the Veronese surface, corresponding to ternary quartics, lie on the boundary and can be written as a binary quartic, up to a linear change of coordinates, complementing work by Scheiderer on decompositions of ternary quartics as a sum of three squares.
    For general varieties of higher degree, we give examples and characterizations of spurious second-order stationary points in the interior, together with a restricted path algorithm that avoids such points with controlled step sizes, and numerical experiment results illustrating the empirical successes on plane cubic curves and Veronese varieties.
\end{abstract}

\maketitle

\section{Introduction}
Sums of squares play an important role in both real algebraic geometry and optimization. 
A sum-of-squares representation of a real polynomial guarantees its nonnegativity and thus provides a certified lower bound on its minimum value~\cite{blekherman2012semidefinite}. 
Such representations can generally be found by solving linear matrix inequalities (LMIs), or feasibility of semidefinite programs (SDPs), a task generally solvable in polynomial time up to a prescribed error tolerance, through interior-point methods (IPMs)~\cite{nesterov1994interior}.

From a practical point of view, however, the heavy computational burden of the matrix factorization steps in IPMs limits their applicability and motivates the need for alternative low-rank optimization methods for solving SDPs and LMIs. 
A possible approach, proposed for instance in the celebrated Burer-Monteiro method~\cite{burer2003nonlinear,burer2005local}, is to use a nonconvex reformulation of the optimization problem and to solve it via local (gradient-based) descent methods. 
In principle, this nonconvex approach leads to an easier computation in each iteration, at the expense of possibly losing guaranteed convergence to a global minimum. 
However, a significant amount of recent work~\cite{boumal2016nonconvex,pumir2018smoothed,boumal2020deterministic,cifuentes2021burer,cifuentes2022polynomial} shows that in some cases such formulations lead to nonconvex optimization problems having \emph{no spurious local minima}, that is, problems in which every local minimum is a global minimum and thus can often be solved to global optimality by local descent algorithms. 
Such findings are in line with the practical success of these approaches~\cite{singer2011angular,davenport2016overview,majumdar2020recent}.

The point of departure of this work is a remarkable result by Legat, Yuan, and Parrilo~\cite{legat2023low} who showed that for univariate polynomials we can use a low-rank Burer-Monteiro method 
to find optimal solutions without encountering any spurious local minima in sum-of-squares optimization. This leads to significant speed-ups in computation of large examples. Our aim is to better understand and generalize this phenomenon. We establish general links between the algebraic geometry of real projective varieties and the differential geometry of nonconvex formulations for sum-of-squares optimization problems, and then apply them to specific instances.  
We discover general conditions which guarantee that sets of spurious local minima are small, at least in the interior of the sum-of-squares cone, and prove classification theorems describing them completely on in some cases.

There are two natural ways of generalizing the result of ~\cite{legat2023low}. The first is to look at other instances where nonnegative polynomials are the same as sums of squares. These are classified in terms of \emph{varieties of minimal degree} \cite{blekherman2016sums}. Univariate polynomials correspond to curves of minimal degree, and we examine \emph{surfaces of minimal degree} in detail. This includes the celebrated case of ternary quartics in Hilbert's Theorem \cite{hilbert1888ueber}, as well as 
$2\times 2$ matrices with univariate polynomial entries in the variable $t$, which are positive semidefinite for any value of $t$. This clearly generalizes the univariate case, which corresponds to $1\times 1$ matrices.  For surfaces of minimal degree we do find spurious local minima, but they can lie only on the boundary of the cone of sums-of-squares, which is still friendly for computations. We demonstrate by extensive computational experiments that the Burer-Monteiro method (with rank $3$) scales better than the standard SDP algorithm in these cases. 

The second natural direction is to look at curves of higher degree, the simplest example being cubic curves in the plane. Here we also establish via extensive computations that the Burer-Monteiro method (with rank $3$) scales better. In the negative direction, we use our general framework to show that for large number of variables, the Burer-Monteiro method will encounter spurious local minima, even when we go to a quite large rank.
In the rest of this section, we introduce the notation, present our low-rank formulation, and provide an overview of the main results.

\subsection{Low-rank sum-of-squares formulation}

To describe our results in detail we introduce some notation. 
Let \(\bbP^n\) denote the \(n\)-dimensional projective space, and we denote its points by \([x_0\mathcolon x_1\mathcolon\cdots\mathcolon x_n]\) such that \((x_0,\dots,x_n)\neq(0,\dots,0)\) and \([x_0\mathcolon x_1\mathcolon\cdots\mathcolon x_n]=[cx_0\mathcolon cx_1\mathcolon\cdots\mathcolon cx_n]\) for any \(c\in\bbC\setminus0\).
For a real subvariety \(X\subseteq\bbP^n\), let \(I_X\) be its saturated homogeneous ideal in the polynomial ring \(S:=\bbR[x_0,x_1,\dots,x_n]\) and let \(R:=S/I_X\) denote its homogeneous coordinate ring.
We use \(R_d\) (resp.\ \(S_d\)) to denote the degree-\(d\) homogeneous part of \(R\) (resp.\ \(S\)), e.g., \(R_1\) for all linear forms on \(X\).
By re-embedding \(X\) through the Veronese maps \(\nu_d\) when needed, we focus on the convex cone \(\Sigma_X\) of sums of squares of linear forms on \(X\), defined as
\[
    \Sigma_X:=\{g\in R_2:g=\textstyle\sum_{i=1}^{k}l_i^2 \text{ for some }k\in\bbZ_{\ge 0} \text{ and }l_1,\dots,l_k\in R_1\}.
\]

For instance, the case of univariate polynomials of degree at most $d$ examined in \cite{legat2023low} corresponds to
the Veronese embedding $\nu_d(\bbP^1)\subset \bbP^d$ is the \emph{rational normal curve} parametrized by all homogeneous monomials of degree $d$ in two variables, concretely
\[ \nu_d(\bbP^1) = \{ [y_0^d \mathcolon y_0^{d-1}y_1 \mathcolon \ldots \mathcolon y_1^d] \in \bbP^d \colon [y_0 \mathcolon y_1] \in \bbP^1 \}. \]
The vanishing ideal $I_{\nu_d(\bbP^1)}$ is generated by the $2\times 2$ minors of the matrix
\[ \begin{pmatrix}
    x_0 & x_1 & \ldots & x_{d-1} \\
    x_1 & x_2 & \ldots & x_d
\end{pmatrix}.
\]
The quotient $R = S/ I_{\nu_d(\bbP^1)}$ is a graded ring. Its degree $1$ part $R_1$ is isomorphic to the vector space of homogeneous polynomials of degree $d$ in the variables $s$ and $t$ and, similarly, its degree $2$ part $R_2$ is isomorphic to the vector space of homogeneous polynomials of degree $2d$. In $R_2$, the cone $\Sigma_{\nu_d(\bbP^1)}$ is the cone of sums of squares of forms of degree $2d$.

Given a target quadratic form \(\bar{f}\in R_2\) we wish to find a best approximation for \(\bar{f}\) via sums of squares. 
To this end we fix a norm \(\nVert{\cdot}=\sqrt{\pAngle{\cdot}{\cdot}}\) induced by some inner product on \(R_2\) and consider the optimization problem
\begin{equation}
\label{eq:SOSConvexFormulation}
\min_{g}\left\{\left\|g-\bar{f}\right\|^2: g\in \Sigma_X\right\}.
\end{equation}
Problem~(\ref{eq:SOSConvexFormulation}) is a convex optimization problem with a unique minimizer and its optimal value is equal to zero precisely when $\bar{f}$ is a sum of squares.

Next, we define a nonconvex rank-$k$ reformulation of Problem~(\ref{eq:SOSConvexFormulation}). 
For an auxiliary integer $k$, define the sum-of-squares map $\sigma_k: R_1^k\rightarrow R_2$ which sends $\bdl:=(l_1,\dots, l_k)$ to $\sigma_k(\bdl):=\sum_{i=1}^{k}l_i^2$ and consider the problem
\begin{equation}\label{eq:SOSMinimizationProblem}
    \min_{\bdl}\{\nVert{\sigma_k(\bdl)-\bar{f}}^2:\bdl=(l_1,\dots,l_k)\in R_1^k\}.
\end{equation}
In the above example $X = \nu_d(\bbP^1)$, this is equivalent to the setup for the Burer-Monteiro approach in \cite{legat2023low} via homogenization and dehomogenization of the polynomials.
We refer readers to~\cite{cox2015ideals,blekherman2012semidefinite} for more details.

There are two natural questions for Problem~\eqref{eq:SOSMinimizationProblem}, which is, in general, not convex.
\begin{enumerate}
    \item As the efficiency critically depends on the value of \(k\), how small can we make it so that the optimal values of Problems~\eqref{eq:SOSConvexFormulation} and~\eqref{eq:SOSMinimizationProblem} coincide?
    \item Is there any target \(\bar{f}\) that leads to \emph{spurious local minima}, i.e., local minima of the objective function \(\nVert{\sigma_k(\cdot)-\bar{f}}^2\) that are not global minima? 
    How are they affected by the choice of \(k\)?
\end{enumerate}
The answer to the first question is known as the \emph{Pythagoras number} of \(X\), denoted as \(\Pythagoras(X)\), which is the smallest positive integer \(r\) such that any \(g\in\Sigma_X\) can be written as a sum of \(r\) squares.
Historically, the study of Pythagoras numbers was mostly focused on forms on \(\bbP^n\) and dates back to Hilbert~\cite{hilbert1888ueber}. His most famous result in this context is about ternary quartics, see~\cite{powers2004new,pfister2012elementary} for modern treatments and extensions.
Bounds on Pythagoras numbers for \(\nu_d(\bbP^n)\) (multivariate degree-\(2d\) forms) are studied and improved in~\cite{choi1995sums,scheiderer2017sum,blekherman2024pythagoras}. Thus in this work, we mostly focus on $k$ equal to \(\Pythagoras(X)\).

The second question is more closely related to the study of Burer-Monteiro methods.
From a practical perspective, first- and second-order optimality conditions, i.e., the gradient of the function \(\nVert{\sigma_k(\cdot)-\bar{f}}^2\) being zero and its Hessian matrix being positive semidefinite, are often used as more verifiable necessary conditions for local minimality.
We refer to the points that are not global minima but satisfy the first- and second-order optimality conditions for some target $\bar{f}$ as \emph{spurious second-order stationary points}.
In the geometric language of this work, the unexpectedly interesting result in~\cite{legat2023low} says that there are no spurious second-order stationary points, and consequently no spurious local minima, for the rank-2 formulation on the rational normal curve \(X=\nu_d(\bbP^1)\)
(see a more precise statement in Lemma~\ref{lemma:RationalNormalCurves}).
This motivates us to study the same question for other varieties \(X\), in terms of both spurious second-order stationary points and spurious local minima.

\subsection{Overview of main results}

In this work, we focus on (irreducible and non-degenerate) projective varieties $X\subseteq \bbP^n$ that are \emph{totally real}, i.e., the set of real points \(X(\bbR)\) is Zariski dense in \(X\).
We first consider \emph{varieties of minimal degree}, which are the totally real projective varieties satisfying any (and thus all) of the following equivalent defining properties.
\begin{itemize}
    \item Having the minimal possible degree $\deg(X)=\codim(X)+1$~\cite{eisenbud1987varieties}.
    \item Having the minimal possible Pythagoras numbers $\Pythagoras(X)=\dim(X)+1$~\cite{blekherman2019low,blekherman2021sums}.
    \item Having $\Sigma_X =\rmP_X$ where $\rmP_X$ denotes the set of quadratic forms that are nonnegative on $X(\bbR)$~\cite{blekherman2016sums}.
\end{itemize}
Our first result extends the theorem in~\cite{legat2023low} and shows that rational normal curves are the only interesting case of varieties of minimal degree with no spurious second-order stationary points.

\begin{theorem}\label{thm:ExistenceSpuriousMinima}
    If $X\subseteq \bbP^n$ is a smooth variety of minimal degree, then there are no spurious second-order stationary points for the nonconvex formulation~\eqref{eq:SOSMinimizationProblem} of rank $k=\dim(X)+1$ if and only if \(\dim(X)\in\{1,n-1,n\}\).
\end{theorem}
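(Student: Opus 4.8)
The plan is to combine the classification of smooth varieties of minimal degree with a concrete reformulation of second-order stationarity. A smooth variety of minimal degree is either $\bbP^n$ (so $\dim X=n$), a smooth quadric hypersurface ($\dim X=n-1$), the Veronese surface $\nu_2(\bbP^2)\subseteq\bbP^5$, or a smooth rational normal scroll $S(a_1,\dots,a_d)$ with all $a_i\ge 1$, which realizes $\dim X=d$ and $\codim X=n-d$ for each $1\le d\le n-1$. Thus $\dim(X)\in\{1,n-1,n\}$ singles out exactly the rational normal curves, the smooth quadrics, and $\bbP^n$; the complementary case consists of the Veronese surface together with the scrolls having $\dim X\ge 2$ \emph{and} $\codim X\ge 2$. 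Now fix $k=\dim(X)+1=\Pythagoras(X)$; at $\bdl=(l_1,\dots,l_k)\in R_1^k$ write $g=\sigma_k(\bdl)$, $e=g-\bar f$, $V=\linspan(l_1,\dots,l_k)$, and let $\tilde e$ be the symmetric bilinear form on $R_1$ with $\tilde e(u,v)=\pAngle{e}{uv}$. Computing the first two derivatives of $\nVert{\sigma_k(\cdot)-\bar f}^2$ yields: first-order stationarity $\iff V$ lies in the radical of $\tilde e$; at such a point $\pAngle{e}{g}=\sum_i\tilde e(l_i,l_i)=0$, and since $\sigma_k$ is then onto $\Sigma_X$ and the convex problem~\eqref{eq:SOSConvexFormulation} is optimal at $g$ exactly when additionally $e\in\Sigma_X^{*}$, the point is spurious $\iff e\notin\Sigma_X^{*}\iff\tilde e\not\succeq 0$; and second-order stationarity $\iff 2\nVert{\sum_i l_iv_i}^2+\sum_i\tilde e(v_i,v_i)\ge 0$ for all $(v_1,\dots,v_k)\in R_1^k$. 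Testing this inequality on the directions $(c_1w,\dots,c_kw)$ coming from a linear relation $\sum_i c_il_i=0$ forces $\tilde e\succeq 0$ once the $l_i$ are linearly dependent, so a spurious second-order stationary point has $\dim V=k$, whence $\tilde e$ descends to $R_1/V$, a space of dimension $\codim X$.

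Three of the families are then immediate. For $\dim X=n$ we have $\codim X=0$, so $\tilde e=0$ and nothing is spurious. For $\dim X=1$ the claim is Lemma~\ref{lemma:RationalNormalCurves}. For $\dim X=n-1$, let $X=Q$ be a smooth quadric with equation $q\in S_2$ of rank $n+1$ and $k=n$: at a hypothetical spurious point $\dim V=n$, so the $l_i$ have a single common zero $[p]\in\bbP^n$, and $\tilde e$ descends to the one-dimensional $R_1/V$ with $\tilde e\not\succeq 0$, hence $\tilde e=-\alpha\,\psi\otimes\psi$ for some $\alpha>0$ and a linear functional $\psi$ with kernel $V$. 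Being induced by an element of $R_2$, $\tilde e$ is orthogonal to $(I_X)_2=\langle q\rangle$, which is exactly $q(p)=0$, i.e.\ $[p]\in Q$; equivalently $q\in(l_1,\dots,l_n)$, so $q=\sum_i l_im_i$ with $m_i\in S_1$. Since $\rank q=n+1>n=\dim V$, not all $m_i$ lie in $V=\ker\psi$, so $\bdv=(m_1,\dots,m_n)$ satisfies $\sum_i l_im_i=q\equiv 0$ in $R_2$ while $\sum_i\psi(m_i)^2>0$; then the second-order form equals $0-\alpha\sum_i\psi(m_i)^2<0$, a contradiction. Hence no spurious second-order stationary point occurs when $\dim(X)\in\{1,n-1,n\}$.

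For the remaining varieties I build a spurious second-order stationary point directly. Fix a real point $x_0\in X$, let $\psi\in R_1^{*}$ be evaluation at $x_0$ and $H=\ker\psi$ the linear forms vanishing at $x_0$, and suppose one can choose linearly independent $l_1,\dots,l_k\in H$ whose entire degree-one syzygy module (all $\bdv$ with $\sum_i l_iv_i=0$ in $R_2$) lies in $H^k$. Take $\tilde e=-\alpha\,\psi\otimes\psi$ with $\alpha>0$ small; it is induced by some $e\in R_2$ because $\psi$ is evaluation at a point of $X=V((I_X)_2)$, so $\tilde e\perp(I_X)_2$. With $g=\sigma_k(\bdl)$ and $\bar f=g-e$, the point $\bdl$ is first-order stationary ($V=\linspan(l_i)\subseteq H$, which is the radical of $\tilde e$) and spurious ($\tilde e\not\succeq 0$); for second-order stationarity one splits $\bdv=\bds+\bdt$ into a syzygy part $\bds$ and a transverse part $\bdt$, uses $\sum_i l_iv_i=\sum_i l_it_i$ and $\psi(v_i)=\psi(t_i)$ (since each $s_i\in H=\ker\psi$), and bounds the second-order form $2\nVert{\sum_i l_it_i}^2-\alpha\sum_i\psi(t_i)^2$ below by $(2\mu-\alpha\nVert{\psi}^2)\nVert{\bdt}^2\ge 0$, where $\nVert{\sum_i l_it_i}^2\ge\mu\nVert{\bdt}^2$ on the orthocomplement of the syzygies. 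For the Veronese surface one may take $x_0=\nu_2([0:0:1])$ and the $l_i$ to be three independent binary quadratics (no $z$), for which the $z^2$-coefficient of any syzygy relation vanishes; for a scroll one makes an analogous choice adapted to a ruling $\bbP^{d-1}$ through $x_0$. In codimension $1$ (the quadric) or dimension $1$ (the rational normal curve) no such $l_i$ exist, which is why the equivalence is with $\dim X\ge 2$ and $\codim X\ge 2$.

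The main obstacle is this last step for general scrolls: proving that, precisely when $\dim X\ge 2$ and $\codim X\ge 2$, there exist independent $l_1,\dots,l_k$ in $H=\ker(\operatorname{ev}_{x_0})$ whose full degree-one syzygy module lies in $H^k$. This is a statement about the first syzygies of the linear series $R_1\subseteq H^0(\calO_X(1))$ relative to $x_0$, and should follow from the Eagon--Northcott/Pl\"{u}cker relations of the $2\times 2$-minor presentation of $I_X$; verifying it uniformly over the classification is the technical crux. The $\bbP^n$, quadric, and rational normal curve cases, by contrast, are essentially formal, the quadric one hinging only on the single inequality $\rank q>k$.
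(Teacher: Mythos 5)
Most of your proposal tracks the paper closely. Your reduction to the bilinear form $\tilde e(u,v)=\pAngle{e}{uv}$ is the content of Lemma~\ref{lemma:OptimalityConditions}, Lemma~\ref{lemma:NormalConesCharacterization}, Theorem~\ref{thm:CharacterizationSpuriousMinima} and Corollary~\ref{cor:LinearDependenceNotLocalMinimum}; the cases $\dim X=n$ and $\dim X=1$ are handled as in the paper; and your quadric argument is the paper's, just run in contrapositive (the paper writes $g=al_0^2$ and shows the syzygy $Q=\sum l_im_i$, with some $m_i\notin V$ forced by $\rank Q=n+1$, gives $a\ge 0$). Your Veronese construction is exactly Example~\ref{ex:TernaryQuarticsBoundaryPoint}: $\bdl=(x_0^2,x_0x_1,x_1^2)$ with $g=-\epsilon x_2^4$, which is your $-\alpha\,\psi\otimes\psi$ for $\psi=\operatorname{ev}_{[0:0:1]}$.

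The gap is the one you flag yourself, and it is not a routine verification: for scrolls with $\dim X\ge 2$ and $\codim X\ge 2$ you never produce $m+1$ independent forms vanishing at a real point $x_0$ whose entire syzygy module lies in $\ker(\operatorname{ev}_{x_0})^{m+1}$, and the natural candidates fail. For the $(2,2)$-scroll, taking $\bdl=(y_0^2x_2,y_0y_1x_2,y_1^2x_2)$ forces $x_2(x_0)=0$, but the determinantal syzygies $(y_0y_1x_1,-y_0^2x_1,0)$ and $(y_1^2x_1,-y_0y_1x_1,0)$ have components that cannot all vanish at any such point; permuting which monomials you put into $\bdl$ runs into the same obstruction. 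The paper does not repair this — it abandons the point-evaluation mechanism entirely. In Example~\ref{ex:2DimScrollSpuriousMinima} and Proposition~\ref{prop:ScrollSpuriousMinima}, $\bdl$ consists of all $n-n_1$ monomials $y_0^iy_1^{n_j-i}x_j$ with $j\ge 2$ (so $\bdl$ vanishes on a whole ruling, not at a point, and $R/\sAngle{\bdl}$ is the coordinate ring of a degree-$n_1$ rational normal curve $X'$), and $g$ is a functional on binary forms of degree $2n_1$ that is nonnegative on the squares of syzygies yet not in $\Sigma_{X'}^*$, because the extreme ray $y_0^{2n_1}x_1^2$ is unreachable by syzygy squares; concretely $g=x_2^2(y_0^4+y_1^4-\tfrac13y_0^2y_1^2)$ in the $(2,2)$ case, which has ``rank'' $3$ as a quadratic functional and is not any $-\alpha\,\psi\otimes\psi$. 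The statement at rank $k=m+1$ is then obtained by truncating via Corollary~\ref{cor:NoSpuriousMinimaWithMoreSquares}, since a prefix of a spurious tuple is spurious. So the technical crux you defer is precisely where a different idea is needed, and as written your uniform construction for scrolls does not go through.
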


In particular, this result implies the existence of spurious second-order stationary points for low-rank approximation problems for univariate real symmetric matrix polynomials.
More explicitly, given an \(m\times m\) symmetric matrix \(Q(y)=(q_{ij}(y))_{i,j=1,\dots,m}\) with entries \(q_{ij}(y)\in\bbR[y]\) being univariate polynomials, one may seek a factorization
\begin{equation*}
    Q(y)=L(y)L(y)^\transpose,\quad\text{ for some }L(y)\in(\bbR[y])^{m\times k}
\end{equation*}
to certify its positive semidefiniteness for any real value of \(y\). 
Such factorization problem is equivalent to finding a sum-of-square representation of a nonnegative quadratic form on an \(m\)-dimensional \emph{rational normal scroll} and it is known that \(k=m+1\) always suffices whenever such representation exists, see \cite[Section 2]{blekherman2019low}. 
In the case of $k = m+1 = 3$, we give explicit examples of spurious stationary points of Problem~\eqref{eq:SOSMinimizationProblem} on some rational normal scrolls, see Example~\ref{ex:2DimScrollSpuriousMinima} and Proposition~\ref{prop:ScrollSpuriousMinima}.

From an algorithmic point of view (see Problem~\eqref{eq:SOSRelaxationProblem} and Algorithm~\ref{alg:AvoidSpuriousStationaryPoints} below), it is often helpful to distinguish spurious stationary points or minima \emph{in the interior}, i.e., tuples \(\bdl\) such that \(\sigma_k(\bdl)\) is an interior point of \(\Sigma_X\), from those \emph{on the boundary}, i.e., tuples \(\bdl\) such that \(\sigma_k(\bdl)\) is a boundary point of \(\Sigma_X\).
Our second result is regarding spurious local minima in the interior over surfaces of minimal degree, which all have Pythagoras number $3$.

\begin{theorem}\label{thm:SurfacesOfMinimalDegree}
    Suppose \(X\subseteq\bbP^n\) is a surface of minimal degree.
    If \(\sigma_k(\bdl)\) defines a reduced subscheme on \(X\), then \(\bdl\) cannot be a spurious local minimum for the nonconvex formulation~\eqref{eq:SOSMinimizationProblem} of rank $3$ in the interior.
\end{theorem}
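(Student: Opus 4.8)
The plan is to prove the stronger statement that every $\bdl$ in the interior with $\sigma_3(\bdl)$ cutting out a reduced subscheme and satisfying the first- and second-order optimality conditions is already a global minimizer. Write $g:=\sigma_3(\bdl)\in R_2$ and let $r:=g-\bar f$ be the residual. Since $D\sigma_3(\bdl)$ sends $\bdh=(h_1,h_2,h_3)$ to $2\sum_i l_i h_i$, the first-order condition says exactly that $r$ is orthogonal to the image $V_{\bdl}:=\sum_{i=1}^3 l_i R_1\subseteq R_2$ of $D\sigma_3(\bdl)$; in particular $\pAngle{r}{g}=0$, as $g=\sum_i l_i^2\in V_{\bdl}$. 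Taylor-expanding $\nVert{\sigma_3(\cdot)-\bar f}^2$ at a first-order stationary point shows that its Hessian is, up to a positive scalar, the quadratic form $\bdh\mapsto \nVert{\sum_i l_i h_i}^2+\tfrac12\pAngle{r}{\sigma_3(\bdh)}$ on $R_1^3$, so second-order stationarity is that this form be positive semidefinite. The goal is then to deduce $r=0$.

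\textbf{The two easy cases.} The cokernel of $D\sigma_3(\bdl)$ is $(R/(l_1,l_2,l_3))_2$. Because $X$ is arithmetically Cohen--Macaulay of degree $\codim X+1$, hence has $h$-vector $(1,\codim X)$, a Hilbert-function count gives $\dim R_1^3-\dim R_2=3$, so $\dim (R/(l_1,l_2,l_3))_2$ equals the number of linear syzygies of $(l_1,l_2,l_3)$ in excess of the three Koszul ones. In particular, if $l_1,l_2,l_3$ form a regular sequence --- equivalently, they have no common zero on $X$ --- then $R/(l_1,l_2,l_3)$ is the Artinian reduction of $R$ and vanishes in degree $2$, so $D\sigma_3(\bdl)$ is surjective; then the first-order condition reads $r\perp R_2$ and $r=0$. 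The second easy case is that $l_1,l_2,l_3$ are linearly dependent, say $\sum_i\alpha_i l_i=0$: then $(\alpha_1 m,\alpha_2 m,\alpha_3 m)$ is a linear syzygy for every $m\in R_1$, so inserting it into the Hessian (which kills the first term) gives $(\sum_i\alpha_i^2)\pAngle{r}{m^2}\ge 0$, i.e.\ $\pAngle{r}{m^2}\ge 0$ for all $m\in R_1$; thus $r$ lies in the dual cone $\Sigma_X^{\vee}$. Since $g$ is an interior point of $\Sigma_X=\rmP_X$ and $\pAngle{r}{g}=0$, convex duality forces $r=0$.

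\textbf{The main obstacle.} What remains is the case in which $l_1,l_2,l_3$ are linearly independent yet have a common zero on $X$, producing extra (non-Koszul) linear syzygies, so that $D\sigma_3(\bdl)$ fails to be surjective. Here the hypotheses must be used in earnest. Interiority of $g$ gives $g>0$ on $X(\bbR)$, so the common-zero scheme $W:=V(l_1,l_2,l_3)\cap X$ has no real points; reducedness of $Z:=V(g)\cap X$ forces $W$ to be zero-dimensional, since if $l_1,l_2,l_3$ vanished along a curve in $X$ --- in particular along a ruling of a scroll or a line through a cone point --- then $g$ would vanish to order at least two there and $Z$ would be non-reduced. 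The plan is then to run through the classification of surfaces of minimal degree (the plane, quadrics, rational normal scrolls, cones over rational normal curves, and the Veronese surface) and, using their explicit ideals and minimal free resolutions, to analyze the syzygy module of $(l_1,l_2,l_3)$: in each case one aims to show that the reduced, real-point-free structure of $W$ either already makes $D\sigma_3(\bdl)$ surjective (so the first-order argument applies) or makes the $\sigma_3$-images of the linear syzygies, together with $V_{\bdl}$, span all of $R_2$ while leaving $g$ in the relative interior of the cone they generate --- so that $r\perp V_{\bdl}$ together with $\pAngle{r}{\sigma_3(\bdh)}\ge 0$ over linear syzygies $\bdh$ again yields $r\in\Sigma_X^{\vee}$ and hence $r=0$. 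Converting "the quadric section $Z$ is reduced'' into this concrete control of the linear syzygies on each family of surfaces, leaning on arithmetic Cohen--Macaulayness and the Hilbert--Burch structure of codimension-two ideals, is the step I expect to be the real work.
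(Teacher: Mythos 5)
Your two preliminary cases are correct (no common zero on \(X\) forces \(\sAngle{\bdl}_2=R_2\) by arithmetic Cohen--Macaulayness, and linear dependence puts the residual \(r\) into \(\Sigma_X^*\), which kills it against an interior point; cf.\ Corollary~\ref{cor:LinearDependenceNotLocalMinimum}), but they are not where the theorem lives. The entire content is the case you label ``the main obstacle,'' where \(l_1,l_2,l_3\) are independent and share a zero-dimensional, real-point-free common zero locus; there you only state a plan (``run through the classification \dots\ analyze the syzygy module \dots\ the step I expect to be the real work'') and do not execute it, so the proof is incomplete. Moreover, the plan targets a strictly stronger conclusion than the statement: you aim to show that every \emph{second-order stationary} point in the interior with reduced divisor is global, i.e.\ that the forms \(\sigma_3(\bdh)\) over \(\bdh\in\syzygy_1(\bdl)\), together with \(\sAngle{\bdl}_2\), force \(r\in\Sigma_X^*\). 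That stronger statement is proved in the paper for the Veronese surface (Theorem~\ref{thm:VeroneseSurfaceMinima}), but for rational normal scrolls the existence of interior spurious second-order stationary points is explicitly left open (Table~\ref{tab:VarietiesOfMinimalDegree}), and your sketch gives no indication of how the syzygy analysis would settle it. The hypothesis that \(\bdl\) is a local \emph{minimum}, not merely second-order stationary, is essential and is never exploited in your proposal beyond second order.

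The paper's proof runs on a different mechanism entirely. Lemma~\ref{lemma:FiniteGramNotSpuriousMinimum} shows that if \(\sigma_3(\bdl)\) has only finitely many Gram matrices in \(\calG_3\), then the quotient map restricted to that locus is strongly open, so every neighborhood of \(\bdl\) maps onto a neighborhood of \(\sigma_3(\bdl)\) inside \(\Sigma_X\) and \(\bdl\) cannot be a spurious local minimum. One is thus reduced to showing that an interior form with reduced divisor has finitely many Gram matrices: infinitely many Gram matrices must arise from linear series with common zeros; such common zeros are singular points of the divisor \(V(\sigma_3(\bdl))\), of which there are only finitely many by reducedness, so infinitely many Gram matrices share a conjugate pair \(\{p,\bar{p}\}\) of base points; projecting away from \(\linspan\{p,\bar{p}\}\) preserves minimal degree, interiority, reducedness, and the infinitude of Gram matrices, and iterating terminates at \(\bbP^2\) (unique Gram matrix) or a quadric in \(\bbP^3\) (the Gram fiber is a line, which carries infinitely many PSD matrices only by meeting a face of the PSD cone, forcing the form onto the boundary) --- a contradiction. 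To salvage your approach you would either have to carry out the scroll-by-scroll syzygy computation, which amounts to resolving a question the paper leaves open, or find a way to inject the local-minimality hypothesis into your framework, e.g.\ via Lemma~\ref{lemma:LocalMinimumSufficientCondition} or the Gram-counting argument above.
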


While Theorem~\ref{thm:SurfacesOfMinimalDegree} excludes many points from being spurious local minima on surfaces of minimal degree, a complete analysis of the remaining non-reduced cases can be challenging.
Nevertheless, our third result shows that there is no spurious local minima for the Veronese surface $X=\nu_2(\bbP^2)$ when $k=\Pythagoras(X)=3$.
Moreover, we classify all spurious second-order stationary points whose sum of squares is associated with infinitely many Gram matrices.
This partially extends the previous work by Scheiderer on ternary quartics~\cite{scheiderer2017sum}. 
\begin{theorem}\label{thm:VeroneseSurfaceMinima}
    When \(X=\nu_2(\bbP^2)\subseteq\bbP^5\) is the Veronese surface,
    \begin{itemize}
        \item there is no spurious second-order stationary point of~\eqref{eq:SOSMinimizationProblem} in the interior for any $k\ge3$;
        \item when $k=3$, there is no spurious local minimum of~\eqref{eq:SOSMinimizationProblem}; in particular, any spurious second-order stationary points with infinitely many Gram representations must lie on the boundary, and correspond to a binary quartic form on \(\bbP^2\) (up to a projective change of coordinates).
    \end{itemize}
\end{theorem}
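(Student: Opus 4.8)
The plan is to work throughout with the first- and second-order optimality conditions at a tuple $\bdl=(l_1,\dots,l_k)\in R_1^k$, writing $g:=\sigma_k(\bdl)$ and recalling that $R_1$ consists of ternary quadratic forms and $R_2$ of ternary quartics. Since $D\sigma_k(\bdl)$ sends $(h_1,\dots,h_k)$ to $2\sum_i l_ih_i$, its image is the degree-$2$ piece $I(\bdl)_2:=\sum_i l_iR_1\subseteq R_2$, so stationarity means $g-\bar f\perp I(\bdl)_2$; taking $(h_i)=\bdl$ gives $\pAngle{g-\bar f}{g}=0$, hence $g$ is the global minimizer of~\eqref{eq:SOSConvexFormulation} as soon as $\pAngle{g-\bar f}{m^2}\ge0$ for all $m\in R_1$. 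The Hessian on $(h_i)$ is $2\nVert{\sum_i l_ih_i}^2+\pAngle{g-\bar f}{\sum_i h_i^2}$, so at a second-order stationary point $\pAngle{g-\bar f}{\sum_i h_i^2}\ge0$ along every syzygy $\sum_i l_ih_i=0$, and the \emph{flat} directions are exactly the syzygies with $\pAngle{g-\bar f}{\sum_i h_i^2}=0$. Along a curve $\bdl(t)=\bdl+tv+t^2w$ with $v$ flat, the objective $\nVert{\sigma_k(\cdot)-\bar f}^2$ changes by $4t^3\pAngle{g-\bar f}{\sum_iv_iw_i}+O(t^4)$, so $\bdl$ fails to be a local minimum unless $g-\bar f\perp\sum_i v_iR_1$ for every flat $v$, in which case the $t^4$-coefficient equals $\min_w\bigl(\nVert{\textstyle\sum_iv_i^2+2\sum_il_iw_i}^2+2\pAngle{g-\bar f}{\textstyle\sum_iw_i^2}\bigr)$. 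The whole proof is organized around these two levers — globality from $\pAngle{g-\bar f}{m^2}\ge0$, and the cubic/quartic perturbation test along flat syzygy directions.

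For the first bullet, suppose $g$ is positive definite, so $l_1,\dots,l_k$ have no common real zero. Rotating by $O(k)$ we may concentrate $\bdl$ into $d:=\dim\linspan\{l_1,\dots,l_k\}$ coordinates; if $k>d$ the vanishing coordinates make $(0,\dots,h,\dots,0)$ a syzygy for every $h\in R_1$, so $\pAngle{g-\bar f}{m^2}\ge0$ for all $m$ and $g$ is global, while if $d=6$ then $\linspan\{l_i\}=R_1$ forces every $m^2\in I(\bdl)_2$ and $g=\bar f$. This reduces matters to $k=d\in\{3,4,5\}$ with the $l_i$ linearly independent conics, no common real zero, and $I(\bdl)_2\subsetneq R_2$. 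For each such linear system of conics the degree-$2$ syzygies are classically controlled — by Hilbert–Burch when the base locus is finite, and directly in the remaining cases — and one shows they force $\pAngle{g-\bar f}{m^2}\ge0$ for all $m$, the real-base-point-free hypothesis being precisely what excludes the degenerate systems for which this would fail; for the reduced $k=3$ subcase one may instead invoke Theorem~\ref{thm:SurfacesOfMinimalDegree}. Hence there is no spurious second-order stationary point in the interior for any $k\ge3$.

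Now fix $k=3$. A spurious second-order stationary point cannot be interior by the first bullet, which already gives the ``must lie on the boundary'' clause. Moreover, the $O(3)$-reduction above shows that for a spurious point $\dim\linspan\{l_1,l_2,l_3\}=3$ (if it were $\le2$ a vanishing coordinate would make $g$ global). Having infinitely many rank-$\le3$ Gram representations of $g$ means the Gram spectrahedron of $g$ is positive-dimensional, which translates, via the syzygies of a representing triple with $3$-dimensional span $W\subseteq R_1$, into a failure of genericity of the multiplication map $\Symmetric^2 W\to R_2$; a classification of the spans $W$ for which this happens, combined with stationarity, forces $W$ to be, up to a projective change of coordinates, the system of conics in two of the three variables, so that $g\in\Symmetric^2W$ is a binary quartic. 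Such a $g$ vanishes at a real point (namely $[0\mathcolon0\mathcolon1]$ in suitable coordinates), hence lies on the boundary. Finally, that no $\bdl$ is a spurious local minimum follows by combining the interior case with the perturbation test on the boundary: running over the possible boundary zero-scheme types one produces, whenever $g$ is not already the global minimizer, a flat direction $v$ together with corrections making the cubic or quartic term negative; in the binary-quartic case this is the explicit computation that, after $O(3)$-reduction, the stationarity conditions confine $g-\bar f$ to a subspace on which $g$ is already global, and otherwise a strictly decreasing curve is produced.

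Once the optimality conditions have been recast through syzygies, the interior case is essentially a finite case check over small linear systems of conics with positive-definiteness as the lever, which I expect to be routine though lengthy. The main obstacle is the $k=3$ boundary analysis: there $\{g=0\}\subseteq\bbP^2$ is singular and often non-reduced, first-order perturbations of $\bdl$ degenerate, and one has to control second- and higher-order data — the syzygy module together with the Gram spectrahedron — both to pin down the binary-quartic classification and to construct the descent curves excluding spurious local minima. Getting a statement uniform over all boundary zero-scheme types is the crux, and it is here that the analysis meets Scheiderer's description of three-square decompositions of ternary quartics.
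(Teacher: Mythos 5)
Your framework---recasting the optimality conditions through syzygies, reducing the interior case, and analyzing the boundary via Gram spectrahedra and binary quartics---parallels the paper's, but the proposal defers exactly the steps that carry the weight of the proof, and one of its explicit reductions is wrong. For the interior $k=3$ case you say the reduced subcase "may instead invoke Theorem~\ref{thm:SurfacesOfMinimalDegree}"; that theorem excludes spurious \emph{local minima}, not spurious \emph{second-order stationary points}, and the distinction is essential here (Example~\ref{ex:TernaryQuarticsBoundaryPoint} exhibits, on this very variety, a second-order stationary point that is not a local minimum). What the first bullet actually requires is showing that for \emph{every} $g\perp\sAngle{\bdl}_2$ with $g\notin\Sigma_X^*$ the quadratic form $\bdh\mapsto\pAngle{g}{\sigma_3(\bdh)}$ fails the second-order test on $\syzygy_1(\bdl)$. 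The paper does this via Lemma~\ref{lemma:VeroneseZerosStructure}: the base locus of three independent conics with no common real zero is a \emph{reduced conjugate pair} $\{p,\bar p\}$ (a small-scheme/length argument), the degree-two part of $\sAngle{\bdl}$ is already saturated (so $g$ is forced to be a real combination of $\mathrm{re}$ and $\mathrm{im}$ of evaluation at $p$), and the evaluation map $\bdh\mapsto(h_1(p),h_2(p),h_3(p))$ on syzygies has one-dimensional image; from this one computes $\pAngle{g}{\sigma_3(\bdh)}=b_1\mathrm{re}(Ch_3(p)^2)+b_2\mathrm{im}(Ch_3(p)^2)$ with $C=1+c_1^2+c_2^2$ and concludes indefiniteness (or, if $C=0$, failure of the strict equality clause). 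Your "classically controlled by Hilbert--Burch" does not substitute for this. Likewise your reduction to $k=d$ via $O(k)$-rotation only disposes of linearly dependent tuples; the genuine cases $k=4,5$ with independent forms need the hyperplane-avoidance/dimension-count induction of Lemma~\ref{lemma:InteriorSpuriousLocalMinimaMoreSquares}, not a direct syzygy computation you have not performed.

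For the second bullet the gaps are similar in kind. The step "infinitely many Gram representations forces $W$ to be the conics in two variables" is the content of Proposition~\ref{prop:VeroneseSurfaceMinimaBinary}, proved in the paper by iterated projection away from the common zeros (first ruling out two real base points via a smooth quadric surface, then analyzing a single real base point and, if needed, a real tangent direction); asserting "a classification of the spans $W$" is not a proof. And your plan to exclude spurious local minima by "running over the possible boundary zero-scheme types" and constructing descent curves for each is much harder than what the paper does: the paper first invokes Lemma~\ref{lemma:FiniteGramNotSpuriousMinimum} (finitely many Gram matrices implies not a spurious local minimum, via strong openness of the quotient map), which reduces to the infinitely-many-Gram case, hence to binary quartics, where the unique candidate target $\bar f=\sigma_3(\bdl)-\epsilon x_2^4$ is killed by one explicit quartic-order descent curve. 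You yourself flag the uniform boundary analysis as "the crux"; as written, the proposal does not close it.
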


We summarize Theorems~\ref{thm:ExistenceSpuriousMinima}, \ref{thm:SurfacesOfMinimalDegree}, and~\ref{thm:VeroneseSurfaceMinima} in Table~\ref{tab:VarietiesOfMinimalDegree}.
\begin{table}[htbp]
    \centering
    \begin{tabular}{ccccc}
    \hline
    & \multicolumn{2}{c}{boundary} & \multicolumn{2}{c}{interior} \\
    \cline{2-5}
    Variety of minimal degree & \makecell{spurious\\ local minima} & \makecell{spurious 2nd-order\\ stationary points} & \makecell{spurious\\ local minima} & \makecell{spurious 2nd-order\\ stationary points}\\
    \hline
    rational normal curves & \xmark & \xmark & \xmark & \xmark\\
    quadric hypersurfaces or $\bbP^n$ & \xmark & \xmark & \xmark & \xmark\\
    Veronese surface $\nu_2(\bbP^2)$ & \xmark & \cmark & \xmark & \xmark\\
    rational normal scrolls & \cmark & \cmark & \qmark & \qmark\\
    \hline
    \end{tabular}
    \caption{Existence of spurious local minima and second-order stationary points on varieties of minimal degree}
    \label{tab:VarietiesOfMinimalDegree}
\end{table}
The basic technical tool used to prove these results is the following intrinsic characterization of spurious local minima on varieties, which relates to the quadratic part of the ideal \(\sAngle{\bdl}_2=\{\sum_{i=1}^{k}l_ih_i:h_1,\dots,h_k\in R_1\}\), and the linear part of the syzygies \(\syzygy_1(\bdl):=\{(h_1,\dots,h_k)\in R_1^k:\sum_{i=1}^{k}l_ih_i=0\}\) of a given tuple \(\bdl\in R_1^k\),
corresponding to \(\sAngle{\bdl}_2 = \image{\diff_\bdl\sigma_k}\) and \(\syzygy_1(\bdl)=\kernel{\diff_\bdl\sigma_k}\), the image and the kernel of the differential of the sum-of-squares map at $\bdl$, respectively. 
Fix an inner product on $R_2$; we use $\sAngle{\bdl}_2^\perp$ to denote the orthogonal subspace of $\sAngle{\bdl}_2$ (or we write $g\perp\sAngle{\bdl}_2$ for any $g\in\sAngle{\bdl}_2^\perp$), and $\Sigma_X^*$ to denote the convex dual cone of $\Sigma_X$.
The following result holds for arbitrary varieties $X\subset \bbP^n$ and any inner product on $R_2$.
\begin{theorem}\label{thm:CharacterizationSpuriousMinima}
    Fix a $k$-tuple \(\bdl=(l_1,\dots l_k)\in R_1^k\).
    The following conditions are equivalent for the nonconvex formulation~\eqref{eq:SOSMinimizationProblem} of rank $k$.
    \begin{enumerate}
        \item There exists a target \(\bar{f}\in R_2\) such that \(\bdl\) is a spurious second-order stationary point.
        \item There exists \(g\in R_2\) such that \(g\perp\sAngle{\bdl}_2\), \(g\notin\Sigma_X^*\), and for any \(\bdh\in\syzygy_1(\bdl)\), \(\pAngle{g}{\sigma_k(\bdh)}\ge0\); equality can only hold here for $\bdh \in \syzygy_1(\bdl)$ if \(g\perp\sAngle{\bdh}_2\).
    \end{enumerate}
    Moreover, when the ideal \(\sAngle{\bdl}\subset R\) is real radical, 
    both conditions~\textnormal{(i)} and \textnormal{(ii)} are equivalent to the following:
    \begin{enumerate}
        \item[(iii)] There exists a target \(\bar{f}\in R_2\) such that \(\bdl\) is a spurious local minimum.
    \end{enumerate}
\end{theorem}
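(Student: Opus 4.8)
The plan is to reduce the statement to a concrete condition about the image and kernel of the differential $\diff_\bdl\sigma_k$, then translate that condition into the language of the dual cone $\Sigma_X^*$ and syzygies. The first step is to write down the gradient and Hessian of the objective $F(\bdl)=\nVert{\sigma_k(\bdl)-\bar{f}}^2$ explicitly. Using the chain rule, $\nabla F(\bdl) = 2(\diff_\bdl\sigma_k)^*(\sigma_k(\bdl)-\bar{f})$, so the first-order condition says $\sigma_k(\bdl)-\bar{f}$ is orthogonal to $\image\diff_\bdl\sigma_k = \sAngle{\bdl}_2$. Setting $g := \bar f - \sigma_k(\bdl)$ (the ``dual slack''), this is precisely $g\perp\sAngle{\bdl}_2$. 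The Hessian at a first-order stationary point splits, as usual for least-squares-type objectives, into a Gauss–Newton part $2(\diff_\bdl\sigma_k)^*\diff_\bdl\sigma_k \succeq 0$ and a curvature correction involving the second derivative of $\sigma_k$ contracted against $g$; one computes that the bilinear form of $\diff^2\sigma_k$ on a pair $(\bdh,\bdh)$ is $2\sigma_k(\bdh)$ (since $\sigma_k$ is a sum of squares, hence quadratic in $\bdl$). So positive semidefiniteness of the Hessian on a direction $\bdh$ reads $\nVert{\diff_\bdl\sigma_k(\bdh)}^2 - \pAngle{g}{\sigma_k(\bdh)}\ge 0$.

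The next step is to observe that the Gauss–Newton term already dominates whenever $\bdh\notin\kernel\diff_\bdl\sigma_k=\syzygy_1(\bdl)$, at least after rescaling—but it does \emph{not} dominate on the kernel, where $\diff_\bdl\sigma_k(\bdh)=0$. Hence, away from trivial directions, the binding constraints of second-order stationarity are exactly $\pAngle{g}{\sigma_k(\bdh)}\le 0$ for $\bdh\in\syzygy_1(\bdl)$; wait—the sign. We want the Hessian $\succeq 0$, i.e. $-\pAngle{g}{\sigma_k(\bdh)}\ge 0$, i.e. $\pAngle{g}{\sigma_k(\bdh)}\le 0$ on syzygies, but the theorem statement has $\ge 0$. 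The resolution is the sign convention for $g$: with $g := \sigma_k(\bdl)-\bar f$ (opposite choice) the gradient condition is still $g\perp\sAngle{\bdl}_2$, and the Hessian condition becomes $\pAngle{g}{\sigma_k(\bdh)}\ge 0$ for all $\bdh\in\syzygy_1(\bdl)$, matching (ii). The equality-case refinement—equality forces $g\perp\sAngle{\bdh}_2$—is the \emph{strict} second-order condition disguised: if $\pAngle{g}{\sigma_k(\bdh_0)}=0$ for some nonzero syzygy $\bdh_0$, then $\bdh_0$ is a zero-Hessian direction, and by perturbing $\bdl$ along $\bdh_0$ and analyzing the third-order term (which involves $\pAngle{g}{\cdot}$ paired with mixed derivatives, ultimately $\sAngle{\bdh_0}_2$) one sees that to avoid descending one needs $g\perp\sAngle{\bdh_0}_2$; conversely that orthogonality makes the objective locally flat enough in that direction that stationarity is preserved. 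I would spell this out by a careful Taylor expansion of $F(\bdl+t\bdh_0)$ to third and fourth order, using $\sigma_k(\bdl+t\bdh_0)=\sigma_k(\bdl)+2t\,(\text{bilinear})+t^2\sigma_k(\bdh_0)$ which is \emph{exact} since $\sigma_k$ is quadratic—this is the crucial simplification that makes the whole equality analysis finite.

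**The spuriousness condition**, namely $g\notin\Sigma_X^*$, comes from duality: $\bdl$ is a \emph{global} minimum of $F$ iff $\sigma_k(\bdl)$ is the metric projection of $\bar f$ onto $\Sigma_X$, which by the standard projection/KKT characterization for the convex problem~\eqref{eq:SOSConvexFormulation} happens iff $g=\sigma_k(\bdl)-\bar f$ satisfies $g\perp\sAngle{\bdl}_2$ \emph{and} $\pAngle{g}{p}\le 0$ for all $p\in\Sigma_X$ with appropriate complementary slackness—equivalently $g\in -\Sigma_X^*$ together with $\pAngle{g}{\sigma_k(\bdl)}=0$ (the latter being automatic from $\sigma_k(\bdl)\in\sAngle{\bdl}_2\perp g$). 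Actually for our sign convention $g\in \Sigma_X^*$ is the condition for a global min; hence ``not a global min'' contributes $g\notin\Sigma_X^*$ after noting that the other KKT conditions are subsumed by first-order stationarity. Putting the three pieces together—$g\perp\sAngle\bdl_2$ from the gradient, the syzygy inequalities from the Hessian, and $g\notin\Sigma_X^*$ from non-globality—gives the equivalence (i)$\Leftrightarrow$(ii), running the argument in both directions: given such a $g$, set $\bar f := \sigma_k(\bdl)-g$.

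**The main obstacle** I anticipate is the equivalence with (iii) under the real-radical hypothesis—passing from ``second-order stationary point'' to genuine ``local minimum.'' In general a second-order stationary point need not be a local minimum (the Hessian can be degenerate with the objective descending at higher order), so one needs that real-radicality of $\sAngle\bdl$ forces enough rigidity. The idea is that when $\sAngle\bdl$ is real radical, the fiber $\sigma_k^{-1}(\sigma_k(\bdl))$ is smooth at $\bdl$ (it is, up to the orthogonal group action on the Gram factorization, cut out transversally), so that $\syzygy_1(\bdl)$ is exactly the tangent space to the fiber and no ``hidden'' higher-order descent directions exist beyond those controlled by the equality case already analyzed; one must verify that along every arc tangent to a zero-Hessian syzygy direction the objective stays $\ge F(\bdl)$ locally, which, because $\sigma_k$ is quadratic, reduces to checking that the quadratic form $t\mapsto F(\bdl+t\bdh_0) - F(\bdl) = t^2(\nVert{\sigma_k(\bdh_0)}^2\text{-type terms}) - t^2\pAngle{g}{\sigma_k(\bdh_0)} + \cdots$ has no negative leading behavior—and the equality-case clause of (ii) is precisely what guarantees this. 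The real-radical hypothesis enters to rule out the pathology where $\bdl$ can be connected within the fiber to a boundary stratum; I would handle it by invoking that a real radical $\sAngle\bdl$ makes the scheme-theoretic fiber reduced, so the local analytic structure of $\sigma_k^{-1}(\sigma_k(\bdl))$ near $\bdl$ is a manifold, and then a compactness/continuity argument on a small sphere around $\bdl$ upgrades ``no descent to second order in any direction, with the equality directions genuinely flat'' to ``strict local minimality in the quotient,'' hence $\bdl$ is a local minimum, which is spurious by $g\notin\Sigma_X^*$.
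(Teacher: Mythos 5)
Your outline of the equivalence (i)$\Leftrightarrow$(ii) follows the paper's route (gradient/Hessian computation, Gauss--Newton splitting, and the normal-cone characterization of global minimality), but two steps are handled incorrectly or not at all. First, the equality clause in (ii) is not a ``third-order condition in disguise'': second-order stationarity says nothing about third-order terms, so your proposed derivation does not prove (i)$\Rightarrow$(ii). The correct argument is purely second order: if \(\bdh\in\syzygy_1(\bdl)\) and \(\pAngle{g}{\sigma_k(\bdh)}=0\), then \(\bdh\) is a zero of the positive semidefinite Hessian form, hence lies in its kernel, and vanishing of the associated bilinear form against all \(\bdh'\) gives exactly \(g\perp\sAngle{\bdh}_2\). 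Second, in the converse direction you simply ``set \(\bar f:=\sigma_k(\bdl)-g\)''; this does not work. One must take \(\bar f=\sigma_k(\bdl)-\epsilon g\), and even then \(\phi+\epsilon\psi\) (with \(\phi(\bdh)=4\nVert{\sum l_ih_i}^2\), \(\psi(\bdh)=2\pAngle{g}{\sigma_k(\bdh)}\)) is positive semidefinite for small \(\epsilon\) only because the equality clause forces every zero of \(\psi\) on \(\kernel\phi\) to lie in \(\kernel\psi\), allowing one to quotient by the common kernel and get \(\psi\) positive \emph{definite} on \(\kernel\phi\). Without that, a zero of \(\psi\) on \(\kernel\phi\) not in \(\kernel\psi\) produces nearby directions where \(\psi\) is negative of first order while \(\phi\) is only of second order, and no \(\epsilon>0\) works. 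Your proposal never uses the equality clause in this direction, which is where it is essential.

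The more serious gap is in (ii)$\Rightarrow$(iii). Your reduction to the quadratic form \(t\mapsto F(\bdl+t\bdh_0)-F(\bdl)\) only tests straight-line perturbations, and that is provably insufficient: Example~\ref{ex:TernaryQuarticsBoundaryPoint} of the paper exhibits a second-order stationary point on the Veronese surface satisfying \(g\perp\sAngle{\bdh}_2\) for every syzygy \(\bdh\), which nevertheless fails to be a local minimum along a \emph{curved} path \(\gamma(z)=\bdl+\bdl^{(1)}z+\bdl^{(2)}z^2\) whose objective drops at fourth order. (``No descent along any line'' never implies local minimality.) The paper handles arbitrary descent paths via the Nash curve selection lemma and an induction on the coefficients of the power-series expansion of \(\Phi_\gamma\) (Lemma~\ref{lemma:LocalMinimumSufficientCondition}), whose hypothesis --- that \(\pAngle{g}{\sigma_k(\bdh)}=0\) for \(\bdh\in\syzygy_1(\bdl)\) forces each \(h_i\in\sAngle{\bdl}_1\) --- is what real-radicality actually delivers, through the pointedness of \(\Sigma_X/\sAngle{\bdl}_2\) (Lemma~\ref{lemma:RealRadicalTangentConeCondition}), not through smoothness or reducedness of the fiber \(\sigma_k^{-1}(\sigma_k(\bdl))\) as you suggest. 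Your ``compactness/continuity argument on a small sphere'' does not substitute for this induction, and as it stands the proposal would certify the point of Example~\ref{ex:TernaryQuarticsBoundaryPoint} as a local minimum, which it is not.
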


Next we expand our scope to varieties of higher degree.
Example~\ref{ex:QuarticsSpuriousMinima} shows that 
on a general variety $X\subseteq\bbP^n$, there may exist spurious local minima in the interior of $\Sigma_X$, and that this behaviour persists even when $k$ is larger than any prescribed fraction of $n+1$.
We therefore focus on bounding the size of the locus of spurious second-order stationary points in the interior.

One major difficulty is the determination of the Pythagoras numbers for these varieties.
While this is a quite difficult task in general, recent work~\cite{blekherman2021sums} has shown some upper bounds on the Pythagoras number. 
For instance, \cite[Theorem 2.2]{blekherman2021sums} says that if $r(X)$ is the smallest integer $k$ such that any $k$ linearly independent linear forms with no common zero on $X$ generate \(R_2\), then \(\Pythagoras(X)\le r(X)\). 
Our next theorem shows that for such $k$ the locus of spurious first-order stationary points in the interior of \(\Sigma_X\) is small.

\begin{theorem} \label{thm:SmallLocusSpuriousMinima} 
    Suppose $X\subseteq \bbP^n$ is a smooth, totally real variety. 
    If $k\geq r(X)$ then the Zariski closure of the set of $\sigma_k(\bdl)$ as $\bdl$ ranges over the spurious first-order stationary points for some $\bar{f}\in R_2$, such that  $\sigma_k(\bdl)$ is interior to $\Sigma_X$ has codimension at least two.
\end{theorem}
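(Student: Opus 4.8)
The plan is to reduce the statement to a dimension count on two explicit strata of the critical locus of $\sigma_k$, the dominant one being an incidence variety of forms singular at a prescribed non‑real point.

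First I would record the exact reduction: a tuple $\bdl$ occurs as a spurious first‑order stationary point for some target $\bar f$ with $\sigma_k(\bdl)$ interior to $\Sigma_X$ if and only if $\sigma_k(\bdl)\in\interior\Sigma_X$ and $\diff_\bdl\sigma_k$ is \emph{not} surjective, i.e.\ $\sAngle{\bdl}_2=\image\diff_\bdl\sigma_k\neq R_2$. Indeed, first‑order stationarity for $\bar f$ means $\bar f-\sigma_k(\bdl)\perp\sAngle{\bdl}_2$; if $\diff_\bdl\sigma_k$ were onto this forces $\bar f=\sigma_k(\bdl)\in\Sigma_X$, and since $k\geq r(X)\geq\Pythagoras(X)$ we have $\image\sigma_k=\Sigma_X$, so $\bdl$ would be a global minimizer, contradicting spuriousness. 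Conversely, if $\diff_\bdl\sigma_k$ is not onto, pick $0\neq g\perp\sAngle{\bdl}_2$ so small that $\bar f:=\sigma_k(\bdl)+g$ still lies in $\interior\Sigma_X$; then $\bdl$ is stationary for $\bar f$ while $\sigma_k(\bdl)\neq\bar f$, so it is spurious. Write $\caB$ for this set; then $\sAngle{\bdl}_2=V_\bdl\cdot R_1$ with $V_\bdl:=\linspan(l_1,\dots,l_k)$, the set in the statement is $\sigma_k(\caB)$, and (since $\Sigma_X$ spans $R_2$) codimension in $R_2$ and in $\Sigma_X$ agree. For $\bdl\in\caB$, $\sigma_k(\bdl)$ is strictly positive on $X(\bbR)$, so $V_\bdl$ has no common real zero; split $\caB=\caB_{\mathrm A}\sqcup\caB_{\mathrm B}$ according to whether $V_\bdl$ has a common zero on $X(\bbC)$ or not.

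For $\bdl\in\caB_{\mathrm A}$, the common zero $p$ of $V_\bdl$ is a non‑real point, each $l_i$ vanishes at $p$ and at $\bar p$, and hence $\sigma_k(\bdl)=\sum l_i^2$ vanishes to order $\geq 2$ at $p$. Because $X\subseteq\bbP^n$ is a smooth subvariety, degree‑two forms separate $2$‑jets at every point, so ``vanishing to order $\geq 2$ at $p$'' cuts out a subspace $L_p\subseteq R_2$ of complex codimension $\dim X+1$; as $\sigma_k(\bdl)$ is real the conjugate conditions at $\bar p$ are automatic, and for $p$ outside a proper subvariety the combined conditions at $p$ and $\bar p$ are jointly independent, so $L_p$ has real codimension $2(\dim X+1)$ in $R_2$. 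Thus $\sigma_k(\caB_{\mathrm A})$ lies in the projection to $R_2$ of the incidence set $\{(p,q):p\in X(\bbC)\setminus X(\bbR),\ q\in R_2,\ q\text{ vanishes to order}\geq2\text{ at }p\}$, a real‑algebraic family over a base of real dimension $\leq 2\dim X$ whose generic fibre has real dimension $\dim_\bbR R_2-2(\dim X+1)$. Its Zariski closure in $R_2$ therefore has dimension at most $2\dim X+\bigl(\dim_\bbR R_2-2\dim X-2\bigr)=\dim_\bbR R_2-2$.

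For $\bdl\in\caB_{\mathrm B}$, $V_\bdl$ has empty base locus on $X$ but $V_\bdl\cdot R_1\neq R_2$; the property defining $r(X)$ (together with a base‑locus dimension estimate for the morphism attached to $V_\bdl$) bounds $\dim V_\bdl$, so $\sigma_k(\bdl)$ is a sum of boundedly many squares whose linear span ranges over a \emph{proper} subvariety of the Grassmannian $\mathrm{Gr}(\dim V_\bdl,R_1)$; a dimension count over that subvariety—using $\dim\Symmetric^2 V_\bdl\leq\dim R_2-1$ and quotienting by the $\mathrm{O}(\dim V_\bdl)$‑symmetry of $\sigma_k$—bounds the Zariski closure of $\sigma_k(\caB_{\mathrm B})$ by $\dim R_2-2$ as well. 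Since $\overline{\sigma_k(\caB)}=\overline{\sigma_k(\caB_{\mathrm A})}\cup\overline{\sigma_k(\caB_{\mathrm B})}$, the theorem follows. The main obstacle is the $\caB_{\mathrm B}$ stratum: the defining property of $r(X)$ a priori constrains only $r(X)$‑tuples, so one needs a monotonicity/extension argument for it (or else an argument that these rank‑deficient base‑point‑free tuples already have their $\sigma_k$‑image inside the $\caB_{\mathrm A}$ locus), after which the Grassmannian count must be pushed through uniformly in $\dim V_\bdl$. A secondary technical point is making the $\caB_{\mathrm A}$ count rigorous over $\bbR$: working with the incidence variety over the non‑real points of $X(\bbC)$, controlling the locus where the $2$‑jet conditions at $p$ and $\bar p$ fail to be independent so that it does not inflate the dimension, and using that the Zariski closure of a semialgebraic set has the same dimension as the set.
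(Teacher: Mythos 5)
Your reduction is correct: for $k\ge r(X)\ge\Pythagoras(X)$, a tuple $\bdl$ with $\sigma_k(\bdl)\in\interior\Sigma_X$ is a spurious first-order stationary point for some target exactly when $\sAngle{\bdl}_2\ne R_2$. Your treatment of the stratum $\calB_{\mathrm{A}}$ (common non-real zero) is essentially the paper's argument with the citation unpacked: the paper observes that $\sigma_k(\bdl)$ is then singular at a point of $X$, hence lies in $\Delta$, and quotes $\codim(\Delta\cap\interior\Sigma_X)\ge2$ from \cite{blekherman2021sums}; your incidence count over the non-real points of $X(\bbC)$ is the standard proof of that bound, and the technical points you flag (independence of the $2$-jet conditions at $p$ and $\bar p$) are handled by the fact that the second Veronese re-embedding is an embedding.

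The genuine gap is the stratum $\calB_{\mathrm{B}}$, which you leave open as ``the main obstacle.'' This is precisely the stratum that the hypothesis $k\ge r(X)$ is designed to annihilate, and no Grassmannian count is needed for it. If $l_1,\dots,l_k$ are linearly independent with no common zero on $X(\bbC)$, their span $V$ is base-point-free of dimension $k\ge r(X)>\dim X$; $r(X)$ generic elements of $V$ are linearly independent and still have no common zero (each generic member cuts the dimension of the common zero locus, which starts out empty), so by the definition of $r(X)$ they generate $R_2$, whence $\sAngle{\bdl}_2=V\cdot R_1=R_2$ and $\bdl$ is not first-order stationary for any target. This one-line Bertini observation is the ``monotonicity/extension argument'' you ask for; with it, $\calB_{\mathrm{B}}$ is empty for linearly independent tuples and your proposed count over $\mathrm{Gr}(\dim V_\bdl,R_1)$ is unnecessary. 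The only case it does not cover is a linearly dependent tuple whose span has dimension $<r(X)$ yet is base-point-free with $V\cdot R_1\ne R_2$; there your dimension count would genuinely be required and is not carried out (nor is it clear it gives codimension two in general). Note that the paper's own proof sidesteps this residual case by arguing about spurious local minima, for which Corollary~\ref{cor:LinearDependenceNotLocalMinimum} excludes linear dependence; if you insist on literal first-order stationarity you must either prove the bound for that degenerate-span locus or restrict, as the paper implicitly does, to points that are at least second-order stationary.
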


Our final contribution stems from a practical consideration. 
In polynomial optimization, people are often interested in a general form of the sum-of-\(k\)-squares problem :
 \begin{equation}\label{eq:SOSRelaxationProblem}
    v^*:=\max\{\ubar{v}\le v\le\bar{v}:f-v\cdot g=\sigma_k(\bdl),\text{ for some }\bdl\in R_1^k\},
\end{equation}
where \(f,g\in R_2\), \(\ubar{v}\in\bbR\) and \(\bar{v}\in\bbR\cup\{+\infty\}\).
Does the characterization in Theorem~\ref{thm:SmallLocusSpuriousMinima}
allow us to use local descent methods for finding a global optimum in~\eqref{eq:SOSRelaxationProblem}? 
We answer this question affirmatively by proposing a \emph{restricted path} algorithm for the nonconvex low-rank formulation~\eqref{eq:SOSMinimizationProblem} in Algorithm~\ref{alg:AvoidSpuriousStationaryPoints}.
That is, instead of allowing an arbitrary path for the local descent method, one can restrict each iteration close to the line \(f+\bbR g\) using intermediate targets along this path.
In this way, spurious stationary points on the boundary can be avoided until the search gets close to the optimum \(v^*\), while those in the interior can be avoided as generically the line \(f+\bbR g\) does not intersect with the locus of stationary points asserted by Theorem~\ref{thm:SmallLocusSpuriousMinima}.
Extensive numerical experiments of the formulation~\eqref{eq:SOSRelaxationProblem} are provided on both varieties of minimal degree and other varieties at the end of the paper, to illustrate successes of avoiding spurious stationary points in the interior empirically.

\section{Characterizations of spurious second-order stationary points and local minima}

In this section, we first review the optimality conditions for the rank-\(k\) formulation~\eqref{eq:SOSMinimizationProblem}.
As we are mostly interested in the existence of spurious second-order stationary points and spurious local minima, we then unfix the target \(\bar{f}\) and study the differential information of the objective function in~\eqref{eq:SOSMinimizationProblem} at a given point \(\bdl=(l_1,\dots,l_k)\) with varying \(\bar{f}\).
In particular, we define a cone of \emph{reachable directions} which leads to a necessary condition for \(\bdl\) being a spurious second-order stationary point, and then identify a sufficient condition for \(\bdl\) being a spurious local minimum.
We then prove Theorem~\ref{thm:CharacterizationSpuriousMinima} and mention some of its useful consequences at the end of this section.

We begin with the restating the optimality conditions in our setting.
\begin{lemma}\label{lemma:OptimalityConditions}
    Let \(\bdl=(l_1,\dots,l_k)\in R_1^k\) and \(\bar{f}\in R_2\).
    Then the optimality conditions can be written as follows:
    \begin{itemize}
        \item (first-order) \(\pAngle{\sigma_k(\bdl)-\bar{f}}{\sum_{i=1}^{k}l_ih_i}=0\) for any \(\bdh=(h_1,\dots,h_k)\in R_1^k\); and
    \item (second-order) in addition to the first-order condition, the quadratic map \(\bdh\mapsto 4\nVert{\sum_{i=1}^{k}l_ih_i}^2+2\pAngle{\sigma_k(\bdl)-\bar{f}}{\sigma_k(\bdh)}\) is positive semidefinite on \(R_1^k\).
    \end{itemize}
\end{lemma}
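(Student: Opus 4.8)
The plan is to derive the optimality conditions directly from Lemma~\ref{lemma:OptimalityConditions} by expanding the objective function $F(\bdl):=\nVert{\sigma_k(\bdl)-\bar f}^2$ around a point and collecting terms by degree in the perturbation. Write $\bdl+\bdh$ for $\bdh=(h_1,\dots,h_k)\in R_1^k$; since $\sigma_k$ is a quadratic map we have the exact expansion $\sigma_k(\bdl+\bdh)=\sigma_k(\bdl)+\diff_\bdl\sigma_k(\bdh)+\sigma_k(\bdh)$, where $\diff_\bdl\sigma_k(\bdh)=2\sum_{i=1}^k l_i h_i$. Substituting into $F$ and using bilinearity of $\pAngle{\cdot}{\cdot}$, one obtains
\begin{equation*}
    F(\bdl+\bdh)=F(\bdl)+2\pAngle{\sigma_k(\bdl)-\bar f}{\textstyle\sum_i l_i h_i}+\big(\nVert{2\textstyle\sum_i l_i h_i}^2+2\pAngle{\sigma_k(\bdl)-\bar f}{\sigma_k(\bdh)}\big)+O(\nVert{\bdh}^3),
\end{equation*}
where the cubic and quartic remainder terms come from cross terms involving $\sigma_k(\bdh)$ paired against the linear part and against itself.

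First I would record that the gradient of $F$ at $\bdl$ is the linear functional $\bdh\mapsto 2\pAngle{\sigma_k(\bdl)-\bar f}{\sum_i l_i h_i}$, so the first-order stationarity condition $\nabla F(\bdl)=0$ is exactly the stated identity $\pAngle{\sigma_k(\bdl)-\bar f}{\sum_i l_i h_i}=0$ for all $\bdh$; equivalently $\sigma_k(\bdl)-\bar f\perp\sAngle{\bdl}_2$. Next, assuming first-order stationarity, the linear term drops out and the quadratic form in $\bdh$ governing the local behavior is the Hessian $\bdh\mapsto 4\nVert{\sum_i l_i h_i}^2+2\pAngle{\sigma_k(\bdl)-\bar f}{\sigma_k(\bdh)}$, whose positive semidefiniteness is precisely the classical second-order necessary condition for a local minimum. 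I would note that the map $\bdh\mapsto\sigma_k(\bdh)=\sum_i h_i^2$ is itself a quadratic (hence homogeneous degree-two) map on $R_1^k$, so $\pAngle{\sigma_k(\bdl)-\bar f}{\sigma_k(\bdh)}$ is a genuine quadratic form in $\bdh$, and likewise $\nVert{\sum_i l_i h_i}^2$; thus the displayed expression is a well-defined quadratic form on $R_1^k$.

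The only mild subtlety — and the step I would be most careful about — is justifying that it is legitimate to pass to these conditions on the finite-dimensional vector space $R_1^k$ rather than on the ambient $S_1^k$: since $R_1$ is a finite-dimensional real vector space and $F$ is a polynomial function on $R_1^k$, the standard first- and second-order optimality conditions from smooth finite-dimensional optimization apply verbatim, with gradient and Hessian identified via the fixed inner product. I would therefore (i) expand $F(\bdl+\bdh)$ as above, (ii) read off the first-order condition as vanishing of the degree-one-in-$\bdh$ part, (iii) under first-order stationarity, read off the Hessian as the degree-two-in-$\bdh$ part, and (iv) invoke the standard second-order necessary condition. No genuine obstacle arises here; the lemma is essentially a bookkeeping translation of textbook optimality conditions into the sum-of-squares notation, and the proof is short.
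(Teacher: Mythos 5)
Your proposal is correct and follows essentially the same route as the paper: expand the quartic polynomial $\nVert{\sigma_k(\bdl+\bdh)-\bar{f}}^2$ exactly, read off the gradient from the degree-one part and the Hessian from the degree-two part, and invoke the standard finite-dimensional optimality conditions. The only (immaterial) slip is the constant in your linear term, which should be $4\pAngle{\sigma_k(\bdl)-\bar f}{\sum_i l_i h_i}$ rather than $2\pAngle{\sigma_k(\bdl)-\bar f}{\sum_i l_i h_i}$ since $\diff_\bdl\sigma_k(\bdh)=2\sum_i l_ih_i$; this does not affect the vanishing condition.
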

\begin{proof}
    We can calculate the gradient and Hessian for the objective function of~\eqref{eq:SOSMinimizationProblem} by taking an small perturbation \(\varepsilon>0\) and noting
    \begin{equation*}
        \begin{aligned}
            \nVert{\sigma_k(\bdl+\varepsilon\bdh)-\bar{f}}^2-\nVert{\sigma_k(\bdl)-\bar{f}}^2=&\  
    4\pAngle{\sigma_k(\bdl)-\bar{f}}{\textstyle\sum_{i=1}^kl_ih_i}\cdot\varepsilon 
    +\bigl(4\nVert{\textstyle\sum_{i=1}^kl_ih_i}^2+2\pAngle{\sigma_k(\bdl)-\bar{f}}{\sigma_k(\bdh)}\bigr)\cdot\varepsilon^2\\
    &+4\pAngle{\sigma_k(\bdh)}{\textstyle\sum_{i=1}^kl_ih_i}\cdot\varepsilon^3
    +\nVert{\sigma_k(\bdh)}^2\cdot\varepsilon^4.
        \end{aligned}
    \end{equation*}
    This shows that gradient is the map \((h_1,\dots,h_k)\mapsto 4\pAngle{\sigma_k(\bdl)-\bar{f}}{\textstyle\sum_{i=1}^kl_ih_i}\) and the Hessian matrix is the map \(\bdh=(h_1,\dots,h_k)\mapsto8\nVert{\textstyle\sum_{i=1}^kl_ih_i}^2+4\pAngle{\sigma_k(\bdl)-\bar{f}}{\sigma_k(\bdh)}\).
    The remainder of the proof follows directly from the definitions of the first and second-order optimality conditions.
\end{proof}

The optimality conditions depend on the target \(\bar{f}\), which is inconvenient for studying the existence questions.
Given \(\bdl\in R_1^k\), we want to check which target \(\bar{f}\) would make \(\bdl\) satisfy the optimality conditions, and whether it would make \(\bdl\) a spurious local minimum.
Thus we take a closer look at the differential information at \(\bdl\) as follows.
Recall that \emph{normal cone} of \(\Sigma_X\) at the point \(f\in\Sigma_X\) can be defined as
\begin{equation}
    \calN_{f}(\Sigma_X):=\{g\in R_2: \pAngle{g}{f'-f}\le 0\text{ for all }f'\in\Sigma_X\},
\end{equation}
and the \emph{tangent cone} at the same point as
\begin{equation}\label{eq:DefTangentCone}
    \calT_{f}(\Sigma_X):=\clos\{g\in R_2: f+\delta g\in\Sigma_X\text{ for some }\delta >0\}.
\end{equation}
We simply write \(\calN_f\) for \(\calN_f(\Sigma_X)\), and \(\calT_f\) for \(\calT_f(\Sigma_X)\) when there is no confusion about \(\Sigma_X\).
By definition, \(\calN_{f}\) and \(\calT_{f}\) form a pair of polar cones. 
These two cones have explicit descriptions in our case.
\begin{lemma}\label{lemma:NormalConesCharacterization}
    Given any \(\bdl=(l_1,\dots,l_k)\in R_1^k\), let \(\sAngle{\bdl}_2\subseteq R_2\) denote the degree-2 part of the ideal generated by \(l_1,\dots,l_k\).
    Then \(\calN_{\sigma_k(\bdl)}=-\Sigma_X^*\cap\sAngle{\bdl}_2^\perp\) and \(\calT_{\sigma_k(\bdl)}=\clos(\Sigma_X+\sAngle{\bdl}_2)\).
\end{lemma}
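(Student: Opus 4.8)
The plan is to establish the normal-cone formula first — that is where the actual content lies — and then to obtain the tangent-cone formula from it by a routine polarity argument.

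\emph{The normal cone.} I would begin from the general fact that for any convex cone $K$ and any $f\in K$ one has $\calN_f(K)=-K^*\cap f^\perp$: since $0,2f\in K$, testing the inequality $\pAngle{g}{f'-f}\le 0$ defining $\calN_f(K)$ at $f'=0$ and at $f'=2f$ forces $\pAngle{g}{f}=0$, and then the remaining inequalities say precisely that $g$ lies in the polar cone $-K^*$. Applying this with $K=\Sigma_X$ and $f=\sigma_k(\bdl)$ gives $\calN_{\sigma_k(\bdl)}=-\Sigma_X^*\cap\sigma_k(\bdl)^\perp$, so the task reduces to showing that, inside $-\Sigma_X^*$, the single scalar condition $g\perp\sigma_k(\bdl)$ already forces $g\perp\sAngle{\bdl}_2$. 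One inclusion is immediate because $\sigma_k(\bdl)=\sum_i l_i^2\in\sAngle{\bdl}_2$. For the converse, given $g$ with $-g\in\Sigma_X^*$ and $\pAngle{g}{\sigma_k(\bdl)}=0$, I would introduce the symmetric bilinear form $Q(p,q):=\pAngle{-g}{pq}$ on $R_1$; it is positive semidefinite because $p^2\in\Sigma_X$ for every $p\in R_1$ while $-g\in\Sigma_X^*$. Then $0=\pAngle{-g}{\sigma_k(\bdl)}=\sum_{i=1}^{k}Q(l_i,l_i)$ is a sum of nonnegative numbers, so each $Q(l_i,l_i)=0$; the Cauchy--Schwarz inequality for the PSD form $Q$ then puts every $l_i$ in the kernel of $Q$, i.e.\ $\pAngle{g}{l_i q}=0$ for all $q\in R_1$. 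Since $\sAngle{\bdl}_2=\sum_{i=1}^{k}l_i R_1$, this gives $g\perp\sAngle{\bdl}_2$, proving $\calN_{\sigma_k(\bdl)}=-\Sigma_X^*\cap\sAngle{\bdl}_2^\perp$.

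\emph{The tangent cone.} As noted just above, $\calN_{\sigma_k(\bdl)}$ and $\calT_{\sigma_k(\bdl)}$ are polar cones, and $\calT_{\sigma_k(\bdl)}$ is closed (it is defined as a closure), so $\calT_{\sigma_k(\bdl)}=(\calN_{\sigma_k(\bdl)})^{\circ}=(-\Sigma_X^*\cap\sAngle{\bdl}_2^\perp)^{\circ}$, where $(\cdot)^{\circ}$ denotes the polar cone. I would then use the standard identity $(A\cap B)^{\circ}=\clos(A^{\circ}+B^{\circ})$ for closed convex cones, with $A=-\Sigma_X^*$ (closed, being a dual cone) and $B=\sAngle{\bdl}_2^\perp$ (a closed subspace). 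The bipolar theorem gives $A^{\circ}=(-\Sigma_X^*)^{\circ}=\clos(\Sigma_X)$, since $-\Sigma_X^*$ is the polar of the convex cone $\Sigma_X$, and $B^{\circ}=\sAngle{\bdl}_2^{\perp\perp}=\sAngle{\bdl}_2$ since $\sAngle{\bdl}_2$ is finite-dimensional. Hence $\calT_{\sigma_k(\bdl)}=\clos(\clos(\Sigma_X)+\sAngle{\bdl}_2)=\clos(\Sigma_X+\sAngle{\bdl}_2)$. One could instead verify the inclusion $\clos(\Sigma_X+\sAngle{\bdl}_2)\subseteq\calT_{\sigma_k(\bdl)}$ by hand — $\Sigma_X\subseteq\calT_{\sigma_k(\bdl)}$ because $\Sigma_X$ is a convex cone containing $\sigma_k(\bdl)$, and $\sAngle{\bdl}_2\subseteq\calT_{\sigma_k(\bdl)}$ because $t\mapsto\sigma_k(\bdl+t\bdh)$ is a curve in $\Sigma_X$ through $\sigma_k(\bdl)$ with velocity $2\sum_i l_ih_i$ — but the reverse inclusion would still require the normal-cone computation, so the polarity route is cleaner.

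\emph{Where the work is.} The only genuinely non-formal step is the passage in the first part from the one-dimensional orthogonality $g\perp\sigma_k(\bdl)$ to the full $g\perp\sAngle{\bdl}_2$, which I expect to be the main (small) obstacle; everything else is bookkeeping with polar cones and the bipolar theorem. It is worth noting that this argument uses only closedness of the dual cone $\Sigma_X^*$, which always holds, together with the bipolar theorem, so no closedness hypothesis on $\Sigma_X$ itself is needed — which also explains why the outer closure in the definition of $\calT_{\sigma_k(\bdl)}$ cannot be dropped in general.
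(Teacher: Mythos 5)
Your proof is correct, but the heart of it — the reverse inclusion $\calN_{\sigma_k(\bdl)}\subseteq-\Sigma_X^*\cap\sAngle{\bdl}_2^\perp$ — goes by a genuinely different route than the paper's. The paper works directly with the perturbation $\bdh\mapsto\pAngle{g}{\sigma_k(\bdl+\bdh)-\sigma_k(\bdl)}$, observes that $0$ is a maximizer of this function (linear plus quadratic in $\bdh$), and reads off $g\perp\sAngle{\bdl}_2$ from the vanishing of its linear part, after which $\pAngle{g}{\sigma_k(\bdh)}\le0$ gives $g\in-\Sigma_X^*$. You instead first invoke the general convex-analytic identity $\calN_f(K)=-K^*\cap f^\perp$ (correctly justified by testing $f'=0$ and $f'=2f$, both of which lie in $\Sigma_X$), and then upgrade the single orthogonality $g\perp\sigma_k(\bdl)$ to $g\perp\sAngle{\bdl}_2$ via Cauchy--Schwarz for the positive semidefinite form $Q(p,q)=\pAngle{-g}{pq}$ on $R_1$. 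Both arguments are sound; yours has the mild advantages that it decouples the convex analysis from the algebra of $\sigma_k$, that it handles \emph{every} $f'\in\Sigma_X$ rather than only those expressible as $\sigma_k(\bdl+\bdh)$ (the paper's forward inclusion is phrased only for such $f'$), and that it isolates the reusable fact that $-g\in\Sigma_X^*$ together with $\pAngle{g}{\sum_i l_i^2}=0$ already forces $g\perp\sAngle{\bdl}_2$ — essentially the complementary-slackness mechanism that reappears elsewhere in the paper. The paper's version, in exchange, is the one that generalizes to the second-order analysis in Lemma~\ref{lemma:OptimalityConditions} and Theorem~\ref{thm:CharacterizationSpuriousMinima}. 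Your treatment of the tangent cone (polarity, $(A\cap B)^\circ=\clos(A^\circ+B^\circ)$, bipolar theorem) is the same as the paper's, just with the details spelled out.
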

\begin{proof}
    For the inclusion \(-\Sigma_X^*\cap\sAngle{\bdl}_2^\perp\subseteq \calN_{\sigma_k(\bdl)}\), note that for any \(g\in-\Sigma_X^*\cap\sAngle{\bdl}_2^\perp\), we have \(\pAngle{g}{f-\sigma_k(\bdl)}=\pAngle{g}{\diff_\bdl\sigma_k(\bdh)}+\pAngle{g}{\sigma_k(\bdh)}\le0\) for any \(f=\sigma_k(\bdl+\bdh)\in\Sigma_X\), \(\bdh=(h_1,\dots,h_k)\in R_1^k\).
    For the other inclusion \(\calN_{\sigma_k(\bdl)}\subseteq -\Sigma_X^*\cap\sAngle{\bdl}_2^\perp\), take any \(g\in \calN_{\sigma_k(\bdl)}\).
    By definition we see that \(\bdh=0\) is a maximum of the function \(G(\bdh):=\pAngle{g}{\sum_{i=1}^{k}l_ih_i}+\pAngle{g}{\sigma_k(\bdh)}\).
    The optimality condition implies that \(\pAngle{g}{\sum_{i=1}^{k}l_ih_i}=0\) for any \(\bdh\in R_1^k\), so \(g\in\sAngle{\bdl}_2^\perp\).
    Thus we see that \(G(\bdh)=\pAngle{g}{\sigma_k(\bdh)}\le 0\), which shows that \(g\in -\Sigma_X^*\).
    Finally it follows from the polarity relation with \(\calN_{\sigma_k(\bdl)}\) that \(\calT_{\sigma_k(\bdl)}=\clos(\Sigma_X+\sAngle{\bdl}_2)\).
\end{proof}

We note that the tangent cone \(\calT_{\sigma_k(\bdl)}(\Sigma_X)\) at the sum of squares \(f=\sigma_k(\bdl)\) in definition~\eqref{eq:DefTangentCone} does not depend on a particular sum of squares representation of $f$.
It is possible that some directions in the tangent cone cannot be reached through local perturbation away from \(\bdl\).
This is a situation where second-order stationary points may occur in~\eqref{eq:SOSMinimizationProblem}, motivated by which we define a \emph{cone of (linearly) reachable directions} at \(\bdl\in R_1^k\) as 
\begin{equation}
    \calR_\bdl(\Sigma_X):=\sigma_k(\kernel{\diff_\bdl\sigma_k})+\image{\diff_\bdl\sigma_k}=\sigma_k(\syzygy_1(\bdl))+\sAngle{\bdl}_2.
\end{equation}
As before, we write \(\calR_\bdl\) for \(\calR_\bdl(\Sigma_X)\) if no confusion is caused.
To justify its name, we note that for any \(f=\sigma_k(\bdl')+\diff_\bdl\sigma_k(\bdl'')\in\calR_\bdl\), for some \(\bdl'\in\syzygy_1(\bdl)\) and \(\bdl''\in R_1^k\), we can construct a curve \(\gamma:(-1,1)\to R_1^k\) defined by \(\gamma(z)=\bdl+\bdl'\cdot\sqrt{z}+\bdl''\cdot z\), such that
\begin{equation}\label{eq:LocallyParametrizedTangentCone}
    \gamma(0)=\sigma_k(\bdl)
    \text{ and }
    \frac{\diff}{\diff z}(\sigma_k\circ\gamma)\vert_{z=0}=\frac{\diff}{\diff z}\sum_{i=1}^k[l_i^2+((l'_i)^2+2l_il''_i)z+o(z)]\big\vert_{z=0}=f.
\end{equation}
Therefore, it is clear that \(\calR_\bdl\subseteq \calT_{\sigma_k(\bdl)}\) and thus so is its closed convex hull \(\clos\conv\calR_\bdl\subseteq \calT_{\sigma_k(\bdl)}\) for any \(\bdl\in R_1^k\).
The containment may be strict:
note that the dual cone of \(\calR_\bdl\) consists of perturbation directions \(g\perp\sAngle{\bdl}_2\) such that \(\bdh\mapsto\pAngle{g}{\sigma_k(\bdh)}\) is positive semidefinite.
Condition (ii) in Theorem~\ref{thm:CharacterizationSpuriousMinima} thus implies that the closed convex hull \(\clos\conv\calR_\bdl\) is strictly contained in \(\calT_{\sigma_k(\bdl)}\).
In fact, they are equivalent when the ideal \(\sAngle{\bdl}\subset R\) is real radical as discussed below.

\begin{lemma}\label{lemma:RealRadicalTangentConeCondition}
    Let \(\bdl=(l_1,\dots,l_k)\in R_1^k\) such that the ideal \(\sAngle{\bdl}\subset R\) is real radical.
    Then \(\clos\conv\calR_\bdl\subsetneq \calT_{\sigma_k(\bdl)}\) if and only if Condition~\textnormal{(ii)} in Theorem~\ref{thm:CharacterizationSpuriousMinima} holds, i.e., there exists \(g\in R_2\) such that \(g\perp\sAngle{\bdl}_2\), \(g\notin\Sigma_X^*\), and for any \(\bdh\in\syzygy_1(\bdl)\), \(\pAngle{g}{\sigma_k(\bdh)}\ge0\), where the equality may hold only if \(g\perp\sAngle{\bdh}_2\).
\end{lemma}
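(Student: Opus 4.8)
The plan is to establish the two implications separately, using the polarity dictionary already set up. Write $\Sigma = \Sigma_X$ for brevity. The key fact I would use throughout is that $\calR_\bdl$ and its closed convex hull are cones with the same dual cone, namely
\[
    \calR_\bdl^* = (\calR_\bdl)^\circ = \{g \in R_2 : g \perp \sAngle{\bdl}_2,\ \pAngle{g}{\sigma_k(\bdh)} \ge 0 \text{ for all } \bdh \in \syzygy_1(\bdl)\},
\]
which follows directly from the definition $\calR_\bdl = \sigma_k(\syzygy_1(\bdl)) + \sAngle{\bdl}_2$ and the fact that $\sigma_k(\syzygy_1(\bdl))$ spans (positively) a cone whose dual condition on $g$ is exactly $\pAngle{g}{\sigma_k(\bdh)} \ge 0$ for all $\bdh \in \syzygy_1(\bdl)$ once $g \perp \sAngle{\bdl}_2$ is imposed. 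Since $\calT_{\sigma_k(\bdl)} = \clos(\Sigma + \sAngle{\bdl}_2)$ by Lemma~\ref{lemma:NormalConesCharacterization}, we have $\calT_{\sigma_k(\bdl)}^* = \calN_{\sigma_k(\bdl)} = -\Sigma^* \cap \sAngle{\bdl}_2^\perp$. Because both $\clos\conv\calR_\bdl$ and $\calT_{\sigma_k(\bdl)}$ are closed convex cones with $\clos\conv\calR_\bdl \subseteq \calT_{\sigma_k(\bdl)}$, the inclusion is \emph{proper} if and only if the dual inclusion $\calT_{\sigma_k(\bdl)}^* \subseteq (\clos\conv\calR_\bdl)^* = \calR_\bdl^*$ is proper. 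So the entire statement reduces to: there exists $g \in \calR_\bdl^* \setminus \calN_{\sigma_k(\bdl)}$ if and only if Condition~(ii) holds.

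For the ``only if'' direction, suppose $g \in \calR_\bdl^* \setminus \calN_{\sigma_k(\bdl)}$. Then $g \perp \sAngle{\bdl}_2$ and $\pAngle{g}{\sigma_k(\bdh)} \ge 0$ for all $\bdh \in \syzygy_1(\bdl)$, directly from $g \in \calR_\bdl^*$. Since $g \perp \sAngle{\bdl}_2$ and $g \notin \calN_{\sigma_k(\bdl)} = -\Sigma^* \cap \sAngle{\bdl}_2^\perp$, we must have $g \notin -\Sigma^*$; replacing $g$ by $-g$ throughout (note $\calR_\bdl^*$ and $\sAngle{\bdl}_2^\perp$ are not symmetric, so I would instead observe that $g \in \calR_\bdl^*$, $g \perp \sAngle{\bdl}_2$, $g \notin -\Sigma^*$ is already of the right shape after the sign convention in (ii), which writes $g \notin \Sigma_X^*$ — here I would be careful to match the paper's sign conventions and simply take $-g$ where needed). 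The only remaining point is the strict-equality clause: I claim that if $g$ lies in $\calR_\bdl^*$ but some $\bdh_0 \in \syzygy_1(\bdl)$ has $\pAngle{g}{\sigma_k(\bdh_0)} = 0$ while $g \not\perp \sAngle{\bdh_0}_2$, then $g$ can be perturbed into the interior of $\calR_\bdl^*$, hence we could have chosen a better $g$ from the start if the conclusion failed. More precisely, I would argue that the set of $g$ satisfying the full Condition~(ii) is exactly the relative interior of $\calR_\bdl^* \cap \sAngle{\bdl}_2^\perp$ intersected with the complement of $-\Sigma^*$, and that this is nonempty iff $\calR_\bdl^*$ is strictly larger than $\calN_{\sigma_k(\bdl)}$. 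This is where the hypothesis that $\sAngle{\bdl}$ is \textbf{real radical} enters: it guarantees, via the results invoked in Theorem~\ref{thm:CharacterizationSpuriousMinima}, that for $\bdh \in \syzygy_1(\bdl)$ the condition $g \perp \sAngle{\bdh}_2$ is equivalent to $\sigma_k(\bdh)$ lying in the face of $\Sigma$ determined by $\sigma_k(\bdl)$, so that the boundary behavior of $\calR_\bdl^*$ is controlled by real points of the variety $\{l_1 = \cdots = l_k = 0\}$.

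For the ``if'' direction, given $g$ as in Condition~(ii), the conditions $g \perp \sAngle{\bdl}_2$ and $\pAngle{g}{\sigma_k(\bdh)} \ge 0$ for all $\bdh \in \syzygy_1(\bdl)$ say precisely $g \in \calR_\bdl^*$, while $g \notin \Sigma_X^*$ (equivalently $-g \notin \calN_{\sigma_k(\bdl)}$, using $g \perp \sAngle{\bdl}_2$) shows $g \notin \calN_{\sigma_k(\bdl)}$; thus $\calR_\bdl^* \supsetneq \calN_{\sigma_k(\bdl)}$, and dualizing back gives $\clos\conv\calR_\bdl \subsetneq \calT_{\sigma_k(\bdl)}$. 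Here the equality clause in (ii) is not even needed for this direction — it is only used to pin down existence in the forward direction.

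The main obstacle I anticipate is the real-radical argument controlling the equality case: one must show that, under real radicality of $\sAngle{\bdl}$, the apparently stronger Condition~(ii) (with its ``equality only if $g \perp \sAngle{\bdh}_2$'' clause) is genuinely equivalent to the bare strict inclusion $\clos\conv\calR_\bdl \subsetneq \calT_{\sigma_k(\bdl)}$, rather than to something weaker. The natural route is a relative-interior / supporting-functional argument: pick $g$ in the relative interior of the convex cone $\calR_\bdl^* \cap \sAngle{\bdl}_2^\perp$ that lies outside $-\Sigma^*$ (possible exactly when the inclusion is strict, by a separation argument), and then verify that relative-interior membership forces the equality clause — an element $\bdh \in \syzygy_1(\bdl)$ with $\pAngle{g}{\sigma_k(\bdh)} = 0$ must have $\sigma_k(\bdh)$ in the lineality direction of $\calR_\bdl$, which under real radicality means $\sigma_k(\bdh) \in \sAngle{\bdl}_2$, i.e.\ $\bdh$ contributes nothing new, and hence $g \perp \sAngle{\bdh}_2$ as well. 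Carefully justifying that last step — that the exposed faces of $\calR_\bdl$ along syzygy directions are exactly the $\sAngle{\bdh}_2$ — is the technical heart, and it is precisely where I would lean on the real-radicality input and on whatever structural description of $\syzygy_1(\bdl)$ and $\sAngle{\bdl}_2$ the paper has developed for Theorem~\ref{thm:CharacterizationSpuriousMinima}.
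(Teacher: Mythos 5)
Your proposal follows essentially the same route as the paper: dualize the inclusion, observe that the "if" direction needs only the bare dual-cone membership, and for "only if" pick $g$ in the relative interior of $\calR_\bdl^*$ outside $\Sigma_X^*$ so that $\pAngle{g}{\sigma_k(\bdh)}=0$ forces $\sigma_k(\bdh)$ into the lineality space, which real radicality (pointedness of $\Sigma_X/\sAngle{\bdl}_2$) identifies with $\sAngle{\bdl}_2$. The only detail left to fill in, and which the paper makes explicit, is the last step: $\sum_i h_i^2\in\sAngle{\bdl}$ with $\sAngle{\bdl}$ real radical forces each $h_i\in\sAngle{\bdl}_1$, whence $\sAngle{\bdh}_2\subseteq\sAngle{\bdl}_2\perp g$; modulo this and some sign bookkeeping your argument matches the paper's.
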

\begin{proof}
    The sufficiency is clear as such \(g\) is in the dual cone of \(\calR_\bdl\), but \(g\notin-\calN_{\sigma_k(\bdl)}=\Sigma_X^*\cap\sAngle{\bdl}_2^\perp\) by Lemma~\ref{lemma:NormalConesCharacterization}.
    Next we show that it is a necessary condition. 
    Since \(\sAngle{\bdl}\) is real radical, it is known that \(\Sigma_X/\sAngle{\bdl}_2\) is a pointed cone~\cite[Lemma 2.1]{blekherman2016sums}.
    Thus \(\sAngle{\bdl}_2\) is the lineality space of \(\Sigma_X+\sAngle{\bdl}_2\) and thus also the lineality space of \(\clos\conv\calR_\bdl\).
    By assumption, we can take a direction \(g\) in the relative interior of \(\calR_\bdl^*\) that is not in \(\calT_{\sigma_k(\bdl)}^*=\Sigma_X^*\cap\sAngle{\bdl}_2^\perp\).
    This means that for any \(\bdh=(h_1,\dots,h_k)\in\syzygy_1(\bdl)\), \(\pAngle{g}{\sigma_k(\bdh)}\ge0\), and if \(\pAngle{g}{\sigma_k(\bdh)}=0\), then \(\sigma_k(\bdh)\) is in the lineality space \(\sAngle{\bdl}_2\).
    By the real radicalness of \(\sAngle{\bdl}\), we see that \(h_1,\dots,h_k\in\sAngle{\bdl}_1\), and consequently \(g\perp\sAngle{\bdh}_2\subseteq\sAngle{\bdl}_2\).
\end{proof}

Unlike second-order stationary points, being a local minimum is  more subtle as we may need to look at higher-order differentials.
One way is to consider any curve emanating from the tuple of linear forms \(\bdl\) and parametrized by a power series
\begin{equation}\label{eq:PowerSeriesExpansion}
    \gamma(z):=\sum_{s=0}^{\infty}z^s\bdl^{(s)}
\end{equation}
for some \(\bdl^{(s)}\in R_1^k\), for each \(s\in\bbZ_{\ge1}\), and \(\bdl^{(0)}:=\bdl\), which gives
\begin{equation}\label{eq:PowerSeriesSOS}
    \sigma_k(\gamma(z))=\sigma_k(\bdl)+\sum_{s=1}^{\infty}z^s\cdot\sum_{t=0}^{s}\sum_{i=1}^{k}l_i^{(t)}l_i^{(s-t)}.
\end{equation}
Let \(f:=\sigma_k(\bdl)-\bar{f}\), and the associated objective function in~\eqref{eq:SOSMinimizationProblem}, denoted as \(\Phi_\gamma\), becomes
\begin{equation}\label{eq:PowerSeriesObj}
    \Phi_\gamma(z)=\nVert{f}^2+\sum_{s=1}^{\infty}z^s\Bigl(2\pAngle{f}{\sum_{t=0}^{s}\sum_{i=1}^{k}l_i^{(t)}l_i^{(s-t)}}+\sum_{r=1}^{s-1}\pAngle{\sum_{p=0}^{r}\sum_{i=1}^{k}l_i^{(p)}l_i^{(r-p)}}{\sum_{q=0}^{s-r}\sum_{j=1}^{k}l_j^{(q)}l_j^{(s-r-q)}}\Bigr).
\end{equation}
For instance, the expansion up to \(s=4\) can be written explicitly as
\begin{equation}\label{eq:PowerSeriesDeg4}
    \begin{aligned}
    \Phi_\gamma(z)&=\nVert{f}^2+z\cdot4\pAngle{f}{\sum_i l_il_i^{(1)}}+z^2\left(2\pAngle{f}{\sum_i 2l_il_i^{(2)}+[l_i^{(1)}]^2}+4\nVert{\sum_i l_il_i^{(1)}}^2\right)\\
    +&z^3\left(4\pAngle{f}{\sum_i l_il_i^{(3)}+l_i^{(1)}l_i^{(2)}}+4\pAngle{\sum_i l_il_i^{(1)}}{\sum_i 2l_il_i^{(2)}+\sum_i[l_i^{(1)}]^2}\right)\\
    +&z^4\left(2\pAngle{f}{\sum_i 2l_il_i^{(4)}+2l_i^{(1)}l_i^{(3)}+[l_i^{(2)}]^2}+8\pAngle{\sum_i l_il_i^{(1)}}{\sum_i l_il_i^{(3)}+l_i^{(1)}l_i^{(2)}}+\nVert{\sum_i 2l_il_i^{(2)}+[l_i^{(1)}]^2}^2\right)+\cdots.
    \end{aligned}
\end{equation}
When the difference \(f\) satisfies the first-order condition, then \(\pAngle{f}{\sum_il_il_i^{(t)}}=0\) for any \(\bdl^{(t)}\in R_1^k\).
If \(f\) further satisfies the second-order necessary condition and is sufficiently small, such that the kernel of the Hessian \(\bdl^{(1)}\mapsto 2\pAngle{f}{\sigma_k(\bdl^{(1)})}+4\nVert{\sum_il_il_i^{(1)}}^2\) is contained in \(\syzygy_1(\bdl)\), then either the second-order term is positive or \(\bdl^{(1)}\in\syzygy_1(\bdl)\).
In the latter case, it is possible that the third- or fourth-order coefficient in~\eqref{eq:PowerSeriesDeg4} becomes negative, which leads to the existence of spurious second-order stationary points that are not local minima.
Nevertheless, we identify the following sufficient condition for local minimality.
\begin{lemma}\label{lemma:LocalMinimumSufficientCondition}
    Let \(\bdl=(l_1,\dots,l_k)\in R_1^k\) be a spurious second-order stationary point for some target \(\bar{f}\in R_2\).
    If for any \(\bdh=(h_1,\dots,h_k)\in\syzygy_1(\bdl)\), the condition \(\pAngle{\sigma_k(\bdl)-\bar{f}}{\sigma_k(\bdh)}=0\) implies that \(h_i\in\sAngle{\bdl}_1\) for each \(i=1,\dots,k\), then \(\bdl\) is a spurious local minimum for some target in $R_2$.
\end{lemma}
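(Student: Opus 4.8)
The plan is to leave $\bar f$ aside after extracting structural data from it, and to establish local minimality for a target pulled towards $\sigma_k(\bdl)$. Write $f:=\sigma_k(\bdl)-\bar f$; the first-order condition gives $f\perp\sAngle{\bdl}_2$, and since $\bdl$ is not a global minimum, Lemma~\ref{lemma:NormalConesCharacterization} gives $f\notin\Sigma_X^*$. I would fix any $\epsilon\in(0,1)$, set $\bar g:=\sigma_k(\bdl)-\epsilon f$ and $\Phi(\bdl'):=\nVert{\sigma_k(\bdl')-\bar g}^2$, and check via Lemma~\ref{lemma:OptimalityConditions} that $\bdl$ is still a second-order stationary point of $\Phi$ that is not a global minimum: the first-order condition is inherited, $\epsilon f\notin\Sigma_X^*$, and the Hessian of $\Phi$ at $\bdl$ is $Q_\epsilon(\bdh):=4\nVert{\sum_i l_ih_i}^2+2\epsilon\pAngle{f}{\sigma_k(\bdh)}=(1-\epsilon)\cdot 4\nVert{\sum_i l_ih_i}^2+\epsilon Q(\bdh)\succeq 0$. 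The point of this reduction is that $\kernel Q_\epsilon$ is now the intersection of the two summands' kernels, namely $\{\bdh\in\syzygy_1(\bdl):\pAngle{f}{\sigma_k(\bdh)}=0\}$, which by the hypothesis is contained in $L:=\{\bdh\in R_1^k:h_i\in\sAngle{\bdl}_1\text{ for all }i\}$. It then suffices to show $\bdl$ is a local minimum of $\Phi$.

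The key point is that $L$ is harmless: if $\bdh\in L$ then $\sigma_k(\bdl+\bdh)-\sigma_k(\bdl)=2\sum_i l_ih_i+\sum_i h_i^2\in\sAngle{\bdl}_2\perp f$, so $\Phi(\bdl+\bdh)=\nVert{\sigma_k(\bdl+\bdh)-\sigma_k(\bdl)}^2+\epsilon^2\nVert{f}^2\ge\Phi(\bdl)$; more generally, since $\sum_i l_ih_i\perp f$ for all $\bdh$, one has $\Phi(\bdl+\bdh)-\Phi(\bdl)=\nVert{\sigma_k(\bdl+\bdh)-\sigma_k(\bdl)}^2+2\epsilon\pAngle{f}{\sigma_k(\bdh)}$, whose first term is nonnegative. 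Now suppose $\bdl$ is not a local minimum. Since $\Phi-\Phi(\bdl)$ is a polynomial vanishing at $\bdl$, the curve selection lemma yields a real-analytic curve $\gamma(s)=\bdl+\sum_{j\ge 1}s^j\bdl^{(j)}$ with $\Phi(\gamma(s))<\Phi(\bdl)$ for all small $s>0$; the goal is to show this forces every $\bdl^{(j)}\in L$, which is absurd, since then $\gamma(s)=(I+B(s))\bdl$ (writing $\bdl^{(j)}=B_j\bdl$, $B(s)=\sum_j B_js^j$) gives $\sigma_k(\gamma(s))-\sigma_k(\bdl)=\bdl^{\transpose}(B(s)+B(s)^{\transpose}+B(s)^{\transpose}B(s))\bdl\in\sAngle{\bdl}_2\perp f$ for all $s$, hence $\Phi(\gamma(s))-\Phi(\bdl)=\nVert{\sigma_k(\gamma(s))-\sigma_k(\bdl)}^2\ge 0$, a contradiction.

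I would prove ``$\bdl^{(r+1)}\in L$'' by induction on $r\ge 0$. Assume $\bdl^{(1)},\dots,\bdl^{(r)}\in L$ (vacuous for $r=0$) and write $\gamma(s)=(I+B_{\le r}(s))\bdl+\rho(s)$ with $B_{\le r}(s)=\sum_{j\le r}B_js^j$ and $\rho(s)=\sum_{j>r}s^j\bdl^{(j)}=O(s^{r+1})$. Since the entries of $(I+B_{\le r}(s))\bdl$ lie in $\sAngle{\bdl}_1$, expanding $\sigma_k$ gives $\sigma_k(\gamma(s))-\sigma_k(\bdl)=v(s)+\sigma_k(\rho(s))$ with $v(s)\in\sAngle{\bdl}_2$ (so $v(s)\perp f$) and $\sigma_k(\rho(s))=O(s^{2r+2})$, and also $\pAngle{f}{\sigma_k(\gamma(s)-\bdl)}=\pAngle{f}{\sigma_k(\rho(s))}$. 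Using the vanishing of the coefficients of $v(s)$ below order $s^{r+1}$ — forced by the earlier induction steps, where the relevant leading coefficient of $v$ was of the form $2\sum_i l_ib_i$ with $\bdb\in\kernel Q_\epsilon\subseteq\syzygy_1(\bdl)$, hence $\sum_i l_ib_i=0$ — one obtains $v(s)=2s^{r+1}\sum_i l_ib_i+O(s^{r+2})$ where $\bdb=\bdl^{(r+1)}+\bdc$ with $\bdc\in L$, the correction $\bdc$ absorbing the cross-products among $\bdl^{(1)},\dots,\bdl^{(r)}$ (which lie in $\sAngle{\bdl}_2=\image\diff_\bdl\sigma_k$). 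Therefore the first possibly nonzero Taylor coefficient of $\Phi(\gamma(s))-\Phi(\bdl)$ occurs at order $s^{2r+2}$ and equals
\[
    \bigl\|2\textstyle\sum_i l_ib_i\bigr\|^2+2\epsilon\pAngle{f}{\sigma_k(\bdl^{(r+1)})}=4\bigl\|\textstyle\sum_i l_ib_i\bigr\|^2+2\epsilon\pAngle{f}{\sigma_k(\bdb)}=Q_\epsilon(\bdb)\ge 0,
\]
where $\pAngle{f}{\sigma_k(\bdl^{(r+1)})}=\pAngle{f}{\sigma_k(\bdb)}$ because $\bdb-\bdl^{(r+1)}\in L$. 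Since $\Phi(\gamma(s))<\Phi(\bdl)$ forces this coefficient to be nonpositive, it must vanish, so $\bdb\in\kernel Q_\epsilon\subseteq L$ and hence $\bdl^{(r+1)}=\bdb-\bdc\in L$, closing the induction.

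The step I expect to be the main obstacle is this last one: the explicit power-series bookkeeping showing that the first nonzero coefficient of $\Phi(\gamma(s))-\Phi(\bdl)$ collapses, after absorbing the $L$-valued cross-products, exactly into $Q_\epsilon$ of a vector congruent to $\bdl^{(r+1)}$ modulo $L$ — this is precisely where the hypothesis (through $\kernel Q_\epsilon\subseteq L$) and the first- and second-order conditions are used. A direct ``for every small $\bdh$'' estimate is not available, because $\Phi(\bdl+\bdh)-\Phi(\bdl)$ is not coercive transverse to $\kernel Q_\epsilon$; curve selection circumvents this by reducing the question to the rigid statement that a hypothetical descent curve must be tangent to $L$ to all orders.
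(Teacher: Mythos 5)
Your proposal is correct and follows essentially the same route as the paper's proof: rescale the target toward $\sigma_k(\bdl)$ so that the Hessian's kernel lands in $\syzygy_1(\bdl)$ (and hence, by hypothesis, in $L$), invoke the Nash curve selection lemma on a hypothetical descent curve, and run the same power-series induction showing that each $\bdl^{(r+1)}$ must lie in $L$ because the first possibly nonzero coefficient, at order $s^{2r+2}$, collapses to the Hessian evaluated at $\bdl^{(r+1)}$ corrected by an $L$-valued absorption of the cross terms. The only differences are presentational (explicit matrix notation $B_j$ and the $\epsilon$-retargeting in place of the paper's "scaling of $f$").
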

\begin{proof}
    By scaling \(f:=\sigma_k(\bdl)-\bar{f}\), we may assume that the kernel of the Hessian is contained in \(\syzygy_1(\bdl)\), without loss of generality. 
    Assume for contradiction that \(\bdl\) is not a local minimum.
    Note that the level set \(\Lambda:=\{\bdh\in R_1^k:\nVert{\sigma_k(\bdh)-\bar{f}}^2<\nVert{\sigma_k(\bdl)-\bar{f}}^2\}\) is a basic semialgebraic set with \(\bdl\in\clos\Lambda\) by the continuity of the objective function.
    Thus by the Nash curve selection lemma~\cite[Proposition 8.1.13]{bochnak2013real}, there exists a Nash mapping \(\gamma:(-1,1)\to R_1^k\) such that \(\gamma(0)=\bdl\) and \(\gamma(z)\in\Lambda\) for any \(0<z<1\).
    Thus with a power series representation of \(\gamma\) (as in~\eqref{eq:PowerSeriesExpansion}), the associated objective function \(\Phi_\gamma\) (in~\eqref{eq:PowerSeriesObj}) must have a negative initial coefficient (i.e., the coefficient of the smallest-degree nonzero term).
    To simplify the notation for \(\Phi_\gamma\), let \(g^{(s)}:=\sum_{t=0}^{s}\sum_{i=1}^{k}l_i^{(t)}l_i^{(s-t)}\in R_2\) denote the degree-\(s\) coefficient in the expansion of the sum-of-squares curve~\eqref{eq:PowerSeriesSOS} for each \(s\ge1\), so that the power series~\eqref{eq:PowerSeriesObj} can be written as
    \begin{equation*}
        \Phi_\gamma(z)=\nVert{f}^2+\sum_{s=1}^{\infty}\left(2\pAngle{f}{g^{(s)}}+\sum_{r=1}^{s-1}\pAngle{g^{(r)}}{g^{(s-r)}}\right) z^s.
    \end{equation*}
    We claim that by our assumption, if the coefficients up to degree \(2t\) are all zero in the series~\eqref{eq:PowerSeriesObj} for some \(t\ge1\), then 
    \begin{enumerate}
        \item \(l_i^{(s)}\in\sAngle{\bdl}_1\) for any \(i=1,\dots,k\) and \(s=1,\dots,t\);
        \item \(g_i^{(s)}=0\) for \(s=1,\dots,t\);
        \item the degree-\((2t+1)\) coefficient is zero; and
        \item the degree-\((2t+2)\) coefficient is either positive or zero.
    \end{enumerate}
    The claim gives the desired contradiction as the initial coefficient must be positive.
    To show the claim, we argue by induction on \(t\) as follows.
    For \(t=1\), this is clear as \(f\) satisfies the second-order optimality condition with the kernel of the Hessian contained in \(\syzygy_1(\bdl)\) by construction.
    Suppose the claim is true for some \(t\ge1\).
    Then the degree-\((2t+1)\) coefficient in~\eqref{eq:PowerSeriesObj} becomes
    \begin{equation*}
        2\pAngle{f}{g^{(2t+1)}}+\sum_{s=1}^{2t}\pAngle{g^{(s)}}{g^{(2t+1-s)}}=0,
    \end{equation*}
    because \(g^{(2t+1)}\in\sAngle{\bdl}_2\perp f\), and \(g^{(s)}=0\) for \(s=1,\dots,t\).
    The degree-\((2t+2)\) coefficient in~\eqref{eq:PowerSeriesObj} can be written as
    \begin{equation*}
        2\pAngle{f}{g^{(2t+2)}}+\sum_{s=1}^{2t+1}\pAngle{g^{(s)}}{g^{(2t+2-s)}}=2\pAngle{f}{\sigma_k(\bdl^{(t+1)})}+\nVert{g^{(t+1)}}^2.
    \end{equation*}
    Here,
    \begin{equation*}
        g^{(t+1)}=\sum_{s=0}^{t+1}\sum_{i=1}^{k}l_i^{(s)}l_i^{(t+1-s)}=2\sum_{i=1}^{k}l_il_i^{(t+1)}+\sum_{r=1}^{t}\sum_{i=1}^{k}l_i^{(r)}l_i^{(t+1-r)},
    \end{equation*}
    so by the induction hypothesis \(\sum_{r=1}^{t}\sum_{i=1}^{k}l_i^{(r)}l_i^{(t+1-r)}\in\sAngle{\bdl}^2\).
    Consequently, there exist \(h_i^{(t+1)}\in\sAngle{\bdl}_1\), \(i=1,\dots,k\), such that \(g^{(t+1)}=\sum_{i=1}^{k}2l_i(l_i^{(t+1)}+h_i^{(t+1)})\).
    As \(f\perp\sAngle{\bdl}_2\), denoting \(\bdh^{(t+1)}:=(h_1^{(t+1)},\dots,h_k^{(t+1)})\in R_1^k\), we have
    \begin{equation*}
        2\pAngle{f}{g^{(2t+2)}}+\sum_{s=1}^{2t+1}\pAngle{g^{(s)}}{g^{(2t+2-s)}}=2\pAngle{f}{\sigma_k(\bdl^{(t+1)}+\bdh^{(t+1)})}+\nVert{\sum_{i=1}^{k}2l_i(l_i^{(t+1)}+h_i^{(t+1)})}^2.
    \end{equation*}
    Now by the construction of \(f\), this degree-\((2t+2)\) coefficient is either positive, or zero which happens only when \(\pAngle{f}{\sigma_k(\bdl^{(t+1)}+\bdh^{(t+1)})}=0\) and \(\bdl^{(t+1)}+\bdh^{(t+1)}\in\syzygy_1(\bdl)\), and this ensures that \(l_i^{(t+1)}\in\sAngle{\bdl}_1\) for each \(i=1,\dots,k\), and \(g^{(t+1)}=0\) by the definition of \(\bdh^{(t+1)}\), completing the induction step.
\end{proof}

\begin{proof}[Proof of Theorem~\ref{thm:CharacterizationSpuriousMinima}]
    To show (i)$\implies$(ii), take a target \(\bar{f}\in R_2\) such that \(\bdl\) is a spurious second-order stationary point, and let \(g:=\sigma_k(\bdl)-\bar{f}\).
    This means that \(\bdl\) is not a global minimum, so \(-g\notin \calN_{\sigma_k(\bdl)}=-\Sigma_X^*\cap\sAngle{\bdl}_2^\perp\) by Lemma~\ref{lemma:NormalConesCharacterization}.
    By the first-order stationary condition, we have \(\pAngle{g}{\diff_\bdl\sigma_k(\bdh)}=0\) for any \(\bdh\in R_1^k\), so \(g\in\sAngle{\bdl}_2^\perp\).
    Moreover, by the second-order stationary condition, for any \(\bdh\in\syzygy_1(\bdl)\), we see that \(\pAngle{g}{\sigma_k(\bdh)}\ge0\).
    Here, equality holds only if the differential of the quadratic map \(\bdh'\mapsto\pAngle{g}{\sigma_k(\bdh')}\) is zero at \(\bdh\), which means that \(\bdh\) is in the kernel of this quadratic map and hence \(g\perp\sAngle{\bdh}_2\).
    Thus \(g\) is the desired direction in (ii).
    
    Next we show (ii)$\implies$(i).
    By Lemma~\ref{lemma:NormalConesCharacterization}, the chosen \(g\) satisfies \(g\in [-(\sigma_k(\syzygy_1(\bdl)))^*\cap\sAngle{\bdl}_2^\perp]\setminus \calN_{\sigma_k(\bdl)}\).
    Note that if we let \(\bar{f}:=\sigma_k(\bdl)-\epsilon g\) for some \(\epsilon\in\bbR_{>0}\), then \(\bdl\) is a first-order stationary point of~\eqref{eq:SOSMinimizationProblem}, and the Hessian can be written as \(\phi+\epsilon\psi\), where \(\phi(\bdh):=4\nVert{\sum_{i=1}^{k}l_ih_i}^2\) and \(\psi(\bdh):=2\pAngle{g}{\sigma_k(\bdh)}\).
    By assumption, \(\psi\) is positive semidefinite on \(\kernel{\phi}=\syzygy_1(\bdl)\) and the zeros of \(\psi\) in \(\kernel{\phi}\) are contained in its kernel \(\kernel{\psi}\).
    By taking the quotient of the common kernel \(\kernel{\phi}\cap\kernel{\psi}\), we may assume from this that \(\psi\) is positive definite on \(\kernel{\phi}\).
    Thus from continuity of quadratic forms, this means that \(\phi+\delta\psi\) is positive semidefinite for any sufficiently small \(\delta\in\bbR_{>0}\).
    Thus we know that \(\bdl\) is a second-order stationary point for \(\bar{f}\) with such sufficiently small \(\epsilon\).
    However, since \(g\notin \calN_{\sigma_k(\bdl)}\), there exists \(f\in \interior \calT_{\sigma_k(\bdl)}\) such that \(\pAngle{\sigma_k(\bdl)-\bar{f}}{f}=-c\pAngle{g}{f}<0\), which implies that \(\bdl\) is not a global minimum of~\eqref{eq:SOSMinimizationProblem}.

    Finally we assume that \(\sAngle{\bdl}\subseteq R\) is real radical.
    By Lemma~\ref{lemma:RealRadicalTangentConeCondition}, we see that the only-if condition in (ii) is always satisfied.
    It remains to show (ii)$\implies$(iii), since (iii)$\implies$(i) is trivial.
    In the proof of Lemma~\ref{lemma:RealRadicalTangentConeCondition}, we pick \(g\perp\sAngle{\bdl}_2\), \(g\notin\Sigma_X^*\) such that for any \(\bdh=(h_1,\dots,h_k)\in\syzygy_1(\bdl)\), if \(\pAngle{g}{\sigma_k(\bdh)}=0\), then \(h_1,\dots,h_k\in\sAngle{\bdl}_1\).
    Moreover, by rescaling \(g\) if necessary, we may assume that the quadratic map \(\bdh\mapsto2\pAngle{g}{\sigma_k(\bdl)}+4\nVert{\sum_{i=1}^{k}l_ih_i}^2\) is positive semidefinite with its kernel contained in \(\syzygy_1(\bdl)\).
    Therefore Lemma~\ref{lemma:LocalMinimumSufficientCondition} ensures that \(\bdl\) is a spurious local minimum for the chosen \(g\).
\end{proof}

We remark that condition (ii) in Theorem~\ref{thm:CharacterizationSpuriousMinima} does not depend on the choice of the inner product \(\pAngle{\cdot}{\cdot}\) on \(R_2\), as the statement only involves the linear functional \(\pAngle{g}{\cdot}:R_2\to\bbR\) instead of \(g\in R_2\) itself.
Thus it simplifies our discussion as we may pick certain inner product, or coordinates of \(\bbP^n\) in the constructions of examples (e.g., Example~\ref{ex:TernaryQuarticsBoundaryPoint}).
Below we mention some useful consequences of Theorem~\ref{thm:CharacterizationSpuriousMinima}.
In particular, Corollary~\ref{cor:LinearDependenceNotLocalMinimum} resembles condition (d) in~\cite[Theorem 4.1]{burer2005local}.

\begin{corollary}\label{cor:LinearDependenceNotLocalMinimum}
     If \(l_1,\dots,l_k\) are linearly dependent, then \(\bdl=(l_1,\dots,l_k)\in R_1^k\) is not a spurious second-order stationary point.
\end{corollary}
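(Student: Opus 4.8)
The plan is to read the statement through the lens of Theorem~\ref{thm:CharacterizationSpuriousMinima}: by the equivalence of conditions~(i) and~(ii) there, $\bdl$ fails to be a spurious second-order stationary point \emph{for every} target $\bar f$ precisely when condition~(ii) admits no witness. So it suffices to show that, when $l_1,\dots,l_k$ are linearly dependent, there is no $g\in R_2$ with $g\perp\sAngle{\bdl}_2$, $g\notin\Sigma_X^*$, and $\pAngle{g}{\sigma_k(\bdh)}\ge 0$ for every $\bdh\in\syzygy_1(\bdl)$.

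First I would use the linear dependence to fix a nonzero tuple $(c_1,\dots,c_k)\in\bbR^k$ with $\sum_{i=1}^k c_i l_i=0$. The key observation is that multiplying this relation by an arbitrary linear form manufactures a whole family of syzygies: for each $m\in R_1$ the tuple $\bdh_m:=(c_1 m,\dots,c_k m)$ lies in $\syzygy_1(\bdl)$, since $\sum_{i=1}^k l_i(c_i m)=m\sum_{i=1}^k c_i l_i=0$; and for these syzygies one computes $\sigma_k(\bdh_m)=\bigl(\sum_{i=1}^k c_i^2\bigr)m^2=Cm^2$ with $C:=\sum_{i=1}^k c_i^2>0$.

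Now suppose for contradiction that some $g$ as in condition~(ii) exists. Applying the inequality $\pAngle{g}{\sigma_k(\bdh)}\ge 0$ to the syzygies $\bdh=\bdh_m$ yields $C\pAngle{g}{m^2}\ge 0$, hence $\pAngle{g}{m^2}\ge 0$ for every $m\in R_1$. Since $\Sigma_X$ is by definition the convex cone generated by the squares $\{m^2:m\in R_1\}$, this is exactly the assertion $g\in\Sigma_X^*$, contradicting $g\notin\Sigma_X^*$. Therefore no such $g$ exists, condition~(ii) fails, and $\bdl$ is not a spurious second-order stationary point.

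I expect essentially no obstacle: the argument uses only the inequality part of condition~(ii) (never the equality clause), and the identification $\Sigma_X^*=\{g\in R_2:\pAngle{g}{m^2}\ge 0\text{ for all }m\in R_1\}$ is immediate from the definition of $\Sigma_X$. The one point worth flagging is the choice of the \emph{right} syzygies: the Koszul syzygies $h_i=l_j$, $h_j=-l_i$ are always present but carry no information, since $\sigma_k$ sends them into $\sAngle{\bdl}_2$, which is perpendicular to $g$; it is crucial instead to use the syzygies $\bdh_m$ coming from the genuine linear relation, because their images under $\sigma_k$ sweep out all of $\Sigma_X$ and thereby force $g$ into $\Sigma_X^*$.
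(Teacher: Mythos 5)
Your proposal is correct and follows essentially the same route as the paper: both exploit the linear relation $\sum_i c_i l_i=0$ to produce the syzygies $(c_1m,\dots,c_km)$ whose images under $\sigma_k$ are positive multiples of all squares $m^2$, and then invoke Theorem~\ref{thm:CharacterizationSpuriousMinima} to rule out condition~(ii). The paper states this as $\clos\conv\calR_\bdl=\calT_{\sigma_k(\bdl)}$ while you phrase it as forcing $g\in\Sigma_X^*$, but these are the same argument.
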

\begin{proof}
    By assumption, there exist \(c_1,\dots,c_k\in\bbR\) such that \(\sum_{i=1}^k c_i^2=1\) and \(\sum_{i=1}^k c_il_i=0\).
    Take any \(g\in R_1\) and let \(\bdh:=(h_1,\dots,h_k)=(c_1g,\dots,c_kg)\), we have \(\bdh\in\syzygy_1(\bdl)\) and \(\sigma_k(\bdh)=g^2\).
    This shows that \(\conv\sigma_k(\syzygy_1(\bdl))=\Sigma_X\) and thus \(\clos\conv\calR_\bdl=\calT_{\sigma_k(\bdl)}\).
    By Theorem~\ref{thm:CharacterizationSpuriousMinima}, we know that \(\bdl\) cannot be a spurious second-order stationary point for any \(\bar{f}\in R_2\).
\end{proof}

\begin{corollary}\label{cor:LocalMinimaOrthogonalTransform}
    Let \(O\in\bbR^{k\times k}\) be an orthogonal matrix.
    For any \(\bdl:=(l_1,\dots,l_k)\in R_1^k\), \(O\bdl\) is a spurious second-order stationary point if and only if so is \(\bdl\).
\end{corollary}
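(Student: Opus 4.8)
The plan is to show that the orthogonal transformation $O$ preserves every ingredient appearing in condition (ii) of Theorem~\ref{thm:CharacterizationSpuriousMinima}, so that a witness $g$ works for $\bdl$ if and only if it works for $O\bdl$. Since $O$ is invertible and $O^{-1} = O^\transpose$ is again orthogonal, it suffices to prove one direction; I would argue that the data attached to $O\bdl$ equals the data attached to $\bdl$.

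First I would record the two basic algebraic identities. Writing $O\bdl = (l'_1,\dots,l'_k)$ with $l'_i = \sum_j O_{ij} l_j$, each $l'_i$ is an $R_1$-linear combination of the $l_j$, and conversely (using $O^\transpose$) each $l_j$ is a combination of the $l'_i$; hence the ideals coincide, $\sAngle{O\bdl} = \sAngle{\bdl}$, and in particular $\sAngle{O\bdl}_2 = \sAngle{\bdl}_2$. Next, because $O$ is orthogonal, $\sigma_k(O\bdh) = \sum_i (\sum_j O_{ij} h_j)^2 = \sum_{j,j'} (\sum_i O_{ij} O_{ij'}) h_j h_{j'} = \sum_j h_j^2 = \sigma_k(\bdh)$ for every $\bdh \in R_1^k$; that is, $\sigma_k \circ O = \sigma_k$. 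Consequently $\diff_{O\bdl}\sigma_k \circ O = \diff_\bdl \sigma_k$, so $O$ carries $\kernel(\diff_\bdl\sigma_k) = \syzygy_1(\bdl)$ bijectively onto $\syzygy_1(O\bdl)$, and for $\bdh \in \syzygy_1(\bdl)$ one has $\sigma_k(O\bdh) = \sigma_k(\bdh)$ and $\sAngle{O\bdh} = \sAngle{\bdh}$.

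Now I would verify condition (ii) is transported. Suppose $g \in R_2$ witnesses (ii) for $\bdl$: $g \perp \sAngle{\bdl}_2$, $g \notin \Sigma_X^*$, and $\pAngle{g}{\sigma_k(\bdh)} \ge 0$ for all $\bdh \in \syzygy_1(\bdl)$, with equality forcing $g \perp \sAngle{\bdh}_2$. I claim the same $g$ witnesses (ii) for $O\bdl$. Orthogonality to $\sAngle{O\bdl}_2 = \sAngle{\bdl}_2$ and the condition $g \notin \Sigma_X^*$ are unchanged. Given any $\bdh' \in \syzygy_1(O\bdl)$, write $\bdh' = O\bdh$ with $\bdh = O^\transpose \bdh' \in \syzygy_1(\bdl)$; then $\pAngle{g}{\sigma_k(\bdh')} = \pAngle{g}{\sigma_k(\bdh)} \ge 0$, and if this is zero then $g \perp \sAngle{\bdh}_2 = \sAngle{O\bdh}_2 = \sAngle{\bdh'}_2$. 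Hence $g$ satisfies (ii) for $O\bdl$, and by Theorem~\ref{thm:CharacterizationSpuriousMinima}, $O\bdl$ is a spurious second-order stationary point for some target. Applying the same argument with $O$ replaced by $O^\transpose$ gives the converse.

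The proof is entirely formal; the only point requiring a moment's care is the computation $\sigma_k \circ O = \sigma_k$, which is exactly where orthogonality of $O$ is used (for a general invertible $O$ the sum-of-squares map changes, and the statement would fail). Everything else is bookkeeping with the identities $\sAngle{O\bdl} = \sAngle{\bdl}$ and $\sigma_k(O\bdh) = \sigma_k(\bdh)$ fed through condition (ii). So I do not anticipate a genuine obstacle here; if anything, the subtlety is purely notational in keeping track of which tuple each syzygy belongs to.
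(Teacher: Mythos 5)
Your proposal is correct and follows essentially the same route as the paper: both establish $\sAngle{O\bdl}_2=\sAngle{\bdl}_2$, that $O$ carries $\syzygy_1(\bdl)$ onto $\syzygy_1(O\bdl)$, and that $\sigma_k(O\bdh)=\sigma_k(\bdh)$ with $\sAngle{O\bdh}_2=\sAngle{\bdh}_2$, then transport condition (ii) of Theorem~\ref{thm:CharacterizationSpuriousMinima}. The only cosmetic difference is that the paper verifies $O\bdh\in\syzygy_1(O\bdl)$ directly via $(O\bdh)^\transpose(O\bdl)=\bdh^\transpose\bdl=0$, whereas you derive it from $\sigma_k\circ O=\sigma_k$ and the chain rule; both are fine.
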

\begin{proof}
    It is easy to see that \(\sAngle{\bdl}_2=\sAngle{O\bdl}_2\).
    Take any \(\bdh\in\syzygy_1(\bdl)\).
    Then \(O\bdh\in\syzygy_1(O\bdl)\) because   $(O\bdh)^\transpose (O \bdl) = \bdh^\transpose \bdl = 0$.
    Since \(\sigma_{k}(O\bdh)=\sigma_{k}(\bdh)\) and \(\sAngle{\bdh}_2=\sAngle{O\bdh}_2\), from the second condition in Theorem~\ref{thm:CharacterizationSpuriousMinima}, we see that \(O\bdl\) is a spurious second-order stationary point if and only if so is \(\bdl\).
\end{proof}

\begin{corollary}\label{cor:NoSpuriousMinimaWithMoreSquares}
    Fix a variety \(X\subset\bbP^n\).
    Suppose \(\bdl:=(l_1,\dots,l_k)\in R_1^k\) is not a spurious second-order stationary point, then neither is \(\bdl':=(l_1,\dots,l_k,l_{k+1})\in R_1^{k+1}\) for any \(l_{k+1}\in R_1\).
\end{corollary}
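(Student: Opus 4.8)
The plan is to prove the contrapositive: if $\bdl':=(l_1,\dots,l_k,l_{k+1})$ is a spurious second-order stationary point, then so is $\bdl:=(l_1,\dots,l_k)$. I will do this by transporting the witness $g$ supplied by condition~(ii) of Theorem~\ref{thm:CharacterizationSpuriousMinima} for $\bdl'$ into a witness of the same condition for $\bdl$.

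First I would record three elementary compatibilities between the $(k+1)$-tuple and its truncation. Since the ideal generated by $l_1,\dots,l_k$ is contained in the ideal generated by $l_1,\dots,l_k,l_{k+1}$, we have $\sAngle{\bdl}_2\subseteq\sAngle{\bdl'}_2$. Next, for any $\bdh=(h_1,\dots,h_k)\in\syzygy_1(\bdl)$, appending a zero gives $\bdh':=(h_1,\dots,h_k,0)\in\syzygy_1(\bdl')$, and moreover $\sigma_{k+1}(\bdh')=\sigma_k(\bdh)$ while $\sAngle{\bdh'}_2=\sAngle{\bdh}_2$, because appending the zero form does not change the generated ideal. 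Finally, $\Sigma_X^*$ does not depend on $\bdl$ at all.

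Then I would assemble the argument. Let $g\in R_2$ be the element provided by Theorem~\ref{thm:CharacterizationSpuriousMinima}(ii) applied to $\bdl'$: $g\perp\sAngle{\bdl'}_2$, $g\notin\Sigma_X^*$, and $\pAngle{g}{\sigma_{k+1}(\bdh')}\ge0$ for every $\bdh'\in\syzygy_1(\bdl')$, with equality forcing $g\perp\sAngle{\bdh'}_2$. By the first compatibility, $g\perp\sAngle{\bdl}_2$, and $g\notin\Sigma_X^*$ is unchanged. For any $\bdh\in\syzygy_1(\bdl)$, apply the hypothesis to the padded syzygy $\bdh'$: we get $\pAngle{g}{\sigma_k(\bdh)}=\pAngle{g}{\sigma_{k+1}(\bdh')}\ge0$, and if this vanishes then $g\perp\sAngle{\bdh'}_2=\sAngle{\bdh}_2$. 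Hence $g$ verifies condition~(ii) for $\bdl$, so Theorem~\ref{thm:CharacterizationSpuriousMinima} shows $\bdl$ is a spurious second-order stationary point, which is the desired contrapositive. (Since $\bdl$ and $\bdl'$ play symmetric roles in this passage only through the inclusion $\sAngle{\bdl}_2\subseteq\sAngle{\bdl'}_2$ and the padding map on syzygies, no appeal to the real-radical part of Theorem~\ref{thm:CharacterizationSpuriousMinima} is needed.)

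There is essentially no obstacle: the statement is a monotonicity property, and the only thing to verify is that adjoining a square with a zero linear form leaves $\syzygy_1$, $\sigma$, the degree-two part of the generated ideal, and $\Sigma_X^*$ either unchanged or enlarged in the right direction. The single point deserving a line of care is the equality case of condition~(ii), where one must use $\sAngle{(h_1,\dots,h_k,0)}_2=\sAngle{(h_1,\dots,h_k)}_2$ to conclude $g\perp\sAngle{\bdh}_2$ from $g\perp\sAngle{\bdh'}_2$.
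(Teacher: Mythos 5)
Your proof is correct and is essentially the paper's argument stated in contrapositive form: both rest on the same three facts ($\sAngle{\bdl}_2\subseteq\sAngle{\bdl'}_2$, zero-padding maps $\syzygy_1(\bdl)$ into $\syzygy_1(\bdl')$ preserving $\sigma$ and the generated ideal, and $\Sigma_X^*$ is unchanged), combined with the equivalence (i)$\Leftrightarrow$(ii) of Theorem~\ref{thm:CharacterizationSpuriousMinima}. Your handling of the equality case via $\sAngle{(h_1,\dots,h_k,0)}_2=\sAngle{(h_1,\dots,h_k)}_2$ matches the paper exactly.
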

\begin{proof}
    From condition (ii) in Theorem~\ref{thm:CharacterizationSpuriousMinima}, for any \(g\in\calR_\bdl^*\setminus \calT_{\sigma_k(\bdl)}^*\), there exists \(\bdh\in\syzygy_1(\bdl)\) with \(\pAngle{g}{\sigma_k(\bdh)}=0\) such that \(g\not\perp\sAngle{\bdh}_2\).
    Note that \(\sAngle{\bdl}_2\subseteq\sAngle{\bdl'}_2\).
    Moreover, for any \(\bdh=(h_1,\dots,h_k)\in\syzygy_1(\bdl)\), \(\bdh':=(h_1,\dots,h_k,0)\in\syzygy_1(\bdl')\), so \(\sigma_k(\syzygy_1(\bdl))\subseteq\sigma_{k+1}(\syzygy_1(\bdl'))\).
    Thus \(\calR_\bdl=\clos\conv(\sAngle{\bdl}_2+\sigma_k(\syzygy_1(\bdl)))\subseteq\clos\conv(\sAngle{\bdl'}_2+\sigma_k(\syzygy_1(\bdl')))=\calR_{\bdl}\), or equivalently, \(\calR_\bdl^*\supseteq\calR_{\bdl'}^*\).
    Therefore, for any \(g\in\calR_{\bdl'}^*\setminus \calT_{\sigma_{k+1}(\bdl')}^*\), the syzygy \(\bdh\in\syzygy_1(\bdl)\) with \(\pAngle{g}{\sigma_k(\bdh)}=0\) such that \(g\not\perp\sAngle{\bdh}_2\), can be extended to \(\bdh':=(h_1,\dots,h_k,0)\in\syzygy_1(\bdl')\) satisfying \(\pAngle{g}{\sigma_{k+1}(\bdh')}=0\) and \(g\not\perp\sAngle{\bdh'}_2=\sAngle{\bdh}_2\).
    This shows that \(\bdl'\) is not a spurious second-order stationary point.
\end{proof}

\section{Spurious local minima on varieties of minimal degree}

In this section, we focus on varieties of minimal degree and prove Theorems~\ref{thm:ExistenceSpuriousMinima}, \ref{thm:SurfacesOfMinimalDegree}, and \ref{thm:VeroneseSurfaceMinima}.
A useful property regarding varieties of minimal degree is that they are arithmetically Cohen-Macaulay, which implies that for \(k=\dim(X)+1\), whenever the linear forms \(l_1,\dots,l_k\) do not have a common zero on \(X\), then \(\sAngle{l_1,\dots,l_k}=R_2\)~\cite[Lemma 2.2]{blekherman2019low}.
In particular, this implies that the Jacobian matrix \(\diff_\bdl\sigma_k\) has full rank for any tuple of linear forms \(\bdl=(l_1,\dots,l_k)\) that share no common zero on \(X\), and thus \(\bdl\) cannot be a (first-order) stationary point for any choice of \(\bar{f}\).
We illustrate the benefit of studying the syzygies by the following lemma for rational normal curves, which reproduces and slightly simplifies the argument used in~\cite{legat2023low}.
\begin{lemma}\label{lemma:RationalNormalCurves}
    Let \(X=\nu_n(\bbP^1)\subset\bbP^n\) be a rational normal curve.
    Then there is no spurious second-order stationary point in Problem~\eqref{eq:SOSMinimizationProblem}.
\end{lemma}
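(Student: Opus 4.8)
The plan is to prove the statement for the rank $k=2$ formulation (the case treated in~\cite{legat2023low}); since no $2$-tuple will turn out to be a spurious second-order stationary point, neither is any longer tuple, by induction from Corollary~\ref{cor:NoSpuriousMinimaWithMoreSquares}. Identify $R_1$ with the binary forms $\bbR[s,t]_n$ and $R_2$ with $\bbR[s,t]_{2n}$, so that $\Sigma_X$ is the cone of nonnegative binary forms of degree $2n$, and fix $\bdl=(l_1,l_2)\in R_1^2$. If $l_1,l_2$ have no common zero on $X$, then $\sAngle{\bdl}_2=R_2$ by the arithmetic Cohen--Macaulay property recalled above, so any first-order stationary $\bdl$ has $\sigma_2(\bdl)=\bar f\in\Sigma_X$ and is a global minimum; and if $l_1,l_2$ are linearly dependent, Corollary~\ref{cor:LinearDependenceNotLocalMinimum} applies. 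So I may assume $l_1,l_2$ are linearly independent with a common (complex) zero; write $d:=\gcd(l_1,l_2)$, $c:=\deg d\in\{1,\dots,n-1\}$, $\tilde l_i:=l_i/d$ (coprime over $\bbC$), $m:=n-c=\deg\tilde l_i$, and $P:=\tilde l_1^2+\tilde l_2^2\in\bbR[s,t]_{2m}$, which is strictly positive on $\bbP^1(\bbR)$ because $\tilde l_1,\tilde l_2$ have no common real zero. The goal is to show that Condition~(ii) of Theorem~\ref{thm:CharacterizationSpuriousMinima} fails for every such $\bdl$.

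First I would record the relevant algebra. A dimension count on the map $(a,b)\mapsto\tilde l_1a+\tilde l_2b$, whose kernel is $\{(\tilde l_2v,-\tilde l_1v):v\in\bbR[s,t]_c\}$ by coprimality, shows $\tilde l_1\bbR[s,t]_n+\tilde l_2\bbR[s,t]_n=\bbR[s,t]_{n+m}$; hence $\sAngle{\bdl}_2=d\cdot\bbR[s,t]_{2n-c}=(d)_{2n}$, while $\syzygy_1(\bdl)=\{(\tilde l_2v,-\tilde l_1v):v\in\bbR[s,t]_c\}$, so that $\sigma_2(\bdh)=v^2P$ and $\sAngle{\bdh}_2=v\cdot\bbR[s,t]_{2n-c}$ for $\bdh=(\tilde l_2v,-\tilde l_1v)$. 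As Condition~(ii) depends only on the linear functional $\pAngle{g}{\cdot}$ (the remark after Theorem~\ref{thm:CharacterizationSpuriousMinima}), it is unchanged by a linear change of coordinates on $\bbP^1$, so I may also assume $[1\mathcolon0]\notin V(d)$, i.e.\ $d(s):=d(s,1)$ has degree $c$. Now pass to the Artinian algebra $A:=\bbR[s]/(d(s))$ with $\dim_\bbR A=c$: for $\delta\ge c$, specializing $t\mapsto1$ identifies $(\bbR[s,t]/(d))_\delta$ with $A$ compatibly with multiplication, and $\bbR[s,t]_\delta$ surjects onto it. A functional $g$ with $g\perp\sAngle{\bdl}_2$ thus descends to $\bar L\colon A\to\bbR$; writing $\Lambda(x,y):=\bar L(xy)$ and $\Lambda_{\bar P}(x,y):=\bar L(xy\bar P)$ for the induced symmetric $\bbR$-bilinear forms on $A$ (with $\bar P$ the class of $P$), one checks that (a) $g\in\Sigma_X^*$ iff $\Lambda\succeq0$; (b) the inequality part of~(ii) says $\Lambda_{\bar P}\succeq0$; and (c) its equality clause says $\ker\Lambda_{\bar P}\subseteq\operatorname{rad}\bar L:=\{a\in A:\bar L(aA)=0\}$. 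It remains to show that (b) and (c) force $\Lambda\succeq0$, contradicting $g\notin\Sigma_X^*$.

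For this I would decompose $A=\prod_iA_i$ over the points of $V(d)$; since $\Lambda$, $\Lambda_{\bar P}$, $\bar L$ and $\operatorname{rad}\bar L$ all split as direct sums over $i$, it suffices to treat a single local factor. If $A_i\cong\bbR[\varepsilon]/(\varepsilon^e)$ corresponds to a real zero $\rho$ of $d$, then the constant term of $\bar P$ in $A_i$ is $P(\rho)=\tilde l_1(\rho)^2+\tilde l_2(\rho)^2>0$, so $\bar P$ is a unit of $A_i$ admitting a square root (the binomial series terminates on the nilpotent part); hence $\Lambda_{\bar P}$ and $\Lambda$ are congruent on $A_i$ and PSD-ness transfers directly. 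If $A_i\cong\bbC[\varepsilon]/(\varepsilon^e)$ corresponds to a conjugate pair of zeros, then $A_i\supseteq\bbC$, which is not formally real, and I claim any PSD symmetric $\bbR$-bilinear form $(x,y)\mapsto\mu(xy)$ on $A_i$ vanishes: the elements $1=1^2$, $-1=i^2$, $i=\bigl(\tfrac{1+i}{\sqrt2}\bigr)^2$, $-i=\bigl(\tfrac{1-i}{\sqrt2}\bigr)^2$ all lie in $\{a^2:a\in A_i\}$, so PSD-ness forces $\mu(1)=\mu(i)=0$, hence $\mu|_\bbC=0$; then $1$ is isotropic, hence in the radical of the form, so $\mu\equiv0$. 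Applying this to $\Lambda_{\bar P}|_{A_i}$ gives $\Lambda_{\bar P}|_{A_i}=0$, so $\ker\Lambda_{\bar P}|_{A_i}=A_i$, and the equality clause (c) then forces $\bar L|_{A_i}=0$, whence $\Lambda|_{A_i}=0\succeq0$. Either way $\Lambda|_{A_i}\succeq0$; therefore $\Lambda\succeq0$ and $g\in\Sigma_X^*$, so Condition~(ii) cannot hold and $\bdl$ is not a spurious second-order stationary point.

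The explicit syzygy description and the degreewise identifications with $A$ are routine bookkeeping. The genuinely delicate step---which I expect to be the main obstacle---is the complex-conjugate-pair local factors: there $\bar P$ can be a zero-divisor, so the second-order condition by itself is too weak, and one must bring in the equality clause of Theorem~\ref{thm:CharacterizationSpuriousMinima}; it is exactly strong enough precisely because the residue field $\bbC$ at such a point is not formally real, which collapses every admissible $\Lambda_{\bar P}|_{A_i}$ to $0$.
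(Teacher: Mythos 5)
Your proof is correct, and although it shares the paper's overall skeleton---reduce to $k=2$ via Corollary~\ref{cor:NoSpuriousMinimaWithMoreSquares}, factor out $d=\gcd(l_1,l_2)$, identify $\syzygy_1(\bdl)$ with $\{(\tilde l_2v,-\tilde l_1v)\}$ so that the relevant quadratic form on syzygies becomes $v\mapsto L(v^2P)$ with $P=\tilde l_1^2+\tilde l_2^2$, and analyze the functional locally at the zeros of $d$---the local analysis is carried out by genuinely different means. The paper writes $L$ explicitly as a combination of point and derivative evaluations at the roots of $d$ and then, by Hermite interpolation, constructs explicit $v$ making $L(v^2P)$ negative; you instead work intrinsically in the Artinian algebra $A=\bbR[s]/(d)$, split it by the Chinese remainder theorem, and invoke structural facts about positive semidefinite bilinear forms: congruence via $\sqrt{\bar P}$ at real roots (where $P>0$ because $\tilde l_1,\tilde l_2$ are coprime), and, at a conjugate pair, the observation that $\pm1,\pm i$ are squares in $\bbC[\varepsilon]/(\varepsilon^e)$, so that any PSD form $(x,y)\mapsto\mu(xy)$ there has $1$ isotropic and hence vanishes identically. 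A noteworthy payoff of your route is that you explicitly lean on the equality clause of condition~(ii) in Theorem~\ref{thm:CharacterizationSpuriousMinima} at the complex factors, and this is genuinely needed: when $P(z)=\tilde l_1(z)^2+\tilde l_2(z)^2=0$ at a complex common zero $z$ (which can happen, since $\tilde l_1(z)=\pm i\,\tilde l_2(z)$ is possible for coprime forms), the quadratic form is identically zero on that block, hence positive semidefinite, and the strategy of showing it is ``not positive semidefinite'' on syzygies does not literally apply---only the equality clause excludes such $g$. So your argument is not just a repackaging; it is more uniform and handles this degenerate subcase cleanly, at the cost of being less constructive than the paper's interpolation argument, which produces explicit descent directions.
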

\begin{proof}
    By Corollary~\ref{cor:NoSpuriousMinimaWithMoreSquares}, it suffices to consider the case \(k=\Pythagoras(X)=\dim(X)+1=2\).
    We may further assume that \(V(l_1,l_2)\neq\varnothing\) as otherwise we know that \(\bdl\) is not a first-order stationary point.
    Let \(a_i:=\nu_n^\sharp(l_i)\in\bbR[x_0,x_1]_n\) denote the pullback images of \(l_i\), which is a degree-\(n\) forms on \(\bbP^1\) for each \(i=1,2\), and \(a\) their greatest common divisor, i.e., \(a_1=ab_1\) and \(a_2=ab_2\) for some \(b_1,b_2\in\bbR[x_0,x_1]\).
    Then the degree-\(n\) syzygies of \(a_1,a_2\) are of the form \((b_2c,-b_1c)\) for any \(c\in\bbR[x_0,x_1]_{\deg(a)}\) as \(\bbR[x_0,x_1]\) is an integral domain.
    Consequently, the sum of squares of these syzygies can be written as \((b_1^2+b_2^2)c^2\).

    Applying a change of coordinates on \(\bbP^1\) if needed, we may assume that the finite set of zeros \(V(a_1,a_2)=\nu_n^{-1}(V(l_1,l_2))\subset\{x_0\neq0\}\subset\bbP^1\) is affine.
    Let \(\tilde{a}_i(x):=a(1,x),\tilde{b}_i(x):=b(1,x)\) for \(i=1,2\), and \(\tilde{a}(x):=a(1,x),\tilde{c}(x):=c(1,x)\) be the corresponding univariate polynomials.
    By Theorem~\ref{thm:CharacterizationSpuriousMinima}, we want to show that for any \(g\in\sAngle{l_1,l_2}^\perp\setminus\Sigma_X^*\), the quadratic form \((h_1,h_2)\mapsto\pAngle{g}{h_1^2+h_2^2}\) is not positive semidefinite on the subspace \(\syzygy_1(l_1,l_2)\).
    Under the pullback map \(\nu_n^\sharp:R\to\bbR[x_0,x_1]\) and the dehomogenization \(\bbR[x_0,x_1]\to\bbR[x_0,x_1]/(x_0-1)\cong\bbR[x]\), this is equivalent to say that any linear functional \(L:\bbR[x]_{\le2n}\to\bbR\) satisfying
    \begin{enumerate}
        \item \(L(\tilde{a}_1u_1+\tilde{a}_2u_2)=0\) for any polynomials \(u_1,u_2\in\bbR[x]_{\le n}\), and
        \item \(L(\sum_{i=1}^{m}v_i^2)<0\) for some \(v_1,\dots,v_m\in\bbR[x]_{\le n}\) and some \(m\ge1\),
    \end{enumerate}
    must give \(L((\tilde{b}_1^2+\tilde{b}_2^2)\tilde{c}^2)<0\) for some \(\tilde{c}\in\bbR[x]_{\le\deg(a)}\).
    To express \(L\) as point and derivative evaluations of the polynomials with degree no more than \(2n\), suppose \(V(\tilde{a})=\{y_1,\dots,y_p,z_1,\bar{z}_1,\dots,z_q,\bar{z}_q\}\subset\bbC\) and
    \begin{equation*}
        \tilde{a}(x)=\prod_{r=1}^{p}(x-y_r)^{\mu_r}\prod_{s=1}^{q}(x-z_s)^{\lambda_s}(x-\bar{z}_s)^{\lambda_s},
    \end{equation*}
    where \(y_1,\dots,y_{p}\in\bbR\) are real roots of \(\tilde{a}\), with multiplicities \(\mu_1,\dots,\mu_{p}\), and \(z_1,\bar{z}_1,\dots,z_{q},\bar{z}_{q}\in\bbC\setminus\bbR\) are pairs of complex roots, with multiplicities \(\lambda_1,\dots,\lambda_{q}\). 
    Then from property (i) of \(L\), there exist real coefficients \(\alpha_r^{(i)},\beta_{s}^{(i)},\gamma_{s}^{(i)}\) such that the image of any \(g\in\bbR[x]_{\le2n}\) under \(L\) can be written as
    \begin{equation*}
        \begin{aligned}
            L(g)&=\sum_{r=1}^{p}\left[\alpha_r^{(1)}g(y_r)+\sum_{i=2}^{\mu_r}\alpha_r^{(i)}\frac{\diff^i}{\diff y^i}g\vert_{x=y_r}\right]\\
                &+\sum_{s=1}^{q}\left[\beta_s^{(1)}\operatorname{re}(g(z_s))+\gamma_s^{(1)}\operatorname{im}(g(z_s))+\sum_{i=2}^{\mu_s}\beta_s^{(i)}\operatorname{re}\left(\frac{\diff^i}{\diff x^i}g\vert_{x=z_s}\right)+\gamma_s^{(i)}\operatorname{im}\left(\frac{\diff^i}{\diff x^i}g\vert_{x=z_s}\right)\right].
        \end{aligned}
    \end{equation*}
    Note that if \(\mu_r>1\) with \(\alpha_{r}^{\mu_r}\neq0\) for some \(r=1,\dots,p\), then there is a unique term in \(L((\tilde{b}_1^2+\tilde{b}_2^2)\tilde{c}^2)\)
    \begin{equation*} 
    \alpha_{r}^{(\mu_r)}\cdot2(\tilde{b}_1^2+\tilde{b}_2^2)\tilde{c}\cdot\frac{\diff^{\mu_r}}{\diff x^{\mu_r}}\tilde{c}\vert_{x=y_r}.
    \end{equation*}
    Using Hermite interpolation, we can always find \(\tilde{c}\in\bbR[x]_{\le\deg(a)}\) such that this term is sufficiently negative.
    Consequently, we can make \(L((\tilde{b}_1^2+\tilde{b}_2^2)\tilde{c}^2)\) negative, by the choice of \(\tilde{c}\).
    For example, an explicit way to do this is to set \(\tilde{c}(y_r)=1\), \(\frac{\diff^i}{\diff x^i}\tilde{c}\vert_{x=y_r}=0\) for \(i=2,\dots,\mu_r-1\), and \(\frac{\diff^i}{\diff x^i}\tilde{c}\vert_{x=y_r}=-C\) for any \(C>0\), while we let the evaluation of \(\tilde{c}\) vanish at all other roots \(x\in V(\tilde{a})\setminus\{y_r\}\) alongside with their derivatives.
    The same argument works for the case when \(\lambda_s>1\) with \(\beta_s^{(\lambda_s)}\) or \(\gamma_s^{(\lambda_s)}\neq0\) by noting that for any \(\tilde{b}_1(z_s),\tilde{b}_2(z_s),\) and \(\tilde{c}(z_s)\), there exists \(C\in\bbC\) such that
    \[
        \beta_s^{(\lambda_s)}\operatorname{re}\left(2(\tilde{b}_1^2+\tilde{b}_2^2)(z_s)\tilde{c}(z_s)\cdot C\right)
        +\gamma_s^{(\lambda_s)}\operatorname{im}\left(2(\tilde{b}_1^2+\tilde{b}_2^2)(z_s)\tilde{c}(z_s)\cdot C\right)<0.
    \]

    It remains to examine the case where \(\mu_1,\dots,\mu_p,\lambda_1,\dots,\lambda_q\) are all 1 with nonzero coefficients in \(L\).
    If \(\beta_s^{(1)}\) and \(\gamma_s^{(1)}\) are not simultaneously 0 for some \(s=1,\dots,q\), then it is well-known that \(L\) defines an indefinite quadratic form, by considering polynomials \(\tilde{c}\) that vanish at \(V(\tilde{a})\setminus\{z_s,\bar{z}_s\}\).
    However, if \(\beta_s^{(1)}=\gamma_s^{(1)}=0\) for all \(s=1,\dots,q\), then from property (ii) of \(L\), we must have \(\alpha_r^{(1)}<0\) for some \(r=1,\dots,p\), in which case we can make \(L((\tilde{b}_1^2+\tilde{b}_2^2)\tilde{c}^2)\) negative by considering polynomials \(\tilde{c}\) that vanish at \(V(\tilde{a})\setminus\{y_r\}\).
\end{proof}

To answer the question on the existence of spurious second-order stationary points and local minima beyond rational normal curves, we consider the following examples.
The first example shows the existence of second-order stationary points that are not local minima on the Veronese surface.

\begin{example}\label{ex:TernaryQuarticsBoundaryPoint}
    Consider the Veronese surface \(X=\nu_2(\bbP^2)\subset\bbP^5\) where the map $\nu_2\colon \bbP^2 \to \bbP^5$ is given by \(\nu_2([x_0\mathcolon x_1\mathcolon x_2])=[x_0^2\mathcolon x_0x_1\mathcolon x_0x_2\mathcolon x_1^2\mathcolon x_1x_2\mathcolon x_2^2]\). 
    By slight abuse of notation we identify quadratic (resp.\ quartic) forms on \(\bbP^2\) with linear (resp.\ quadratic) forms on \(X\).
    Fix any inner product on \(R_2\) such that the forms corresponding to all the monomials \(x_0^4,x_0^3x_1,\dots,x_2^4\) are pairwise orthogonal, and take \(k=\Pythagoras(X)=3\).
    We claim that the tuple \(\bdl=(x_0^2,x_0x_1,x_1^2)\) is a spurious second-order stationary point.
    To see this, we first note that \(\sAngle{\bdl}_2\) contain all monomials whose degree in \(x_2\) is less than 3, because the sum of degrees in \(x_0\) and \(x_1\) is at least 2.
    In other words, \(\sAngle{\bdl}_2\) is the orthogonal complement to the subspace \(\linspan_\bbR\{x_2^4,x_0x_2^3,x_1x_2^3\}\).
    Thus to make it a first-order stationary point, we can take \(\bar{f}=\sigma_3(\bdl)+g\), where \(g\perp\sAngle{\bdl}_2\).
    We claim that \(g=-\epsilon x_2^4\) for some \(\epsilon>0\) would further satisfy the second-order optimality condition.
    To see this, note that for any \(\bdh=(h_1,h_2,h_3)\in\syzygy_1(\bdl)\), we can write \(h_i=a_i x_2^2+h'_i\), for some \(a_i\in\bbR\) such that \(h'_i\in H:=\linspan_\bbR\{x_0^2,x_0x_1,x_0x_2,x_1^2,x_1x_2\}\).
    By definition
    \[
        x_0^2 h_1 + x_0x_1 h_2 + x_1^2 h_3=a_1x_0^2x_2^2+a_2x_0x_1x_2^2+a_3x_1^2x_2^2+h'=0,
    \]
    where \(h'\in\bbR[x_0,x_1,x_2]_4\) does not contain any monomial that is divisible by \(x_2^2\).
    Thus we must have \(a_1=a_2=a_3=0\) by the linear independence of the monomials \(x_0^2x_2^2,x_0x_1x_2^2,x_1^2x_2^2\), and consequently \(h_i\in H\) for \(i=1,2,3\).
    Therefore, \(\sum_{i=1}^{3}h_i^2\in\sAngle{\bdl}_2\perp g\) for any \(g\) satisfying the first-order optimality condition. 
    By Theorem~\ref{thm:CharacterizationSpuriousMinima}, we know that \(\bdl\) is a spurious second-order stationary point for any sufficiently small \(\epsilon>0\) if \(g\notin\Sigma_X^*\) and in addition \(g\perp\sAngle{\bdh}_2\) for any \(\bdh\in\syzygy_1(\bdl)\).
    This means that \(g\perp\sAngle{H}_2\), which holds exactly when \(g=-\epsilon x_2^4\).

    In the following we show that \(\bdl\) is not a spurious local minimum for such choice of \(g\).
    Consider a curve \(\gamma:(-1,1)\to R_1^k\), \(\gamma(z)=\bdl+\bdl^{(1)}z+\bdl^{(2)}z^2\) with \(\bdl^{(1)}=\sqrt{2}(x_1x_2,-x_0x_2,0)\) and \(\bdl^{(2)}=(-x_2^2,0,-x_2^2)\).
    Clearly, \(\bdl^{(1)}\in\syzygy_1(\bdl)\).
    Moreover, 
    \begin{align*}
        \sum_{i=1}^{3}2l_il_i^{(2)}=-\sum_{i=1}^{3}[l_i^{(1)}]^2=-2x_2^2(x_0^2+x_1^2),\quad 
        \sum_{i=1}^{3}l_i^{(1)}l_i^{(2)}=\sqrt{2}x_2^3(x_0-x_1)
    \end{align*}
    are all perpendicular to \(g=-\epsilon x_2^4\).
    Thus the objective function along this curve can be written as 
    \begin{equation*}
        \phi(z)=z^4\Bigl(2\pAngle{g}{\sigma_3(\bdl^{(2)})}+\nVert{\sum_{i=1}^{3}2l_il_i^{(2)}+[l_i^{(1)}]^2}^2\Bigr)+o(z^4)=-4\nVert{x_2^4}^2\epsilon z^4+o(z^4),
    \end{equation*}
    which implies that it is negative for some sufficiently small \(z>0\).
    Consequently, \(\bdl\) cannot be a local minimum.

    Before ending this example, we want to make two remarks.
    First, to be a second-order stationary point, \(g\) cannot contain other monomials in \(\linspan_\bbR\{x_2^4,x_0x_2^3,x_1x_2^3\}\).
    To see this, suppose \(g=-\epsilon x_2^4+\delta_1 x_0x_2^3+\delta_2 x_1x_2^3\) for some \(\delta_1,\delta_2\in\bbR\).
    Then for \(\bdh=(x_1x_2,-x_0x_2,0)\in\syzygy_1(\bdl)\), we see that \(\pAngle{g}{\sigma_3(\bdh)}=\pAngle{-\epsilon x_2^4+\delta_1 x_0x_2^3+\delta_2 x_1x_2^3}{x_1^2x_2^2+x_0^2x_2^2}=0\).
    However, both \(x_0x_2^3,x_1x_2^3\in\sAngle{\bdh}_2\), so \(g\perp\sAngle{\bdh}_2\) only if \(\delta_1=\delta_2=0\).
    This implies that the choice of \(g\) for second-order optimality condition is unique up to scaling of \(\epsilon>0\).

    Second, the linear forms \(l_1,l_2,l_3\) share a common real zero \(\nu_2([0\mathcolon0\mathcolon1])\in\bbP^5\), and thus the quadratic form \(\sigma_3(\bdl)\) lies on the boundary of \(\Sigma_X\).
    However, the ideal \(\sAngle{\bdl}\) is not real radical, since \(x_0^2x_2^2\in\sAngle{\bdl}\) but \(x_0x_2\notin\sAngle{\bdl}\).
    Moreover, any tuple \(\bdl_b:=(x_0^2+bx_1^2,\sqrt{1-2b}x_0x_1,\sqrt{1-b^2}x_1^2)\) satisfies \(\sigma_3(\bdl_b)=x_0^4+x_0^2x_1^2+x_1^4=\sigma_3(\bdl)\), for \(0\le b\le\frac{1}{2}\), which means that \(\sigma_3(\bdl)\) has infinitely many representations as a sum of 3 squares.
    We will discuss more on the number of representations of the sums of squares on the Veronese surface in Proposition~\ref{prop:VeroneseSurfaceMinimaBinary}.
\end{example}

The second example shows the existence of spurious local minima on a 2-dimensional rational normal scroll.
As rational normal scrolls are toric varieties, they can be described through their associated lattice polytopes, which are known as \emph{Lawrence prisms} defined by~\cite{batyrev2006multiples}
\begin{equation}\label{eq:LawrencePrism}
    P:=\conv\{e_0,e_0+n_1(e_m-e_0),e_1,e_1+n_2(e_m-e_0),\dots,e_{m-1},e_{m-1}+n_m(e_m-e_0)\},
\end{equation}
where \(e_0,e_1,\dots,e_m\in\bbR^{m+1}\) is the standard basis, and \(n_1,\dots,n_m\in\bbZ_{\ge1}\) are called \emph{heights} of the prism.
For instance, a Lawrence prism of heights \((1,2,2)\) is shown in Figure~\ref{fig:LawrencePrism}, where each lattice point corresponds to a bihomogeneous monomial on \(\bbP^2\times\bbP^1\).

\begin{figure}[h]
    \centering
    \begin{tikzpicture}[scale=1, x={(2.5cm,-0.6cm)}, y={(1.5cm,0.8cm)}, z={(0cm,1cm)}]
    \coordinate (000) at (0,0,0);
    \coordinate (010) at (0,1,0);
    \coordinate (100) at (1,0,0);
    \coordinate (001) at (0,0,1);
    \coordinate (101) at (1,0,1);
    \coordinate (102) at (1,0,2);
    \coordinate (011) at (0,1,1);
    \coordinate (012) at (0,1,2);
    \draw[dotted] (000) -- (010) -- (100) -- cycle;
    \draw (000) -- (100);
    \draw (000) -- (001);
    \draw[dotted] (010) -- (012);
    \draw (100) -- (102);
    \draw (001) -- (012) -- (102) -- cycle;
    \fill (000) circle (2pt) node [left=2mm] {$x_0y_0$};
    \fill (001) circle (2pt) node [left=1mm] {$x_0y_1$};
    \fill (010) circle (2pt) node [left=2mm] {$x_1y_0^2$};
    \fill (011) circle (2pt) node [left=1mm] {$x_1y_0y_1$};
    \fill (012) circle (2pt) node [left=2mm] {$x_1y_1^2$};
    \fill (100) circle (2pt) node [right=2mm] {$x_2y_0^2$};
    \fill (101) circle (2pt) node [right=1mm] {$x_2y_0y_1$};
    \fill (102) circle (2pt) node [right=2mm] {$x_2y_1^2$};
    \end{tikzpicture}
    \caption{A Lawrence Prism of Heights \((1,2,2)\)}
    \label{fig:LawrencePrism}
\end{figure}
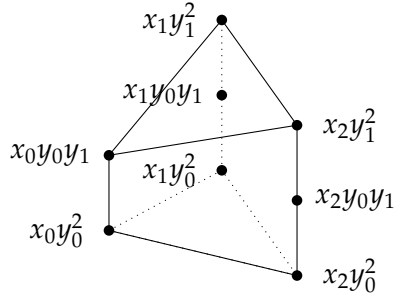

\begin{example}\label{ex:2DimScrollSpuriousMinima}
    Let \(X\) be a 2-dimensional rational normal scroll associated with the Lawrence prism of heights \((2,2)\) and take \(k=\Pythagoras(X)=3\).
    Through the toric parametrization of \(X\), we can use monomials as a basis of $R_1$ and $R_2$, namely \( (y_0^2x_1,y_0y_1x_1,y_1^2x_1,y_0^2x_2,y_0y_1x_2,y_1^2x_2)\) for $R_1$ and \(y_0^4x_1^2,\dots,y_1^4x_2^2\) for \(R_2\). As inner product on $R_2$, we choose the one that makes these monomials an orthonormal basis (it is sufficient to choose one such that they are orthogonal).
    Set \(\bdl=(y_0^2x_1,y_0y_1x_1,y_1^2x_1)\). 
    Any syzygies with all of their components in \(\sAngle{\bdl}_1\) do not affect the second-order optimality condition in Theorem~\ref{thm:CharacterizationSpuriousMinima}, so we first calculate the syzygies modulo \(y_0^2x_1,y_0y_1x_1\), and \(y_1^2x_1\).
    Through straightforward calculation on the dimension, they are generated by the tuples \(\bdh^{(1)}:=(y_0y_1x_2,-y_0^2x_2,0)\), \(\bdh^{(2)}:=(y_1^2x_2,-y_0y_1x_2,0)\), \(\bdh^{(3)}:=(0,-y_0y_1x_2,y_0^2x_2)\), and \(\bdh^{(4)}:=(0,-y_1^2x_2,y_0y_1x_2)\).
    Thus for any \(\bdh=(h_1,h_2,h_3)\in\syzygy_1(\bdl)\), \( \bdh = c_1\bdh^{(1)} + c_2\bdh^{(2)} + c_3\bdh^{(3)} + c_4\bdh^{(4)}\) with \(c_1,c_2,c_3,c_4\in\bbR\), there exists some \(f\in\sAngle{\bdl}_2\) such that
    \begin{align*}
        \sum_{i=1}^{3}h_i^2=&f+(c_1 y_0y_1x_2+c_2 y_1^2x_2)^2+(c_1 y_0^2x_2+(c_2+c_3)y_0y_1x_2+c_4 y_1^2x_2)^2+(c_3 y_0^2x_2+c_4 y_0y_1x_2)^2\\
        =&f+(c_1^2+c_3^2)y_0^4x_2^2+(2c_1c_2+2c_1c_3+2c_3c_4)y_0^3y_1x_2^2+(c_1^2+c_2^2+2c_2c_3+c_3^2+2c_1c_4+c_4^2)y_0^2y_1^2x_2^2\\
         &\;+(2c_1c_2+2c_2c_4+2c_3c_4)y_0y_1^3x_2^2+(c_2^2+c_4^2)y_1^4x_2^2.
     \end{align*}
     We can choose \(g=x_2^2(y_0^4+y_1^4-\frac{1}{3}y_0^2y_1^2)\), which clearly satisfies \(g\perp\sAngle{\bdl}_2\), \(g\notin\Sigma_X^*\) (indeed, \(\pAngle{g}{y_0^2y_1^2x_2^2}=-\frac{1}{3}<0\)), and \(\pAngle{g}{\sum_{i=1}^{3}h_i^2}=\frac{1}{3}[c_1^2+c_4^2+(c_1-c_4)^2+c_2^2+c_3^2+(c_2-c_3)^2]\), which is zero only if \(c_1=\dots=c_4=0\), and thus \(h_i\in\sAngle{\bdl}_1\) for \(i=1,2,3\) so \(g\perp\sAngle{\bdh}_2\).
     This implies that \(\bdl\) is a spurious second-order stationary point, and moreover a spurious local minimum by Theorem~\ref{thm:CharacterizationSpuriousMinima}, since the ideal \(\sAngle{\bdl}\subset R\) is real radical.
\end{example}

The construction in Example~\ref{ex:2DimScrollSpuriousMinima} can be generalized to any higher-dimensional smooth rational normal scroll, which, together with Example~\ref{ex:TernaryQuarticsBoundaryPoint}, shows the only-if part in Theorem~\ref{thm:ExistenceSpuriousMinima} due to the classification of smooth varieties of minimal degree~\cite{eisenbud1987varieties}.

\begin{proposition}\label{prop:ScrollSpuriousMinima}
    Let \(X\subset\bbP^n\) be any \(m\)-dimensional smooth rational normal scroll with a Lawrence prism of heights \(0<n_1\le n_2\le\cdots\le n_m\) for some \(m\ge2\).
    Then there exists a spurious local minimum in the nonconvex formulation~\eqref{eq:SOSMinimizationProblem} of rank \(k=n-n_1\). 
    Consequently, there are spurious second-order stationary points for any \(m+1\le k\le n-n_1\).
\end{proposition}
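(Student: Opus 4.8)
The plan is to generalise Example~\ref{ex:2DimScrollSpuriousMinima}: exhibit one explicit spurious local minimum of rank $k=n-n_1$ via Theorem~\ref{thm:CharacterizationSpuriousMinima}, then propagate it to all intermediate ranks via Corollary~\ref{cor:NoSpuriousMinimaWithMoreSquares}. Since $X$ is of minimal degree it is projectively normal, so $R_d=H^0(X,\mathcal O_X(d))$, and for the scroll this gives the ruling decomposition $R_1=\bigoplus_{i=1}^m R_1^{(i)}$ with $R_1^{(i)}\cong\bbR[y_0,y_1]_{n_i}$ (write its elements $x_iq$ with $\deg q=n_i$) and $R_2=\bigoplus_{1\le i\le i'\le m}R_2^{(i,i')}$ with $R_2^{(i,i')}\cong\bbR[y_0,y_1]_{n_i+n_{i'}}$ (elements $x_ix_{i'}q$), multiplication being $(x_iq)(x_{i'}q')=x_ix_{i'}(qq')$. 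As condition~(ii) of Theorem~\ref{thm:CharacterizationSpuriousMinima} is inner-product independent, fix the inner product making these decompositions orthogonal and the monomials orthonormal. Take $\bdl=(x_iy_0^{n_i-j}y_1^j)_{2\le i\le m,\,0\le j\le n_i}$, a monomial basis of $\bigoplus_{i\ge2}R_1^{(i)}$; it has $\sum_{i\ge2}(n_i+1)=n-n_1=k$ entries, and $\sigma_k(\bdl)$ lies on the boundary of $\Sigma_X$ (its components share the zero locus $\{x_2=\cdots=x_m=0\}$). It remains to check that $\sAngle{\bdl}$ is real radical and that condition~(ii) holds.

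Two algebraic facts. \emph{(a) Real radicality.} The degree-$d$ part of $\sAngle{\bdl}=\langle\bigoplus_{i\ge2}R_1^{(i)}\rangle$ is the sum of all $R_d^{(i_1,\dots,i_d)}$ with $(i_1,\dots,i_d)\ne(1,\dots,1)$, so $R/\sAngle{\bdl}\cong\bigoplus_d\bbR[y_0,y_1]_{dn_1}$ is the homogeneous coordinate ring of the rational normal curve $\nu_{n_1}(\bbP^1)$ — reduced, irreducible, totally real — whence $\sAngle{\bdl}$ is a real prime ideal; in particular $\sAngle{\bdl}_2=\bigoplus_{2\le i\le i'}R_2^{(i,i')}$, so $R_2^{(1,1)}\cong\bbR[y_0,y_1]_{2n_1}$ lies in $\sAngle{\bdl}_2^\perp$. \emph{(b) Syzygies.} Writing a syzygy entry as $h_{(i,j)}=x_1q_{(i,j)}+\tilde h_{(i,j)}$ with $q_{(i,j)}\in\bbR[y]_{n_1}$ and $\tilde h_{(i,j)}\in\sAngle{\bdl}_1=\bigoplus_{i\ge2}R_1^{(i)}$, the defining relation splits across the orthogonal pieces $\bigoplus_{i\ge2}R_2^{(1,i)}$ and $\bigoplus_{2\le i\le i'}R_2^{(i,i')}$; the first piece vanishing says precisely that $(q_{(i,j)})_j$ is, for each $i\ge2$ separately, a syzygy of $(y_0^{n_i-j}y_1^j)_j$ with degree-$n_1$ coefficients. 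Such a syzygy has the form $q_{(i,j)}=y_1\tilde Q_{(i,j)}-y_0\tilde Q_{(i,j-1)}$ for arbitrary binary forms $\tilde Q_{(i,0)},\dots,\tilde Q_{(i,n_i-1)}$ of degree $n_1-1$ (with $\tilde Q_{(i,-1)}=\tilde Q_{(i,n_i)}=0$), and a direct expansion gives $\sum_j q_{(i,j)}^2=(y_0^2+y_1^2)\sum_s\tilde Q_{(i,s)}^2-2y_0y_1\sum_s\tilde Q_{(i,s)}\tilde Q_{(i,s-1)}$.

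Now search for the separating object of condition~(ii) in the form $g=x_1^2w\in R_2^{(1,1)}$ with $w\in\bbR[y_0,y_1]_{2n_1}$ regarded as a functional; then $g\perp\sAngle{\bdl}_2$ automatically, and since $g\in R_2^{(1,1)}$ only the $R_2^{(1,1)}$-component of $\sigma_k(\bdh)$ — which equals $x_1^2\sum_{i\ge2}\sum_j q_{(i,j)}^2$ — is seen, so by~(b), $\pAngle{g}{\sigma_k(\bdh)}=\sum_{i\ge2}\Theta_i\big((\tilde Q_{(i,s)})_s\big)$ where, on binary forms of degree $n_1-1$, $A(q):=\pAngle{w}{(y_0^2+y_1^2)q^2}$, $B(q,q'):=\pAngle{w}{y_0y_1\,qq'}$, and $\Theta_i(\tilde Q):=\sum_{s=0}^{n_i-1}A(\tilde Q_s)-2\sum_{s=1}^{n_i-1}B(\tilde Q_s,\tilde Q_{s-1})$. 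Condition~(ii) thus reduces to: (I) each $\Theta_i$ is positive definite — giving $\pAngle{g}{\sigma_k(\bdh)}\ge0$, with equality only if all $\tilde Q_{(i,s)}=0$, i.e.\ all $h_{(i,j)}\in\sAngle{\bdl}_1$, whence $\sAngle{\bdh}_2\subseteq\sAngle{\bdl}_2\perp g$ as required; and (II) $\pAngle{w}{s^2}<0$ for some $s\in\bbR[y]_{n_1}$ — which, via $\pAngle{g}{(x_1s)^2}=\pAngle{w}{s^2}$ and $(x_1s)^2\in\Sigma_X$, shows $g\notin\Sigma_X^*$. Writing $\Theta_i$ as the block matrix $I_{n_i}\otimes A-P_{n_i}\otimes B$ with $P_{n_i}$ the path adjacency matrix ($\lVert P_{n_i}\rVert=2\cos\tfrac{\pi}{n_i+1}<2$), condition~(I) holds for all $i$ once $A\succ0$ and $\lVert A^{-1/2}BA^{-1/2}\rVert\le\tfrac12$.

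The main obstacle is meeting (I) and (II) simultaneously, since they pull against each other: the bound in (I) is in the spirit of $w$ being a positive functional, which (II) forbids. I would resolve this using a boundary point of the moment cone. Fix distinct $z_1,\dots,z_{n_1}\in(-1,1)$ and let $w_\mu$ be the moment functional of $\mu=\sum_p\delta_{z_p}$ on $\bbR[y_0,y_1]_{2n_1}$. Dehomogenising, $A(q)=\int(1+z^2)q^2\,d\mu$ and $B(q,q)=\int z\,q^2\,d\mu$; a nonzero binary form of degree $n_1-1$ cannot vanish at all $n_1$ points, so $A\succ0$, and from $|z|<\tfrac12(1+z^2)$ on $(-1,1)$ we get $|B(q,q)|\le\int|z|q^2\,d\mu<\tfrac12A(q)$ for $q\ne0$, hence $\lVert A^{-1/2}BA^{-1/2}\rVert<\tfrac12$. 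But $w_\mu$ is a boundary point of the cone of positive functionals: with $s_0:=\prod_p(y_1-z_py_0)$ one has $\pAngle{w_\mu}{s_0^2}=\sum_p s_0(1,z_p)^2=0$. Perturb: $w:=w_\mu-\epsilon\,s_0^2$ for small $\epsilon>0$. Then $\pAngle{w}{s_0^2}=-\epsilon\lVert s_0^2\rVert^2<0$, giving~(II); and $A\succ0$, $\lVert A^{-1/2}BA^{-1/2}\rVert<\tfrac12$ are open conditions, so they persist, giving~(I). Hence $g=x_1^2w$ satisfies Theorem~\ref{thm:CharacterizationSpuriousMinima}(ii), and since $\sAngle{\bdl}$ is real radical, $\bdl$ is a spurious local minimum of the rank-$(n-n_1)$ formulation. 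Finally, a spurious local minimum is in particular a spurious second-order stationary point, and Corollary~\ref{cor:NoSpuriousMinimaWithMoreSquares} (read contrapositively) shows that deleting the last entry of such a tuple leaves one; iterating, every length-$k$ initial segment of $\bdl$ is a spurious second-order stationary point for $m+1\le k\le n-n_1$.
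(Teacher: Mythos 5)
Your proof is correct, and its skeleton coincides with the paper's: the same tuple $\bdl$ of all ruling monomials with $i\ge 2$, the same identification $R/\sAngle{\bdl}\cong\bbR[\nu_{n_1}(\bbP^1)]$ to get real radicality, Theorem~\ref{thm:CharacterizationSpuriousMinima} to pass from condition (ii) to a spurious local minimum, and the contrapositive of Corollary~\ref{cor:NoSpuriousMinimaWithMoreSquares} for the range $m+1\le k\le n-n_1$. Where you genuinely diverge is in producing the separating functional. The paper argues non-constructively: it shows by a cascading-coefficient argument that the extreme ray $y_0^{2n_1}x_1^2$ of $\Sigma_{X'}$ is not in the cone generated by $\sAngle{\bdl}_2$ and the squares of syzygies, and then invokes convex separation to obtain some $f'\in\calR_\bdl^*\setminus\Sigma_{X'}^*$, relying on Lemma~\ref{lemma:RealRadicalTangentConeCondition} (relative-interior choice) to secure the ``equality only if'' clause of condition (ii). You instead exhibit $g=x_1^2(w_\mu-\epsilon s_0^2)$ explicitly, with $w_\mu$ a discrete moment functional on $n_1$ points of $(-1,1)$ and $s_0$ the form vanishing there, and you verify strict positivity on the nontrivial syzygies directly via the block decomposition $\Theta_i=I_{n_i}\otimes A-P_{n_i}\otimes B$ and the spectral bound $\lVert P_{n_i}\rVert<2$ combined with $\lvert z\rvert<\tfrac12(1+z^2)$ on $(-1,1)$. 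This buys you two things: the ``equality only if'' clause falls out immediately from positive definiteness (no appeal to Lemma~\ref{lemma:RealRadicalTangentConeCondition} is needed for that step), and the witness is fully explicit, which could be useful for numerical verification. The paper's route is shorter because it outsources the existence of $g$ to duality, at the cost of not exhibiting it. Both arguments are sound; yours is a legitimate constructive alternative for the key step.
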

\begin{proof}
    Similar to Example~\ref{ex:2DimScrollSpuriousMinima}, we choose a monomial basis of \(R_1\) consisting of \(y_0^iy_1^{n_j-i}x_j\) where $j$ ranges from \(1,\dots,m\) and \(i=0,1,\dots,n_j\).
    We show the existence of a spurious local minimum for \(k=n-n_1\).
    Let \(\bdl\) be the tuple consisting of \(l_{i,j}:=y_0^iy_1^{n_j-i}x_j\) for \(i=0,\dots,n_j\) and \(j=2,\dots,m\).
    From the determinantal representation of \(I_X\), we know that the nontrivial syzygies of \(l_{i,j}\) are generated by tuples \(g_{d,i,j}\in R_1^k\) with nonzero entries \(y_0^dy_1^{n_1-d}x_1\) for \(l_{i,j}\) and \(-y_0^{d+1}y_1^{n_1-d-1}x_1\) for \(l_{i-1,j}\), for any \(d=0,1,\dots,n_1-1\), \(i=1,\dots,n_j\), and \(j=2,\dots,m\).
    Thus the sums of squares of the tuple of syzygies \(\sum_{i,j,d}c_{d,i,j}g_{d,i,j}\) can be written as
    \[
        \sum_{i,j}(\sum_{d=0}^{n_1}c_{d,i,j}y_0^dy_1^{n_1-d}x_1-c_{d-1,i+1,j}y_0^{d-1}y_1^{n_1-d+1}x_1)^2,
    \]
    where the coefficients \(c_{d,i,j}\in\bbR\) with the convention \(c_{d,i,j}=0\) when \(d<0\) or \(d>n_1\), for each \(i=0,1,\dots,n_j\) and \(j=2,\dots,m\).
    Thus \(y_0^{2n_1}x_1^2\) cannot be represented in the above form because the coefficient of \(y_1^{2n_1}x_1^2\) is \(\sum_{i,j}c_{0,i,j}^2=0\), which implies that each \(c_{0,i,j}=0\) and thus the coefficient of \(y_0^2y_1^{2n_1-2}x_1^2\) is \(\sum_{i,j}c_{1,i,j}^2=0\), so on and so forth.
    Note that \(R/\sAngle{\bdl}\) is isomorphic to the coordinate ring of degree-\(n_1\) rational normal curve \(X'\), so \(\sAngle{\bdl}\) is real radical, and \(y_0^{2n_1}x_1^2\) corresponds to an extreme ray of \(\Sigma_{X'}\).
    Therefore, we conclude that there exists \(f'\notin\Sigma_{X'}^*\) that is positive semidefinite on the nontrivial syzygies, which can then be extended to \(f\notin\Sigma_X^*\) satisfying \(f\perp\sAngle{\bdl}_2\) and \(\pAngle{f}{\sigma_k(\bdh)}\ge0\) for any \(\bdh\in\syzygy_1(\bdl)\).
    Lemma~\ref{lemma:RealRadicalTangentConeCondition} shows that when \(\sAngle{\bdl}\) is real radical, \(\pAngle{f}{\sigma_k(\bdl)}=0\) implies that \(f\perp\sAngle{\bdh}_2\), proving that \(\bdl\) is a spurious local minimum by Theorem~\ref{thm:CharacterizationSpuriousMinima}.
    The existence of spurious second-order stationary points for any \(m+1\le k\le n-n_1\) then follows from Corollary~\ref{cor:NoSpuriousMinimaWithMoreSquares}.
\end{proof}

Proposition~\ref{prop:ScrollSpuriousMinima} indicates that even for varieties of minimal degree, spurious local minima may persist with up to \(k=n-1\) squares (if \(n_1=2\)), which is larger than the known ranks associated with spurious local minima for Burer-Monteiro-type algorithms~\cite{ocarroll2022burer}.

\subsection{Proof of Theorem~\ref{thm:ExistenceSpuriousMinima}}

\begin{proof}[Proof of Theorem~\ref{thm:ExistenceSpuriousMinima}]
    The only-if direction is shown by Example~\ref{ex:TernaryQuarticsBoundaryPoint} and Proposition~\ref{prop:ScrollSpuriousMinima}, so we discuss the if direction below. 
    \begin{itemize}
        \item When \(\dim(X)=1\), then \(X\) is a rational normal curve and the assertion is shown in Lemma~\ref{lemma:RationalNormalCurves}, which is also the main result in~\cite{legat2023low}. 
        \item When \(\dim(X)=n\), then \(X=\bbP^n\) and either \(\sAngle{\bdl}_2=R_2\) or \(l_1,\dots,l_{n+1}\) are linearly dependent, which cannot be a spurious second-order stationary point by Corollary~\ref{cor:LinearDependenceNotLocalMinimum}.
        \item When \(\dim(X)=n-1\), then \(X\subseteq\bbP^n\) is a smooth quadratic hypersurface, which we assume to be defined by a single quadric \(Q\in S_2\).
        We can identify $R_1$ with $S_1$ as real vector spaces, and write the image of $L_1,\dots,L_n\in S_1$ under this isomorphism as $l_1,\dots,l_n$.
        Assume $\bdl=(l_1,\dots,l_n)\in R_1^n$ is a second-order stationary point. By Theorem~\ref{thm:CharacterizationSpuriousMinima} there exists nonzero $g\in R_2$ which satisfies $g\in\sAngle{\bdl}_2^\perp\setminus\Sigma_X^*$, and satisfies $\pAngle{g}{\sigma_k(\bdh)}\geq 0$ for every $\bdh=(h_1,\dots, h_n)\in\syzygy_1(\bdl)$.

        By Corollary~\ref{cor:LinearDependenceNotLocalMinimum} the forms $l_1,\dots,l_n$ and thus $L_1,\dots,L_n$ are linearly independent so they can be completed to a basis $l_0,l_1,\dots,l_n$ of $R_1$ and correspondingly $L_0,L_1,\dots,L_n$ of $S_1$. Since $g$ annihilates the quadratic part of the ideal $\sAngle{\bdl}_2\subset R_2$ we have $g=a l_0^2$ for some real number $a$. 
        If $Q$ is the quadric defining $X$ then $Q\in \sAngle{L_1,\dots,L_n}\subset S_2$ since otherwise $\sAngle{l_1,\dots,l_n}=R_2$, proving that the differential of $\sigma_n$ is of full rank at $\bdl$ contradicting the fact that $\bdl$ is a first-order stationary point. The form $Q$ must furthermore be of full rank because $X$ is nonsingular and therefore must involve $l_0$. It follows that if the vector $\bdh=(h_1,\dots,h_n)\in S_1^n$ satisfies $\sum_{i=1}^{n}l_ih_i=Q$ and $h_i:=c_{i0}l_0+\sum_{j=1}^{n}c_{ij}l_j$ for some $c_{ij}\in\bbR$, then some of the coefficients $c_{i0}$ must be nonzero. Since $\pAngle{g}{\sigma_k(\bdh)}= a\sum_{i=1}^{n}c_{i0}^2\ge0$, we conclude that $a\geq 0$, and thus $g\in-\Sigma_X^*$, a contradiction with the pointedness of $\Sigma_X^*$.
            \qedhere
    \end{itemize}
\end{proof}

\subsection{Proof of Theorem~\ref{thm:SurfacesOfMinimalDegree}}
\label{sec:SurfacesOfMinimalDegree}

In view of Corollary~\ref{cor:LocalMinimaOrthogonalTransform}, it is possible to consider equivalence classes of tuples of linear forms.
We say \(F\in S_2\) is a Gram matrix representing \(f\in R_2\) if \(F\) is positive semidefinite and \(f\) is the image of \(F\) under the quotient map \(\kappa:S_2\to R_2\).
We denote all Gram matrices with rank at most \(k\) as \(\calG_k\subseteq S_2\).
Each equivalence class of tuples of linear forms \(\bdl=(l_1,\dots,l_k)\in R_1^k\) under orthogonal transformations is associated with a unique Gram matrix \(G=\sum_{i=1}^{k}L_iL_i^\transpose\in\calG_k\), where \(L_i\) is the representative of \(l_i\) in \(S_1\).
For simplicity, we use \(\tau:R_1^k\to\calG_k\) to denote the map that sends \(\bdl\) to \(\tau(\bdl)=\sum_{i=1}^{k}L_iL_i^\transpose\), which lets us to factor the sum-of-square map \(\sigma_k=\kappa\circ\tau\).
One can check that \(\tau\) is an open map as it is a quotient map by the \(k\times k\) orthogonal automorphism group on \(R_1^k\).

Let \(D\subset R_1^k\) denote the locus of tuples of linear forms \(\bdl=(l_1,\dots,l_k)\in R_1^k\) such that \(l_1,\dots,l_k\) do not have common zeros on \(X\).
We have seen that any tuple \(\bdl\in D\) cannot be spurious first-order stationary point as \(\sAngle{\bdl}=R_2\).
We may extend this observation: the preimage of any \(F\supseteq\tau(D)\) such that the restriction \(\kappa\vert_{F}\) is strongly open cannot contain spurious local minima because any open neighborhood of \(\bdl\in \tau^{-1}(F)\) is mapped to an open set containing a point closer to the target \(\bar{f}\) than \(\sigma_k(\bdl)\).
Our next lemma gives an explicit example of such set \(F\subset\calG_k\).

\begin{lemma}\label{lemma:FiniteGramNotSpuriousMinimum}
    Let \(X\) be a smooth variety of minimal degree, \(k=\dim(X)+1\), and \(F\subset\calG_k\) be the subset of Gram matrices such that each point in the image of the quotient map \(\kappa:\calG_k\to R_2\) has only finitely many preimages.
    Then \(F\supseteq\tau(D)\) and the restricted quotient map \(\kappa\vert_F\) is strongly open.
    In particular, any tuple \(\bdl\in\tau^{-1}(F)\) cannot be a spurious local minimum.
\end{lemma}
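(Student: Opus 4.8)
The plan is to reduce the whole statement to a local computation of the Gram parametrization $\kappa$ at the point $G:=\tau(\bdl)$ for $\bdl\in D$, showing that there $\kappa|_{\calG_k}$ is a local diffeomorphism; combined with openness of $\tau$, compactness of Gram spectrahedra, and the mechanism described just before the lemma, this yields all three assertions.

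\textbf{Set-up.} Since $X$ is smooth of minimal degree it is arithmetically Cohen--Macaulay and totally real, and $k=\dim(X)+1=\Pythagoras(X)$. If $\bdl=(l_1,\dots,l_k)\in D$ then, as recalled at the start of this section, $\sAngle{\bdl}_2=R_2$, so $\diff_\bdl\sigma_k$ is surjective; moreover $l_1,\dots,l_k$ are linearly independent, for otherwise they would span at most $\dim X$ dimensions and $\le\dim X$ linear forms always have a common zero on $X$ by the projective dimension theorem, contradicting $\bdl\in D$. Hence $G=\tau(\bdl)$ has rank exactly $k$, and $\sigma_k(\bdl)=\sum_i l_i^2$ is strictly positive on $X(\bbR)$, so $\sigma_k(\bdl)\in\interior\Sigma_X$ (using $\Sigma_X=\rmP_X$).

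\textbf{The local diffeomorphism.} Near a rank-$k$ matrix, $\calG_k$ coincides with the smooth locus of the symmetric determinantal variety of matrices of rank $\le k$, a submanifold of $S_2$ whose tangent space at $G$ is the degree-two piece $\sAngle{L_1,\dots,L_k}_2$ of the ideal in $S$ generated by lifts of the $l_i$, of dimension $k(n+1)-\binom{k}{2}$. Using the Hilbert function of a variety of minimal degree, $\dim R_2=\binom{\dim X+2}{2}+\codim(X)(\dim X+1)$, one checks the identity $k(n+1)-\binom{k}{2}=\dim R_2$. Since $\kappa$ carries $\sAngle{L_1,\dots,L_k}_2$ onto $\sAngle{\bdl}_2=R_2$ with kernel $(I_X)_2\cap\sAngle{L_1,\dots,L_k}_2$ of dimension $\bigl(k(n+1)-\binom{k}{2}\bigr)-\dim R_2=0$, the restriction of $\kappa$ to $T_G\calG_k$ is a linear isomorphism onto $R_2$. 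By the inverse function theorem $\kappa|_{\calG_k}$ is a local diffeomorphism at $G$; in particular it is open at every point of $\tau(D)$, and $G$ is isolated in the fiber $\kappa^{-1}(\sigma_k(\bdl))\cap\calG_k$.

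\textbf{Finiteness and conclusion.} To get $\tau(D)\subseteq F$ I would promote ``$G$ isolated in its fiber'' to ``the whole Gram spectrahedron $\kappa^{-1}(\sigma_k(\bdl))\cap\calG_k$ is finite'': this fiber is compact because $\Sigma_X$ is pointed, the expected dimension of $\bigl(G+(I_X)_2\bigr)\cap\{\rank\le k\}$ is $0$ (the codimensions add up, matching the transversality established at $G$), and compactness then forces finiteness. For the strong openness of $\kappa|_F$, note that $F$ is open in $\calG_k$ since fiber dimension is upper semicontinuous, that $\overline{\kappa(F)}=\Sigma_X$ because $\kappa(\tau(D))=\sigma_k(D)$ is open and dense in $\interior\Sigma_X$, that at rank-$k$ points of $F$ openness is exactly the local-diffeomorphism property above, and that at lower-rank points of $F$ it follows from the local cone structure of $\calG_k$. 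The final sentence is then the mechanism stated before the lemma: for $\bdl\in\tau^{-1}(F)$ and any target $\bar f$, openness of $\tau$ and strong openness of $\kappa|_F$ show that $\sigma_k$ maps every neighborhood of $\bdl$ onto a set containing a neighborhood of $\sigma_k(\bdl)$ relative to $\Sigma_X$, which meets the segment joining $\sigma_k(\bdl)$ to the nearest point of $\Sigma_X$ to $\bar f$ and hence contains points of strictly smaller objective value, unless $\bdl$ is already a global minimum.

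The main obstacle is the bookkeeping at non-maximal-rank Gram matrices: the tangent-space argument is clean only at rank-$k$ points whose tuple has no common zero, so showing that the \emph{entire} fiber over $\sigma_k(\bdl)$ is finite (not merely discrete at $G$) and that $\kappa|_F$ is strongly open along all of $F$ requires a more careful global dimension count ruling out positive-dimensional families of low-rank Gram matrices, and an analysis of the singularities of $\calG_k$ that matches the cited earlier results on Gram spectrahedra for varieties of minimal degree.
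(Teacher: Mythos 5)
Your first half---the dimension count showing that $\kappa$ restricted to $\calG_k$ is a local diffeomorphism at every point of $\tau(D)$, using quadratic deficiency zero so that $\dim(\calG_k\setminus\calG_{k-1})=k(n+1)-\binom{k}{2}=\dim_\bbR R_2$---is essentially the paper's argument for $\tau(D)\subseteq F$ (the paper phrases it through surjectivity of $\diff_\bdl\sigma_k$ and the factorization $\sigma_k=\kappa\circ\tau$, you through the tangent space of the determinantal variety at a rank-$k$ point; both are fine). The passage from ``$\tau(\bdl)$ isolated in its fiber'' to ``the whole fiber is finite'' is also treated only briefly in the paper (it invokes algebraicity of $\calG_k$ and $\tau$), so the worry you flag there is not where you diverge from the intended proof.

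The genuine gap is in the strong openness of $\kappa|_F$. Your argument only establishes openness of $\kappa$ at points of $\tau(D)$; but $F$ is strictly larger than $\tau(D)$---it contains rank-$k$ Gram matrices whose linear forms have common complex zeros on $X$, as well as lower-rank Gram matrices---and the entire content of the lemma is openness at those points, since they are exactly the candidates for spurious local minima. Your appeals to ``upper semicontinuity of fiber dimension'' and to ``the local cone structure of $\calG_k$'' at lower-rank points do not supply this. The paper's mechanism is topological: it introduces the locus $\Delta\subset R_2$ of quadratic forms singular somewhere on $X$, observes that $\calG_k\setminus\kappa^{-1}(\Delta)\subseteq\tau(D)$ so that $\kappa|_F$ is already open off $\kappa^{-1}(\Delta)$, shows $\codim(\Delta\cap\interior{\Sigma_X})\ge 2$ so that $\Delta\cap\Sigma_X$ is a closed nondense set separating no region, and then invokes Whyburn's theorem on light mappings: a map with finite (hence totally disconnected) fibers---which is precisely why $F$ is defined by the finiteness condition---that is open outside a closed set whose image separates no region is strongly open everywhere. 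Without this step, or some substitute that handles the bad locus, the concluding claim that no $\bdl\in\tau^{-1}(F)$ can be a spurious local minimum is unsupported exactly at the points where it matters.
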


\begin{proof}
    For any \(\bdl\in D\), the Jacobian matrix \(\diff_\bdl\sigma_k\) has full rank, so \(\sigma_k\) is locally surjective at \(\bdl\).
    The factorization \(\sigma_k=\kappa\circ\tau\) implies that \(\kappa\) is also locally surjective at \(\tau(\bdl)\), the Gram matrix associated with \(\bdl\). 
    As \(X\) is a variety of minimal degree and thus has 0 quadratic deficiency~\cite[Section 3]{blekherman2016sums}, the dimension of the quadratic forms \(\dim_\bbR{R_2}=k(n+1)-\binom{k}{2}\), which is equal to the dimension of \(\calG_k\setminus\calG_{k-1}\) as a quotient manifold of \(R_1^k\) under the \(k\times k\) orthogonal group.
    Therefore, \(\tau\) induces a local diffeomorphism between \(\calG_k\setminus\calG_{k-1}\) and \(R_2\) at \(\bdl\), from which we see that \(\tau(\bdl)\) must be an isolated point in the preimage set \(\kappa^{-1}(\sigma_k(\bdl))\).
    The preimage set \(\kappa^{-1}(\sigma_k(\bdl))\) must then be finite because \(\calG_k\) and \(\tau\) are algebraic (i.e., defined by polynomials in finitely many variables), so \(\tau(\bdl)\in F\).
    Thus we have shown \(\tau(D)\subseteq F\).

    Now let \(\Delta\subset R_2\) denote the Zariski closure of all quadratic forms that are singular at some point on \(X\).
    Note that any common zero of the tuple of linear forms \(\bdh=(h_1,\dots,h_k)\in R_1^k\) is a singular point of \(V(\sigma_k(\bdh))\subset X\).
    This implies that \(R_1^k\setminus\sigma_k^{-1}(\Delta)\subseteq D\), and thus \(\calG_k\setminus\kappa^{-1}(\Delta)\subseteq\tau(D)\).
    Since \(\kappa\) is locally surjective at every point of \(\tau(D)\), \(\kappa\vert_{F\setminus\kappa^{-1}(\Delta)}\) is strongly open.
    We also have \(\codim{(\Delta\cap\interior{\Sigma_X})}\ge2\) in \(R_2\) as a semialgebraic set because \(\Delta\cap\interior{\Sigma_X}\) is contained in the singular locus of the discriminant of the second Veronese re-embedding of \(X\).
    Thus \(\Delta\cap\Sigma_X\) is a closed nondense subset of \(\Sigma_X\) that separates no region, which shows that \(\kappa\vert_F\) is strongly open by the properties of light mappings~\cite[Chapter VII, Theorem 2.3]{whyburn2015topological}.
    The last assertion then follows from the definition of strongly openness, and the fact that any open neighborhood of \(\sigma_k(\bdl)\) not minimizing \(\nVert{\cdot-\bar{f}}\) must contain a point that is closer to \(\bar{f}\).
\end{proof}

Lemma~\ref{lemma:FiniteGramNotSpuriousMinimum} leads to a similar result to Lemma~\ref{lemma:RationalNormalCurves} (also the main theorem in~\cite{legat2023low}) by the following observation:
any nonnegative binary form has only finitely many inequivalent representations as a sum of two squares, which correspond to different combinations of the linear forms in its linear factorization over \(\bbC\).
Thus by the Veronese embedding, we know that on a rational normal curve \(X\), each form in \(\Sigma_X\) has finitely many Gram matrices in \(\calG_2\).

We proceed to examine more closely the quadratic forms in \(\Sigma_X\) with infinitely many Gram matrices in \(\calG_k\).
A useful technique is projection away from common zeros of \(\bdl\) on \(X\).
To be more precise, let \(E\subset\bbP^n\) be a \((d-1)\)-dimensional real linear subspace spanned by $d$ points on $X$ (that can be real or come in complex pairs of points on \(X\)).
The projection away from \(E\) defines a rational map \(\pi_E:\bbP^n\dashrightarrow\bbP^{n-d}\).
We define \(X'\subseteq\bbP^{n-d}\) to be the Zariski closure of the image of \(X\setminus E\) under \(\pi_E\), which induces an inclusion \(\pi_E^\sharp:S'\to S\) where \(\bbP^{n-d}=\mathrm{Proj}(S')\).
It is well-known that \(X'\) is again a variety of minimal degree~\cite{eisenbud1987varieties} and is totally real because \(\pi_E\) sends real points to real points.
Thus the ideal \(I_{X'}\subset S'\) is generated in degree 2 and we can identify linear forms in \(R':=S'/I_{X'}\) with those in \(R_1\) vanishing on \(E\), and Gram matrices \(\calG'_k\subseteq S'_2\) with those in \(\calG_k\subseteq S_2\) of which \(E\) is contained in the kernels.

We now use the projection technique to show Theorem~\ref{thm:SurfacesOfMinimalDegree}.

\begin{proof}[Proof of Theorem~\ref{thm:SurfacesOfMinimalDegree}]
    Assume for contradiction that \(\bdl\) is a spurious local minimum in the interior. Since $\sigma_3(\bdl)$ is in the interior, the linear forms \(l_1,l_2,l_3\) do not share a common real zero on \(X\).
    By Lemma~\ref{lemma:FiniteGramNotSpuriousMinimum}, there are infinitely many Gram matrices in \(\calG_3\) associated with \(\sigma_3(\bdl)\). Since a Gram matrix associated with linear forms that do not have a common zero on $X$ is locally isolated as the Jacobian matrix has full rank, the linear series associated with these Gram matrices must have common zeros. A common zero of $\bdl$ on $X$ is necessarily a singularity of the subscheme  \(V(\sigma_3(\bdl))\) on \(X\), which only has finitely many singularities by assumption.
    Thus there exists a complex pair of points \(p,\bar{p}\in V(\sigma_3(\bdl))\) such that there are infinitely many Gram matrices in \(\calG_3\) associated with linear series vanishing at \(p\) and \(\bar{p}\).
    In particular, \(p\) and \(\bar{p}\) are smooth points on \(X\) because \(X\) can have at most one singular point (which is the cone over a rational normal curve), by the classification of varieties of minimal degree~\cite{eisenbud1987varieties}.
    Now we project away from the real subspace \(E\) spanned by \(p\) and \(\bar{p}\), and denote \(X':=\pi_E(X)\) with \(R':=\bbR[X']\).
    Here, \(X'\subseteq\bbP^{n-2}\) is again a real surface of minimal degree.
    The preimage of \(\sigma_3(\bdl)\) under the pullback map \(\pi_E^\sharp\) still lies in the interior of \(\Sigma_{X'}=\Sigma_X\cap R'_2\subset R'_2\) because \(R'_2\) passes through the interior of \(\Sigma_X\), and defines a reduced curve and has infinitely many Gram matrices in \(\calG'_3\).
    Thus by repeating the projection if needed, we may assume that either of the following situations happens:
    \begin{enumerate}
        \item \(X'=\bbP^2\), which is a contradiction because any quadratic form on \(X'\) must have a unique rank-\(3\) Gram representation;
        \item \(X'\subset\bbP^3\) is a quadric surface, which we assume is defined by a quadratic form \(Q\in S_2\).
        By Lemma~\ref{lemma:FiniteGramNotSpuriousMinimum}, if \(\bdl\) is a spurious local minimum, then \(\sigma_k(\bdl)\) has infinitely many Gram matrices. 
        As the preimage \(\sigma_k(\bdl)+\bbR\cdot Q\subset S_2\) under the quotient map \(\calG_{n+1}\to S_2\) is a line, the only possibility is that it intersects with a face of the cone of positive semidefinite quadratic forms in \(S_2\).
        Thus \(\sigma_3(\bdl)\) is also contained in a face of \(\Sigma_X\) and must lie on the boundary, which gives a contradiction.
        \qedhere
    \end{enumerate}
\end{proof}

In Example~\ref{ex:TernaryQuarticsBoundaryPoint}, we have seen that the spurious first-order stationary point on the Veronese surface is associated with infinitely Gram matrices in the obvious way: there are infinitely many representations of a binary quartic forms as a sum of three squares.
From Theorem~\ref{thm:SurfacesOfMinimalDegree}, we next show that this is exactly the reason for any boundary point to be associated with infinitely many Gram matrices. 

\begin{proposition}\label{prop:VeroneseSurfaceMinimaBinary}
    Consider the Veronese surface $X=\nu_2(\bbP^2)\subset\bbP^5$ and $k=3$.
    If $\bdl=(l_1,l_2,l_3)\in R_1^3$ lies on the boundary and has infinitely many Gram matrices associated with \(\sigma_3(\bdl)\), then $\sigma_3(\bdl)$ corresponds to a binary quartic form on $\bbP^2$ (up to a linear change of coordinates).
\end{proposition}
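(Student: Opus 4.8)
The plan is to reduce, through a chain of linear projections away from real zeros, to a binary quartic on a conic; the mechanism that makes the reduction possible is the elementary fact that if a sum of squares vanishes at a real point $p$ then \emph{every} Gram matrix of it is degenerate at $p$, so that all Gram matrices descend simultaneously under the projection away from $\nu_2(p)$. First, since $X$ is a surface of minimal degree and $3=\dim(X)+1$, if $l_1,l_2,l_3$ had no common zero on $X$ then $\sAngle{\bdl}_2=R_2$, the differential $\diff_\bdl\sigma_3$ would be surjective, and $\sigma_3(\bdl)$ would be interior to $\Sigma_X$; hence $l_1,l_2,l_3$ share a common real zero $p\in\bbP^2$, so $\sigma_3(\bdl)(p)=0$. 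For \emph{any} $F\in\calG_3$ with $\kappa(F)=\sigma_3(\bdl)$, writing $F=\sum_iL_iL_i^\transpose$ and evaluating $\sigma_3(\bdl)=\sum_i\bar l_i^2$ at $p$ gives $\sum_i\bar l_i(p)^2=0$, so each $\bar l_i$ vanishes at $p$, i.e.\ $\nu_2(p)$ lies in the kernel of $F$. Therefore every Gram matrix of $\sigma_3(\bdl)$ descends to a Gram matrix on $X':=\pi_{\nu_2(p)}(X)\subseteq\bbP^4$, a surface of minimal degree (the smooth cubic scroll), and $\sigma_3(\bdl)$, regarded as an element $\sigma'\in R'_2$, still has infinitely many rank-$3$ Gram matrices.

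The induction is controlled by the order of vanishing $m$ of $\sigma_3(\bdl)$ at $p$, which is even because the leading form at a zero of a nonnegative quartic is itself a nonnegative form and so is not of odd degree; thus $m\in\{2,4\}$. If $m=4$, then $\sigma_3(\bdl)$ is a binary quartic in coordinates with $p=[0\mathcolon0\mathcolon1]$ and we are done. If $m=2$, then on the blow-up of $X$ at $\nu_2(p)$ the divisor of $\sigma'$ is the strict transform of $V(\sigma_3(\bdl))$ together with $(m-2)=0$ copies of the exceptional line, so $V(\sigma')$ is reduced as soon as $V(\sigma_3(\bdl))$ is; and a nonreduced nonnegative ternary quartic is $c^2$ or $\ell^2h$ with $h$ a nonnegative conic, each of which is quickly checked to have a unique rank-$3$ Gram matrix, contradicting the hypothesis. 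Hence we may assume $V(\sigma')$ is reduced.

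Now iterate on $X'$. Arguing as in the proof of Lemma~\ref{lemma:FiniteGramNotSpuriousMinimum}, the infinitely many Gram matrices of $\sigma'$ cannot all come from triples without a common zero on $X'(\bbC)$, so infinitely many share a common zero, which is one of the finitely many singular points of the reduced curve $V(\sigma')$; by pigeonhole some such point is shared by infinitely many, and it must be real, for otherwise projecting away the real secant span of its conjugate pair would land us on $\bbP^2$, which carries unique Gram matrices. Call this point $q'$; then $\sigma'(q')=0$, so once again all Gram matrices of $\sigma'$ descend under projection away from $q'$, producing a quadric surface $X''=V(Q)\subseteq\bbP^3$ on which $\sigma''$ still has infinitely many rank-$3$ Gram matrices. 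Since the kernel of $\kappa$ is now $\bbR\cdot Q$, the rank-$3$ Gram matrices of $\sigma''$ are the rank-deficient points of the line $\{F_0+tQ:t\in\bbR\}$; having infinitely many of them forces $\det(F_0+tQ)\equiv0$, which is impossible if $Q$ has full rank, and when $\rank(Q)=3$ (so $X''$ is a cone with vertex $v$) forces the $t^3$-coefficient, a nonzero multiple of $\sigma''(v)$, to vanish.

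Thus $X''$ is a quadric cone with $\sigma''(v)=0$, and once more every Gram matrix of $\sigma''$ descends, this time under the projection away from $v$ onto the base conic $\calC=\nu_2(\bbP^1)\subseteq\bbP^2$. Consequently $\sigma_3(\bdl)$ is identified with a quadratic form on $\nu_2(\bbP^1)$, that is, with a binary quartic, and the composite of all the projections used is a linear map $\bbP^2\dashrightarrow\bbP^1$; hence $\sigma_3(\bdl)$ is a binary quartic up to a linear change of coordinates. I expect the main obstacle to be the bookkeeping in the middle of the argument: verifying that the projected subscheme stays reduced so the induction can continue (which rests on the dichotomy $m\in\{2,4\}$ and the blow-up computation of $\operatorname{div}\sigma'$), and, correspondingly, that the descent must terminate either by reaching $m=4$ or at a quadric cone whose vertex is a zero of the pushed-forward form.
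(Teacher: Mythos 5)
Your overall strategy---iterated linear projections away from points lying in the kernel of every Gram matrix, terminating on a quadric in $\bbP^3$ where the Gram fibre is a line---is essentially the paper's, and several steps (the descent of all Gram matrices past a real zero of the form; the argument that $\det(F_0+tQ)\equiv 0$ forces $X''$ to be a cone with $\sigma''(v)=0$) are correct and cleanly done. But the case analysis contains a genuine error. The claim that a nonreduced nonnegative ternary quartic ($c^2$ or $\ell^2h$) is ``quickly checked to have a unique rank-$3$ Gram matrix, contradicting the hypothesis'' is false: $(x_0^2+x_1^2)^2$ is nonreduced, lies on the boundary (it vanishes at $[0\mathcolon 0\mathcolon 1]$), and its Gram matrices restricted to $\linspan\{x_0^2,x_0x_1,x_1^2\}$ form the one-parameter family $\bigl(\begin{smallmatrix}1&0&t\\0&2-2t&0\\t&0&1\end{smallmatrix}\bigr)$, of rank $3$ for every $t\in(-1,1)$. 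So no contradiction is available in this branch; you must instead show that the nonreduced boundary quartics with non-unique rank-$3$ Gram matrices are precisely the binary ones (for $c$ indefinite, or for $\ell^2h$, every square in a representation is divisible by the repeated factor, forcing uniqueness; for $c$ of rank $2$ the form is already binary and the conclusion holds). Note the paper uses nonreducedness \emph{positively}: in its main branch it deduces that the quartic is a perfect square whose square root has rank $2$, which is exactly how binary-ness arises.

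Two smaller issues. When the common zero of infinitely many Gram matrices on the cubic scroll $X'$ is a conjugate pair, projecting from its secant line does not always ``land us on $\bbP^2$'': if the pair lies on the directrix of $X'$ (the exceptional line over $p$), the secant line is contained in $X'$ and the image of the projection is the conic $\nu_2(\bbP^1)$ rather than a plane---again the conclusion (binary quartic) holds there, but the stated contradiction does not. And two assertions are left unjustified: the opening deduction only yields a common \emph{complex} zero from non-surjectivity of $\diff_\bdl\sigma_3$, whereas a common \emph{real} zero requires $\Sigma_X=\rmP_X$ (a boundary psd form has a real zero); and the closing claim that the composite projection is a linear map $\bbP^2\dashrightarrow\bbP^1$ needs the (easy but unstated) fact that a $3$-dimensional space of conics mapping $\bbP^2$ onto a conic must be $\linspan\{s^2,st,t^2\}$ for some linear forms $s,t$.
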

\begin{proof}
    As $\bdl$ is on the boundary, $l_1$, $l_2$, and $l_3$ must have a common real zero on \(X\)~\cite[Theorem 1.1]{blekherman2016sums}.
    Suppose that there exist two real points \(p,q\) in the zero locus of \(l_1,l_2,l_3\) on \(X\). 
    By changing the coordinates in \(\bbP^2\), we may assume that \(p=\nu_2([0\mathcolon0\mathcolon1])\) and \(q=\nu_2([0\mathcolon1\mathcolon0])\).
    Thus the projection \(X':=\pi_{p,q}(X)\subset\bbP^3\) is a smooth quadric surface (as shown in Figure~\ref{fig:PolytopeVeroneseProjection2RealPoints}), which implies that preimage \((\kappa')^{-1}(g)\subset S'_2\) is a line, where \(\kappa':S'_2\to R'_2\) is the canonical quotient map and \(g:=(\pi_{p,q}^\sharp)^{-1}(\sigma_3(\bdl))\in R'_2\) is the preimage of \(\sigma_3(\bdl)\) under the pullback map \(\pi_{p,q}^\sharp\).
    However, as the kernel of any Gram matrix in \(\kappa^{-1}(\sigma_3(\bdl))\) contains \(p\) and \(q\), there are infinitely many Gram matrices associated with \(g\).
    This is only possible when \((\kappa')^{-1}(g)\) intersects with the boundary of the positive semidefinite cone in \(S'_2\), in which case \(X'\) is singular.
    The contradiction with \(X'\) being smooth shows that \(l_1\), \(l_2\), and \(l_3\) can share at most one common real zero on \(X\).

    Now we assume \(p=\nu_2([0\mathcolon0\mathcolon1])\) is the unique real zero of \(l_1\), \(l_2\), and \(l_3\) on \(X\), and let \(X':=\pi_p(X)\subset\bbP^4\) (Figure~\ref{fig:PolytopeVeroneseProjection1RealPoint}).
    Since \(p\) is a real zero of \(l_1,l_2,l_3\), there exists \(l'_i\in R'_1\) corresponding to \(l_i\) for \(i=1,2,3\).
    Now we further assume that \(\sum_{i=1}^{3}(l'_i)^2\) does not have a real zero on \(X'\).
    Since it is associated with infinitely many Gram matrices, by the proof of Theorem~\ref{thm:SurfacesOfMinimalDegree} we see that \(V(\sum_{i=1}^{3}(l'_i)^2)\subset X'\) must be nonreduced.
    Consequently, the quartic form associated with it must be divisible by a square.
    By assumption, it has a unique real zero in \(\bbP^2\) so \(\nu_2^\sharp(\sigma_3(\bdl))=q^2\) for some \(q\in\bbR[x_0,x_1,x_2]_2\) that also has a unique real zero.
    Thus \(q\) has rank 2 as a quadratic form in \(x_0\), \(x_1\), and \(x_2\), which means that there exists a linear change of coordinates \(\alpha:\bbP^2\to\bbP^2\) such that \(q\circ\alpha\) is a binary quadratic form.
    Consequently, \(\sigma_3(\bdl)\) corresponds to a binary quartic form, as desired.

    It remains to discuss the case where \(\sum_{i=1}^{3}(l'_i)^2\) has a real zero \(p'\) on \(X'\), which must be a common zero of \(l'_1,l'_2,l'_3\).
    By assumption, \(p\) is the unique real zero on \(X\), so \(p'\) is in the image of the exceptional divisor of the blow-up of \(X\) at \(p\).
    In other words, \(p'\) corresponds to a real tangent direction of \(X\) at \(p\), so \(\nu_2^\sharp\circ\pi_p^\sharp(l'_i)(x_0,x_1,1)\) is an inhomogeneous quadratic polynomial in the variables \(x_0\) and \(x_1\) whose gradient at the point \((0,0)\) vanish at a common direction for each \(i=1,2,3\). 
    By changing coordinates in the variables \(x_0\) and \(x_1\), we may assume that this direction is the \(x_1\)-direction, which means the quadratic form \(\nu_2^\sharp(l_i)=\nu_2^\sharp\circ\pi_p^\sharp(l'_i)\) does not contain the monomial \(x_1x_2\) for each \(i=0,1,2\).
    Thus if we project \(X'\) away from \(p'\) through map \(\pi_{p'}\), we get a singular quadric surface \(X''\subset\bbP^3\) which is a cone over the rational normal curve (as illustrated by Figure~\ref{fig:PolytopeVeroneseProjection1RealTangent}).
    The image of \(\sum_{i=1}^{3}(l'_i)^2\) under the pullback map \(\pi_{p'}^\sharp\) is still associated with infinitely many Gram matrices, one of which we denote as \(Q\) on \(\bbP^3\).
    This can only happen as the unique singular point of \(X''\) lies in the kernel of \(Q\) since \(\codim(X'')=1\), which
implies that the quadratic forms \(\nu_2^\sharp(l_i)\) do not involve the monomial \(x_0x_2\) either, and thus \(\nu_2^\sharp(\sum_{i=1}^{3}l_i^2)\) is a binary form under change of coordinates in \(\bbP^2\).
    This completes the proof.
\end{proof}

\begin{figure}[htbp]
    \centering
    \begin{subfigure}{0.2\textwidth}
        \centering
        \includegraphics[width=3cm]{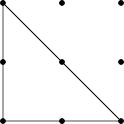}
        \caption{\(X\subset\bbP^5\)}
    \end{subfigure}
    \hspace{2mm}
    \begin{subfigure}{0.2\textwidth}
        \centering
        \includegraphics[width=3cm]{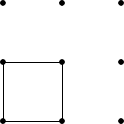}
        \caption{\(\pi_{p,q}(X)\subset\bbP^3\)}
        \label{fig:PolytopeVeroneseProjection2RealPoints}
    \end{subfigure}
    \hspace{2mm}
    \begin{subfigure}{0.2\textwidth}
        \centering
        \includegraphics[width=3cm]{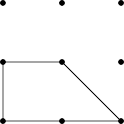}
        \caption{\(\pi_p(X)\subset\bbP^4\)}
        \label{fig:PolytopeVeroneseProjection1RealPoint}
    \end{subfigure}
    \hspace{2mm}
    \begin{subfigure}{0.2\textwidth}
        \centering
        \includegraphics[width=3cm]{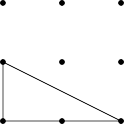}
        \caption{\(\pi_{p'}(X')\subset\bbP^3\)}
        \label{fig:PolytopeVeroneseProjection1RealTangent}
    \end{subfigure}
    \caption{Polygons corresponding to the Veronese surface and its projections}
\end{figure}

\subsection{Proof of Theorem~\ref{thm:VeroneseSurfaceMinima}}

Before proving Theorem~\ref{thm:VeroneseSurfaceMinima}, we first establish a lemma that partially extends Corollary~\ref{cor:NoSpuriousMinimaWithMoreSquares} on varieties of minimal degree.

\begin{lemma}\label{lemma:InteriorSpuriousLocalMinimaMoreSquares}
    Suppose that \(X\) is a variety of minimal degree, and any spurious second-order stationary point in \(R_1^k\) is on the boundary for \(k=\dim{X}+1\).
    Then for any \(m\ge k\), if \(\bdl\in R_1^{m}\) is a spurious second-order stationary point, then \(\sigma_m(\bdl)\) also lies on the boundary of \(\Sigma_X\).
\end{lemma}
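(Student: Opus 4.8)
The plan is to argue by induction on $m$, the base case $m=k=\dim X+1$ being exactly the hypothesis. For the inductive step, fix a spurious second-order stationary point $\bdl=(l_1,\dots,l_m)\in R_1^m$ with $m>k$ and assume, towards a contradiction, that $\sigma_m(\bdl)$ lies in the interior of $\Sigma_X$. Since $X$ is a totally real variety of minimal degree we have $\Sigma_X=\rmP_X$, and its interior consists exactly of the quadratic forms that are strictly positive on $X(\bbR)$; hence the assumption says that $l_1,\dots,l_m$ have no common real zero on $X$. By Corollary~\ref{cor:LinearDependenceNotLocalMinimum} the forms $l_1,\dots,l_m$ are linearly independent, so $W:=\linspan_\bbR\{l_1,\dots,l_m\}\subseteq R_1$ has dimension $m$.

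The first step reduces to $(m-1)$-tuples. For each hyperplane $H\subset W$ pick an orthogonal matrix $O\in\bbR^{m\times m}$ whose first $m-1$ rows span $H$ under the identification $W\cong\bbR^m$ given by the basis $l_1,\dots,l_m$. By Corollary~\ref{cor:LocalMinimaOrthogonalTransform}, $O\bdl$ is again a spurious second-order stationary point, and then by the contrapositive of Corollary~\ref{cor:NoSpuriousMinimaWithMoreSquares} so is its truncation $\bdl_H:=((O\bdl)_1,\dots,(O\bdl)_{m-1})\in R_1^{m-1}$, whose entries span $H$. By the inductive hypothesis $\sigma_{m-1}(\bdl_H)$ lies on the boundary of $\Sigma_X=\rmP_X$, hence vanishes at some real point of $X$; as it is a sum of squares of forms spanning $H$, that point is a common real zero of $H$ on $X$. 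Thus $V(H)\cap X(\bbR)\neq\varnothing$ for every hyperplane $H\subset W$.

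The key step is a projection and dimension count. Let $\Gamma:=V(W)\subseteq\bbP^n$ be the common zero locus of $W$ and let $\pi=[l_1\mathcolon\cdots\mathcolon l_m]\colon\bbP^n\dashrightarrow\bbP^{m-1}$ be the associated linear projection with centre $\Gamma$. If for some hyperplane $H$ the set $V(H)\cap X(\bbR)$ meets $\Gamma$, then that point is a common real zero of all of $l_1,\dots,l_m$, contradicting the interiority of $\sigma_m(\bdl)$. Otherwise, for every hyperplane $H\subset W$ choose $q_H\in(V(H)\cap X(\bbR))\setminus\Gamma$. One checks that $\pi$ collapses $V(H)\setminus\Gamma$ to the single point $y_H\in\bbP^{m-1}$ dual to $H$ (if $H=\{\sum_j a_jl_j:\langle a,b\rangle=0\}$ then $(l_1(x),\dots,l_m(x))\in\bbR b$ for every $x\in V(H)\setminus\Gamma$, so $\pi(x)=[b_1\mathcolon\cdots\mathcolon b_m]$). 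Hence $y_H=\pi(q_H)\in\overline{\pi(X)}(\bbR)$ for every hyperplane $H$, and as $H$ ranges over all real hyperplanes of $W$ the points $y_H$ range over all of $\bbP^{m-1}(\bbR)$. Therefore $\overline{\pi(X)}\supseteq\bbP^{m-1}(\bbR)$; since $\bbP^{m-1}(\bbR)$ is Zariski dense in $\bbP^{m-1}$, this forces $\overline{\pi(X)}=\bbP^{m-1}$, so $m-1\le\dim\overline{\pi(X)}\le\dim X=k-1$, contradicting $m>k$. Hence $\sigma_m(\bdl)$ lies on the boundary of $\Sigma_X$.

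The main obstacle — more precisely, the single idea that makes the argument work — is this last projection step: recognizing that if none of the common real zeros of the hyperplane truncations collapses to the centre $\Gamma$, then under $\pi$ they are forced to surject onto $\bbP^{m-1}(\bbR)$, which overflows the dimension of $\overline{\pi(X)}$. Everything else is bookkeeping with Corollaries~\ref{cor:LinearDependenceNotLocalMinimum}, \ref{cor:LocalMinimaOrthogonalTransform} and~\ref{cor:NoSpuriousMinimaWithMoreSquares}, together with the identification of the interior and boundary of $\Sigma_X$ with strict positivity and the existence of a real zero, which is valid because $X$ is a totally real variety of minimal degree.
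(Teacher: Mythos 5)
Your proof is correct and follows essentially the same route as the paper's: an induction on $m$ built on Corollaries~\ref{cor:LinearDependenceNotLocalMinimum}, \ref{cor:LocalMinimaOrthogonalTransform}, and~\ref{cor:NoSpuriousMinimaWithMoreSquares}, whose crux is the dimension count showing that the evaluation map from $X(\bbR)$ to the hyperplanes of $W=\sAngle{\bdl}_1$ (your $\pi$, the paper's $p\mapsto H_p$) cannot hit every hyperplane since $\dim X<\dim\bbP(W^\vee)$. You merely run the logic in the contrapositive direction — assuming every hyperplane truncation is spurious, hence on the boundary, hence has a real zero, and deriving the dimension contradiction — whereas the paper exhibits one hyperplane $H$ with $V(H)\cap X(\bbR)=\varnothing$ and propagates non-spuriousness back up to $\bdl$; the content is identical.
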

\begin{proof}
    We prove the assertion by induction on \(m=k,k+1\dots,n+1\), where it is trivially true for the base case \(m=k\).
    For \(m+1\) squares, take any \(\bdl=(l_1,\dots,l_{m+1})\in R_1^{m+1}\) that is a spurious second-order stationary point and \(\sigma_{X,m+1}(\bdl)\notin\partial{\Sigma_X}\).
    Since \(X\) is a variety of minimal degree, this implies that \(l_1,\dots,l_{m+1}\) do not share a common real zero on \(X\)~\cite[Theorem 1.1]{blekherman2016sums}.
    Let \(L:=\sAngle{\bdl}_1=\linspan_\bbR\{l_1,\dots,l_{m+1}\}\) denote all linear forms generated by \(l_1,\dots,l_{m+1}\), and \(H_p:=\{l\in L:l(p)=0\}\) denote linear forms vanishing at a real point \(p\in X(\bbR)\). 
    We note that \(H_p\) is a hyperplane in \(L\) for any \(p\in X(\bbR)\) because \(p\) is not a common zero of \(l_1,\dots,l_{m+1}\).
    Let \(L^\vee:=\operatorname{Hom}_\bbR(L,\bbR)\) denote all linear functionals on \(L\), and the set of all hyperplanes in \(L\) is \(\bbP(L^\vee)\), which as a dimension of \(\dim_\bbR{L}-1=m\), as \(l_1,\dots,l_{m+1}\) must be linearly independent by Corollary~\ref{cor:LinearDependenceNotLocalMinimum}.
    Thus the union of all such hyperplanes \(\coprod_{p\in X(\bbR)}H_p\subset X\times\bbP(L^\vee)\) has a dimension of at most \(\dim(X)\), so the dimension of its image under \(X\times\bbP(L^\vee)\to\bbP(L^\vee)\) also does not exceed \(\dim(X)<m\).
    Consequently, there exists a hyperplane \(H\in\bbP(L^\vee)\) such that \(H\neq H_p\) for any \(p\in X(\bbR)\).

    Take an orthonormal basis of \(H\), i.e., \(h_1,\dots,h_m\) spanning \(H\) such that \(h_i=\sum_{j=1}^{m+1}c_{i,j}l_{j}\), for some orthonormal vectors \(c_i=(c_{i,1},\dots,c_{i,m+1})\in\bbR^{m+1}\), \(i=1,\dots,m\).
    Note that \(\sum_{i=1}^m h_i^2\notin\partial{\Sigma_X}\).
    By the induction hypothesis, \((h_1,\dots,h_m)\) is not a spurious second-order stationary point.
    By completing \(c_1,\dots,c_m\) to an orthonormal basis \(c_1,\dots,c_{m+1}\) of \(\bbR^{m+1}\), let \(h_{m+1}:=\sum_{j=1}^{m+1}c_{m+1,j}l_{j}\in R_1\).
    Corollary~\ref{cor:NoSpuriousMinimaWithMoreSquares} implies that \((h_1,\dots,h_{m+1})\) is not a spurious second-order stationary point, and thus neither is \((l_1,\dots,l_{m+1})\) by Corollary~\ref{cor:LocalMinimaOrthogonalTransform}.
    This completes the induction step.
\end{proof}

The following lemma contains some key technical claims for Theorem~\ref{thm:VeroneseSurfaceMinima}.
\begin{lemma}\label{lemma:VeroneseZerosStructure}
Let $X=\nu_2(\bbP^2)\subset\bbP^5$ be the Veronese surface and suppose $\bdl:=(l_1,l_2,l_3)$ is a tuple of three linearly independent linear forms defining an ideal $I:=\sAngle{\bdl}$ in the homogeneous coordinate ring $R$ of $X$. If $l_1,l_2,l_3$ have no common real zeroes on $X$ then the following statements hold:
\begin{enumerate}
\item The zero set \(V(l_1,l_2,l_3)\subset X\) is finite.
\label{lemma:Veronese:claim1}
\item The saturation $I_{\operatorname{sat}}$ of the ideal $I$ is radical and defines either the empty set or a conjugate pair $\{p,\bar{p}\}$ of complex  points on $X$.
\label{lemma:Veronese:claim2}
\item The degree-$2$ part of the ideal $I$ coincides with its saturation.
\label{lemma:Veronese:claim3}
\item If the forms $l_1,l_2,l_3$ have as common zeroes the conjugate pair $\{p,\bar{p}\}$ then the image of the evaluation map $\operatorname{ev}_p$ is a one-dimensional vector space, where
\[
    \begin{aligned}
    \operatorname{ev}_p:&&\syzygy_1(\bdl)&\rightarrow \bbC^3\\
    &&(h_1,h_2,h_3)&\mapsto(h_1(p),h_2(p),h_3(p)).
    \end{aligned}
\]
\label{lemma:Veronese:claim4}
\end{enumerate}
\end{lemma}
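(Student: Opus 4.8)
The plan is to transport the whole lemma to $\bbP^2$. Identifying a linear form on $X=\nu_2(\bbP^2)$ with a quadratic form on $\bbP^2$ and a quadratic form on $X$ with a quartic on $\bbP^2$, the tuple $\bdl=(l_1,l_2,l_3)$ becomes a triple $q_1,q_2,q_3$ of linearly independent conics, the common zero set $V(\bdl)\subset X$ becomes the base scheme $Z$ of the net $N:=\linspan\{q_1,q_2,q_3\}$ of conics, and $\sAngle{\bdl}$ becomes the ideal $J:=(q_1,q_2,q_3)$ of $S:=\bbR[x_0,x_1,x_2]$. Since the $q_i$ are real, $Z$ is stable under complex conjugation, and the hypothesis says exactly that $Z$ has no real point. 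For Claim~\ref{lemma:Veronese:claim1}: were $Z$ infinite it would contain a curve component $C$; as $C\subseteq V(q_i)$ for each $i$ we get $\deg C\le 2$, and $\deg C=2$ would force $q_1,q_2,q_3$ proportional, so $C$ is a line $\ell$ dividing every $q_i$; a non-real $\ell$ would have $\ell\bar\ell\mid q_i$, again forcing proportionality, so $\ell$ is real --- but then $V(\ell)\subseteq Z$ has real points, a contradiction. Hence $Z$ is finite.

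For Claim~\ref{lemma:Veronese:claim2} I would bound $\deg Z$. The net $N$ is a three-dimensional subspace of the six-dimensional space of conics through $Z$, so $Z$ imposes at most three conditions on conics; moreover the complete linear system of conics through $Z$ has no fixed component, since a fixed line would lie in $Z$, contradicting finiteness. By Bézout, $\deg Z\le 4$ (the base scheme lies in a zero-dimensional intersection of two members of $N$). If $\deg Z=4$, then $Z$ imposes fewer than $\deg Z$ conditions on conics, and a length-$4$ zero-dimensional subscheme of $\bbP^2$ imposing at most three conditions on conics must lie on a line --- yielding a fixed component, a contradiction. So $\deg Z\le 3$; since conjugation-invariance with no real point forces $\deg Z$ to be even, $\deg Z\in\{0,2\}$, and in the length-$2$ case the only conjugation-invariant possibility with no real point is a reduced conjugate pair $\{p,\bar p\}$ (a length-$2$ scheme supported at one point would be supported at a real point). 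Hence $I_{\operatorname{sat}}$ is radical and cuts out $\varnothing$ or $\{p,\bar p\}$.

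With Claim~\ref{lemma:Veronese:claim2} available, Claims~\ref{lemma:Veronese:claim3} and~\ref{lemma:Veronese:claim4} follow from a coordinate normalization in the conjugate-pair case (the empty case being trivial, since then $\sAngle{\bdl}_2=R_2$). Choosing coordinates so that $p=[1\mathcolon i\mathcolon 0]$ and $\bar p=[1\mathcolon -i\mathcolon 0]$, the conics through $Z$ are spanned by $x_0^2+x_1^2$, $x_0x_2$, $x_1x_2$, $x_2^2$. Not all $l_i$ are divisible by $x_2$ (else $V(\bdl)\supseteq V(x_2)$), so after replacing $\bdl$ by a tuple spanning the same linear series --- which changes neither $\sAngle{\bdl}_2$ nor the dimensions of $\syzygy_1(\bdl)$ and $\image\operatorname{ev}_p$ --- we may take $l_1=x_0^2+x_1^2+\epsilon x_2^2$ and $l_2=x_2m_2$, $l_3=x_2m_3$ with $m_2,m_3$ linear and $l_1(r)\neq 0$ at the real point $r:=V(m_2,m_3)$ (otherwise $r$ would be a real common zero). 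For Claim~\ref{lemma:Veronese:claim3} one computes $\sAngle{\bdl}_2=l_1S_2+x_2\cdot(m_2,m_3)_3$ inside $R_2\cong S_4$; since $x_2\nmid(x_0^2+x_1^2)$ and $l_1$ is a unit at $r$, the intersection $l_1S_2\cap x_2(m_2,m_3)_3$ is exactly $\linspan\{l_1x_2m_2,\;l_1x_2m_3\}$, so $\dim\sAngle{\bdl}_2=6+9-2=13=\dim(I_{\operatorname{sat}})_2$ and equality follows.

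For Claim~\ref{lemma:Veronese:claim4}, reducedness of $Z$ (Claim~\ref{lemma:Veronese:claim2}) gives that $l_1,l_2,l_3$ generate the maximal ideal of $\calO_{X,p}$, hence their differentials span the two-dimensional cotangent space $T_p^*X$; reading the relation $\sum_i l_ih_i=0$ modulo the square of the maximal ideal at $p$ shows $\sum_i h_i(p)\,dl_i|_p=0$, so $\operatorname{ev}_p(h_1,h_2,h_3)$ lies in the one-dimensional kernel of the surjection $\bbC^3\to T_p^*X$, whence $\dim\image\operatorname{ev}_p\le1$. For the reverse inequality, exhibit a syzygy outside this kernel: for any linear form $n$ on $\bbP^2$ with $n(p)\neq0$, the tuple $\bdh:=(0,m_3n,-m_2n)$ lies in $\syzygy_1(\bdl)$ because $l_2(m_3n)-l_3(m_2n)=x_2m_2m_3n-x_2m_3m_2n=0$, and $\operatorname{ev}_p(\bdh)=(0,m_3(p)n(p),-m_2(p)n(p))\neq0$ since $(m_2(p),m_3(p))\neq(0,0)$ (as $p\neq r$). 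Thus $\image\operatorname{ev}_p$ is exactly one-dimensional. The main obstacle is Claim~\ref{lemma:Veronese:claim2}: this is the step where ``no common real zero'' does essential work, and it rests on the degree bound above, i.e., on the Bézout count for the base of a net with no fixed component together with the fact that a length-$4$ subscheme of $\bbP^2$ imposing only $\le3$ conditions on conics must be collinear. Once $Z$ is a reduced conjugate pair, the rest is the bookkeeping indicated above.
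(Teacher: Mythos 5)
Your proof is correct, and for claims (ii)--(iv) it takes a genuinely different route from the paper. For (i) the two arguments coincide (a positive-dimensional common zero locus forces a common real line). For (ii), the paper invokes the \emph{smallness} of varieties of minimal degree --- a linear section $\Lambda\cong\bbP^2$ meets $X$ in a scheme of length at most $3$ --- and then excludes lengths $1$ and $3$ by conjugation; you instead stay on $\bbP^2$, bound the base scheme of the net by B\'ezout and exclude length $4$ via the Cayley--Bacharach-type fact that a length-$4$ scheme imposing dependent conditions on conics is collinear, finishing with the parity argument. Your route is more elementary and self-contained for the Veronese surface; the paper's generalizes to other varieties of minimal degree. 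For (iii), the paper runs prime avoidance, Bertini, and a graded exact sequence to get $\dim(R/I)_2=2$; your explicit computation $\dim\sAngle{\bdl}_2=6+9-2=13=\dim (I_{\operatorname{sat}})_2$ after normalizing coordinates is a valid and arguably more transparent substitute. For (iv), your observation that $\operatorname{ev}_p$ necessarily lands in the one-dimensional kernel of the surjection $\bbC^3\to\frakm_p/\frakm_p^2$ (surjectivity coming from reducedness of $Z$ at the smooth point $p$), combined with the explicit Koszul syzygy $(0,m_3n,-m_2n)$ on which it is nonzero, is a cleaner argument than the paper's analysis of $\syzygy_1(\bdl)$ modulo the trivial syzygies.

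One step deserves more justification: the parenthetical assertion that ``the base scheme lies in a zero-dimensional intersection of two members of $N$.'' Two members of the net may well share a line (e.g.\ $x_0x_1$ and $x_0x_2$), so you must exhibit \emph{some} pair with finite intersection. This follows because a net of conics consisting entirely of singular members either has a fixed line (excluded, as you note) or is the net of all conics singular at a fixed point, which by realness of the net is a real common zero --- also excluded; hence some member is irreducible and pairs with any independent member to give a length-$4$ complete intersection. (Alternatively, if $q_1,q_2$ share a line $\ell$ with $\ell\nmid q_3$, a direct case analysis still gives $\deg Z\le 3$.) The normalization $l_1=x_0^2+x_1^2+\epsilon x_2^2$ also implicitly uses a further substitution $x_i\mapsto x_i+a_ix_2$ fixing $\{p,\bar p\}$ to kill the cross terms $x_0x_2,x_1x_2$, but nothing in your subsequent computation actually requires those terms to vanish, so this is cosmetic.
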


\begin{proof}
\ref{lemma:Veronese:claim1} Via the pullback map \(\nu_2^\sharp\), the linear forms \(l_1,l_2,l_3\) correspond to linearly independent quadratic forms $a_1,a_2,a_3\in\bbR[x_0,x_1,x_2]_2$ having no common real zeroes on $\bbP^2$. If $a_1,a_2,a_3$ had infinitely many common zeroes (over $\bbC$), then the zeros would form a curve $C\subset\bbP^2$ and the form $b\in\bbC[x_0,x_1,x_2]$ defining $C$ would be a common factor of all the $a_1,a_2,a_3$. Since the $a_1,a_2,a_3$ are real, they can also be divided by the conjugate $\bar{b}$. If $b\neq\bar{b}$, then every of $a_1,a_2,a_3$ would be a constant multiple of the product $b\cdot\bar{b}$ contradicting the fact that the $a_i$ are linearly independent. We conclude that $b=\bar{b}$ so $C$ is a real line in $\bbP^2$, contradicting that $a_1,a_2,a_3$ have no common real zeros.

\ref{lemma:Veronese:claim2} Since the Veronese surface is a variety of minimal degree it is a \emph{small scheme} which means that for every projective subspace $\Lambda\subseteq \bbP^5$ such that $X\cap \Lambda$ is zero-dimensional we know that the length of the scheme $\Lambda\cap X$ is at most ${\rm dim}(\Lambda)+1$~\cite{eisenbud2006small}. In our case the projective subspace $\Lambda$ defined by the linear forms $l_1,l_2,l_3$ is isomorphic to $\bbP^{5-3}=\bbP^2$ and therefore the degree of the scheme $X\cap \Lambda$ is at most $3$. We claim that the degree of $X\cap \Lambda$ is either $0$ or $2$ and the scheme $X\cap \Lambda$ is reduced.
It is immediate that it cannot have degree one since otherwise it would be a real point. Next we show that the degree cannot be $3$. If $X\cap \Lambda$ had degree three then we will show that it must contain at least one real point which is impossible since the forms have no common real zero. More precisely, we can have the following cases.
\begin{itemize}
\item $X\cap\Lambda$ is supported at three distinct points, forcing it to be reduced. The conjugation action implies at least one of these points is real. Or
\item $X\cap \Lambda$ is supported at exactly two distinct points of multiplicities $1$ and $2$. The points cannot be exchanged by conjugation since they have different multiplicities so one of them is real. Or
\item $X\cap \Lambda$ is supported at exactly one point which must therefore be real.
\end{itemize}
We thus conclude that $X\cap \Lambda$ either has degree zero, or it is supported at a conjugate pair of points $\{p,\bar{p}\}$ and it has degree $2$, which implies that it is reduced.
This is equivalent to saying that the unique saturated ideal $I_{\operatorname{sat}}$ defining the points is radical.

\ref{lemma:Veronese:claim3} The claim is equivalent to showing that the dimension of $(R/I)_2$ is two. 
By part~\ref{lemma:Veronese:claim1} we know that the ideal $I$ has height two in $R$ and thus by prime avoidance contains a regular sequence of linear forms $(g_1,g_2)$. 
By Bertini's theorem~\cite{jouanolou1983theoremes}, $\sAngle{g_1,g_2}$ defines a reduced subscheme of four points of $X$ denoted as $Y:=\{r_1,r_2,p,\bar{p}\}$. By smallness of $X$ these points are furthermore projectively independent. 

Take any $g_3\in R_1$ so that $(g_1,g_2,g_3)=I$ and consider the graded exact sequence
\[0\rightarrow K\rightarrow R/(g_1,g_2)[-1]\xrightarrow{\cdot g_3} R/(g_1,g_2)\rightarrow R/I\rightarrow0\]
where the middle map is multiplication by $g_3$. By construction the degree $d=2$ part of $K$ consists of the functions $f$ on $Y$ of degree $d-1$ which vanish when multiplied by $g_3$. Since $g_3$ does not vanish at either $r_1$ or $r_2$ and vanishes at $p,\bar{p}$, this is equivalent to $f$ satisfying $f(r_1)=f(r_2)=0$. Since $Y$ consists of linearly independent points the space of such functions is $2$-dimensional, having codimension $2$ in $R/(g_1,g_2)_1$. We conclude that the image of the multiplication by $g_3$ is two-dimensional so $\dim_\bbR{(R/I)_2}=\dim_\bbR{(R/(g_1,g_2))_2}-2=4-2=2$ as claimed.

To prove~\ref{lemma:Veronese:claim4}, note that the statement is about vector space dimensions and that it does not depend on the chosen generating set for the ideal $I$. We can therefore assume that $l_1=g_1$ and $l_2=g_2$ form a regular sequence. We let $R':=R/(l_1,l_2)$ and claim that the  following statements hold:
\begin{enumerate}[label=(\Alph*)]
\item If $\pi:R_1^3\rightarrow R_1$ denotes the projection onto the last component and $q:R_1\rightarrow R'_1$ is the quotient map then the composition $q\circ \pi$ defines an injective map from $\syzygy_1(\bdl)/T$ to $R'_1$, where $T\subset\syzygy_1(\bdl)$ is the subspace spanned by the trivial Koszul syzygies \((-l_3,0,l_1)\), \((0,-l_3,l_2)\), and \((l_2,-l_1,0)\).
\label{lemma:Veronese:claim4A}
\item The image of $q\circ \pi$ is the set of linear forms vanishing at the points $r_1,r_2$ so it has codimension two in $R'_1$.
\label{lemma:Veronese:claim4B}
\item Let $W\subset\syzygy_1(\bdl)$ denote the subspace of syzygies whose components all vanish at $p$. The image of $W$ under $q\circ \pi$  are precisely the functions in $R'_1$ vanishing at $r_1,r_2,p$ so $W/T$ has codimension three in $R'_1$.
\label{lemma:Veronese:claim4C}
\end{enumerate}
Verifying these claims completes the proof since $\operatorname{ev}_p$ annihilates $T$ and thus descends to a map $\operatorname{ev}_p:\syzygy_1(\bdl)/T\rightarrow \bbC^3$, which by part~\ref{lemma:Veronese:claim4B} has an image that is at most two dimensional. Moreover $\operatorname{ev}_p$ annihilates the subspace $W/T$ which by part~\ref{lemma:Veronese:claim4C} has codimension three. We conclude that the image of the evaluation map is one-dimensional proving~\ref{lemma:Veronese:claim4}.
All that remains is to verify the claims~\ref{lemma:Veronese:claim4A}-\ref{lemma:Veronese:claim4C}. 

\ref{lemma:Veronese:claim4A} The composition $q\circ \pi$ defines a linear map from $\syzygy_1(\bdl)$ to $R'_1$ which maps every trivial syzygy to zero. So it descends to a linear map from $\syzygy_1(\bdl)/T$ to $R'_1$. If a tuple $(h_1,h_2, h_3)$ maps to zero in $R'_1$, then $h_3=\sum_{i=1}^{2}c_i\bdl_i$ for some $c_1,c_2\in\bbR$ so we can make the last component of $(h_1,h_2,h_3)$ zero by subtracting trivial syzygies. If the last component of a syzygy is zero then the first $2$ components are a syzygy for $(l_1,l_2)$ so these components are in the span of the Koszul syzygies on $l_1,l_2$ by our assumption that they form a regular sequence. This proves that the composition is injective as claimed.
\ref{lemma:Veronese:claim4B} Take any syzygy $(h_1,h_2,h_3)$ and we know that $h_3l_3=0$ in $R'_1$. Since $R'$ is the coordinate ring of a reduced set of four independent points, vanishing at the points $r_1,r_2$ imposes independent conditions on the linear form $h_3$. We conclude that the image of the space $\syzygy_1(\bdl)/T$ is equal to the space of such forms and thus has codimension two in $R'_1$.
\ref{lemma:Veronese:claim4C} Assume $h_3(p)=0$ and $(h_1,h_2,h_3)$ is a linear syzygy so the equation $h_1l_1+h_2l_2+h_3l_3=0$ holds identically in $R$. Pulling back to $\bbP^2$ via the Veronese we can assume that this is an equality of quadratic forms in $\bbP^2$. Since $l_1,l_2$ define the point locally around $p$, their gradients at $p$ are linearly independent. This implies that both $h_1$ and $h_2$ must vanish at $p$. This observation implies that $W$ is mapped via $q\circ \pi$ onto the functions in $R'_1$ vanishing at $r_1,r_2$ and $p$, proving the final claim because the points of $Y$ are linearly independent. 
\end{proof}

We are now ready to prove Theorem~\ref{thm:VeroneseSurfaceMinima}.

\begin{proof}[Proof of Theorem~\ref{thm:VeroneseSurfaceMinima}]
    For the first assertion, by Lemma~\ref{lemma:InteriorSpuriousLocalMinimaMoreSquares}, we only need to consider \(k=3\).
    Assume for contradiction that \(\bdl=(l_1,l_2,l_3)\in R_1^k\) is a spurious second-order stationary point and \(l_1,l_2,l_3\) do not share any common real zero on \(X\).
    Then \(l_1,l_2,l_3\) must have a common zero on \(X\), or otherwise they would generate \(R_2\) and fail the first-order optimality condition.
    Part~\ref{lemma:Veronese:claim2} of Lemma~\ref{lemma:VeroneseZerosStructure} tells us that the common zero consists of a conjugate pair \(\{p,\bar{p}\}\) of complex points on \(X\), and
    part~\ref{lemma:Veronese:claim3} says that any quadratic form on \(X\) that vanish on the common zeros \(V(l_1,l_2,l_3)\) must be generated by the linear forms \(l_1,l_2,l_3\).
    This means that any \(g\perp\sAngle{\bdl}_2\), the linear functional \(\pAngle{g}{\cdot}\) can be identified with the evaluation at (some representation of) \(p\) or \(\bar{p}\), i.e., 
    \[
        \pAngle{g}{f}=b_1\operatorname{re}(f(p))+b_2\operatorname{im}(f(p)),\quad f\in R_2,
    \]
    for some \(b_1,b_2\in\bbR\) that are not both 0.

    Now use the evaluation map \(\operatorname{ev}_p\) in part~\ref{lemma:Veronese:claim4} of Lemma~\ref{lemma:VeroneseZerosStructure}, which has a one-dimensional image in \(\bbC^3\).
    This means that we can find \(c_1,c_2\in\bbC\) such that \(h_i(p)=c_ih_3(p)\) for \(i=1,2\) for all syzygies \(\bdh\in\syzygy_1(\bdl)\).
    Thus for any \(g\perp\sAngle{\bdl}_2\),
    \[
        \pAngle{g}{\sigma_3(\bdh)}=b_1\cdot\mathrm{re}(C\cdot h_3^2(p))+b_2\cdot\mathrm{im}(C\cdot h_3^2(p)),\quad C=1+c_1^2+c_2^2\in\bbC.
    \]
    If \(C=0\), then \(\pAngle{g}{\sigma_3(\bdh)}=0\) but \(h_3(p)\neq0\) so \(g\not\perp\sAngle{\bdh}_2\), which implies that \(\bdl\) is not a spurious second-order stationary point by Theorem~\ref{thm:CharacterizationSpuriousMinima}.
    Otherwise we may assume \(C\neq0\), in which case we may rewrite
    \[
        \pAngle{g}{\sigma_3(\bdh)}=b'_1\operatorname{re}(h_3^2(p))+b'_2\operatorname{im}(h_3^2(p))
    \]
    for some real numbers 
    \[
        \begin{bmatrix}b'_1 \\ b'_2 \end{bmatrix}=
        \begin{bmatrix}\operatorname{re}(C) & -\operatorname{im}(C) \\
            \operatorname{im}(C) & \operatorname{re}(C)
        \end{bmatrix}\cdot\begin{bmatrix} b_1 \\ b_2 \end{bmatrix}\in\bbR^2\setminus\{(0,0)\}.
    \]
    This implies that \(\bdh\mapsto \pAngle{g}{\sum_{i=1}^{3}h_i^2}\) is indefinite on \(\syzygy_1(\bdl)\), so that \(\bdl\) is not a spurious second-order stationary point again by Theorem~\ref{thm:CharacterizationSpuriousMinima}.

    Now for the second assertion, we only need to consider boundary points \(\bdl\) by the first assertion.
    By Lemma~\ref{lemma:FiniteGramNotSpuriousMinimum} it is associated with infinitely many Gram matrices.
    In this case, Proposition~\ref{prop:VeroneseSurfaceMinimaBinary} shows that \(\sigma_3(\bdl)\) can be identified with a binary quartic form on \(\bbP^2\) under the pullback map \(\nu_2^\sharp\). 
    This implies that each component \(l_1\), \(l_2\), and \(l_3\) can also be identified with quadratic forms on \(\bbP^2\) in the same two variables, say \(x_0\) and \(x_1\).
    Corollary~\ref{cor:LinearDependenceNotLocalMinimum} ensures that \(l_1,l_2\), and \(l_3\) are linearly independent, which implies that their span consists of those corresponding to quadratic forms in \(x_0\) and \(x_1\) on \(\bbP^2\), and the syzygies \(\syzygy_1(\bdl)\) do not involve monomials divisible by \(x_2^2\).
    Thus the argument in Example~\ref{ex:TernaryQuarticsBoundaryPoint} shows that the only possible candidate for \(\bar{f}\) to make \(\bdl\) a spurious second-order stationary point is \(\bar{f}=\sigma_3(\bdl)-\epsilon(\nu_2^\sharp)^{-1}(x_2^4)\in R_2\) for some \(\epsilon>0\), which does not make \(\bdl\) a spurious local minimum.
\end{proof}

\section{Spurious local minima on general varieties}

In this section, we expand our horizon to varieties of higher degree.
We first show the existence of spurious local minima in the interior on \(m\)-dimensional Veronese varieties \(\nu_2(\bbP^m)\).
Then we show that all such spurious local minima are uncommon by bounding its dimension, and propose a conceptual algorithmic framework to avoid spurious local minima in the interior.

\subsection{Spurious local minima in the interior}

We begin our discussion on varieties of higher degree with the following example.
\begin{example}\label{ex:QuarticsSpuriousMinima}
    Let \(X=\nu_2(\bbP^m)\) where points in \(\bbP^m\) are denoted by \([x_0\mathcolon\cdots\mathcolon x_m]\). 
    Pick \(\bdl=(l_1,\dots,l_k)\in R_1^k\) such that \(\nu_2^\sharp(l_1)=x_0^2+x_1^2\), \(\nu_2^\sharp(l_2)=x_2^2\), \(\nu_2^\sharp(l_3)=x_2x_3\), \(\dots\), \(\nu_2^\sharp(l_k)=x_m^2\) where \(k=1+\binom{m}{2}\).
    In plain words, \(l_1,\dots,l_k\) correspond to the quadratic form \(x_0^2+x_1^2\) and all quadratic monomials in the variables \(x_2,\dots,x_m\).
    Note that their only common zero is the complex pair \(p:=(1,\pm\sqrt{-1},0,\dots,0)\in\bbP^m\), so the sum of squares \(\sum_{i=1}^{k}l_i^2\) is indeed positive on \(X(\bbR)\).
    When \(m\ge10\), it is easy to check that \(\binom{k}{2}\ge\dim_\bbR(R_2)=\binom{m+4}{4}\) so \(k\ge\Pythagoras(X)\) by the first bound in~\cite{blekherman2021sums}. 
    We claim that 
    \begin{enumerate}
        \item \(\sum_{i=1}^{k}l_i^2\in\interior{\Sigma_X}\), and 
        \item there exists \(\bar{f}\in R_2\) such that \(\bdl\) is a spurious local minimum.
    \end{enumerate}
    We first show the assertion (ii).
    For any syzygy \(\bdh=(h_1,\dots,h_k)\in\syzygy_1(\bdl)\), we will show \(h_i(p)=0\) for \(i=1,\dots,k\) as the point evaluation at (any representation of) \(p\) defines a desired \(g\in R_2\) because \(\sAngle{\bdl}_2\cap\Sigma_X^*=0\) in this case.
    By some slight abuse of notation, we refer to \(h_1,\dots,h_k\) as quadratic forms on \(\bbP^m\) with variables \(x_0,x_1,\dots,x_m\). 
    It is easy to see that \(h_1\) does not involve \(x_0\) or \(x_1\), as other polynomials \(l_ih_i\) have degrees in \(x_0\) or \(x_1\) at most 2, \(i=2,\dots,k\).
    For the rest, if \(h_i\) involves \(x_0\) or \(x_1\), then the product \(l_ih_i\) can only be cancelled by multiplies of \(x_0^2+x_1^2\), which means that \(h_i\) is divisible by \(x_0^2+x_1^2\).
    Therefore, \(\syzygy_1(\bdl)\subseteq\sAngle{\bdl}_1^k\), which implies \(h_i\) vanishes at \(p\) for all \(i=1,\dots,k\).
    By Theorem~\ref{thm:ExistenceSpuriousMinima}, we know that \(\bdl\) is a spurious second-order stationary point.
    Moreover, \(\bdh\in\syzygy_1(\bdl)\) implies that \(h_i\in\sAngle{\bdl}_1\) for each \(i=1,\dots,k\), so by Lemma~\ref{lemma:LocalMinimumSufficientCondition}, we know that \(\bdl\) is in fact a spurious local minimum.

    It remains to show the assertion (i), which can be done by a standard perturbation argument as follows.
    We claim that for any single square of a monic quadratic monomial \(q\) in \(x_0,x_1,\dots,x_n\) and \(\epsilon\le1\), \(\sigma_k(\bdl)-\epsilon\cdot q^2\in\Sigma_X\).
    \begin{itemize}
        \item If \(q\) only involves \(x_2,\dots,x_n\), then it is obvious as \(q\) is one of \(l_2,\dots,l_k\).
        \item If \(q\) only involves \(x_0\) and \(x_1\), then any \(\epsilon\le1\) also works because \(l_1^2-\epsilon q^2=(x_0^2+x_1^2)^2-q^2=x_0^4+2x_0^2x_1^2+x_1^4-q^2\) is again a sum of squares for each possibility \(q=x_0^2\), \(q=x_1^2\), and \(q=x_0x_1\).
        \item If \(q=x_ix_j\) where \(i=0,1\) and \(j=2,\dots,k\), note that \((x_ix_j)^2=(x_i^2)(x_j^2)\), so a Gram matrix of \(\sigma_k(\bdl)-\epsilon\cdot q^2\) can be written as (for example \(i=0\) and \(j=k\))
        \begin{equation*}
            \begin{bmatrix}
                1 & 0 & 0 &  & \cdots &  & 0 & 0  & \cdots & -\epsilon/2\\
                0 & 2 & 0 &  & \cdots &  & 0 & 0  & \cdots  & 0\\
                0 & 0 & 1 &  & \cdots &  & 0 & 0  & \cdots  & 0\\
                &   &   & 0 &   &   &  &  &  &  \\
                \vdots & \vdots & \vdots &   & \ddots &  & \vdots & \vdots & & \vdots \\
                &   &   &   &   & 0 &  &  &  &  \\
                0 & 0 & 0 &  & \cdots &  & 1 & 0 & \cdots & 0\\
                0 & 0 & 0 &  & \cdots &  & 0 & 1 & \cdots & 0\\
                \vdots & \vdots & \vdots &  & \cdots &  & \vdots & \vdots & \ddots & 0\\
                -\epsilon/2 & 0 & 0 &  & \cdots &  & 0 & 0 & \cdots & 1
            \end{bmatrix},
        \end{equation*}
        where the first three columns/rows correspond to monomials \(x_0^2\), \(x_0x_1\), and \(x_1^2\), and the bottom right block corresponds to quadratic monomials in \(x_2,\dots,x_n\).
        This Gram matrix is positive semidefinite for \(\epsilon\le2\) because of the diagonal dominance.
        Therefore, \(\sigma_k(\bdl)-\epsilon\cdot q^2\) is still a sum of squares.
    \end{itemize}
    Now notice that for any monic monomial \(ab\), \(a,b\in\bbR[x_0,\dots,x_n]_2\), 
    \[
        \sigma_k(\bdl)\pm \frac{2}{3}ab=\frac{1}{3}\left((\sigma_k(\bdl)+(a\pm b)^2)+(\sigma_k(\bdl)-a^2)+(\sigma_k(\bdl)-b^2)\right)
    \]
    is again a sum of squares.
    Then by the convexity of \(\Sigma_X\) and that monic monomials span \(\bbR[x_0,\dots,x_n]_4\), we conclude that \(\sigma_k(\bdl)\in\interior{\Sigma_X}\).
\end{example}

Nevertheless, the locus of spurious local minima in the interior is small as quantified in Theorem~\ref{thm:SmallLocusSpuriousMinima}.
To be more precise, given a smooth, totally real variety \(X\subseteq\bbP^n\), let \(\Delta\subseteq R_2\) denote the Zariski closure of all quadratic forms that are singular at some point of \(X\).
It is known that \(\codim(\Delta\cap\interior{\Sigma_X})\ge2\) because it is contained in the singular locus of the discriminant of the second Veronese re-embedding of \(X\)~\cite[Theorem 2.2]{blekherman2021sums}.

\begin{proof}[Proof of Theorem~\ref{thm:SmallLocusSpuriousMinima}]
    Let \(C\subset R_1^k\) denote the set of all linear forms having a common complex zero on \(X\).
    As \(k\ge r(X)\), \(\bdl\in R_1^k\) is a spurious local minimum only if \(\bdl\in C\).
    Note that \(\sigma_k(\bdl)\in\Delta\cap\interior{\Sigma_X}\) if \(\bdl\in C\).
    Thus the codimension of the Zariski closure of \(\sigma_k(C)\) is at least \(\codim(\Delta\cap\interior{\Sigma_X})\ge2\).
\end{proof}

Arguments based on low dimensionality of spurious local minima have appeared in many smoothed analyses of Burer-Monteiro methods~\cite{boumal2016nonconvex,pumir2018smoothed,boumal2020deterministic,cifuentes2021burer,cifuentes2022polynomial}.
In our case, $X$ is typically not a complete intersection, which fails the regularity assumption in these works.

\subsection{A restricted path algorithm}

Given the existence of spurious local minima in the interior, as shown in Example~\ref{ex:QuarticsSpuriousMinima}, one may want to avoid them generically using the fact that \(\codim{(\Delta\cap\interior{\Sigma_X})}\ge2\) where \(\Delta\) is the Zariski closure of all quadratic forms that are singular at some smooth point of \(X\). 
This can be achieved if we algorithmically restrict our search path in \(R_2\) to be close to the line connecting our starting point and the target.
In fact, the algorithm can be stated for the more general formulation~\eqref{eq:SOSRelaxationProblem}, which specializes to~\eqref{eq:SOSMinimizationProblem} by
picking any tuple of linear forms \(\bdl^0\) and setting \(f:=\sigma_k(\bdl^0)\), \(g:=f-\bar{f}\), \(\ubar{v}=0\), and \(\bar{v}=1\).
In this case, whenever we solve Problem~\eqref{eq:SOSRelaxationProblem} with \(v^*=1\), the corresponding solution \(\bdl^*\in R_1^k\) is a certificate for \(\bar{f}\) being a sum of \(k\) squares.

The benefit of the alternative formulation~\eqref{eq:SOSRelaxationProblem} is that it naturally leads to a one-dimensional search algorithm in \(v\).
With some initial value \(\ubar{v}\), such that we already have a sum-of-squares representation for \(f-\ubar{v}\cdot g\), we can search for the maximum \(v^*\) by iteratively increasing its value by a small step \(u>0\), i.e., \(v_j:=v_{j-1}+u\) for \(j=1,2,\dots\) with \(v_0:=\ubar{v}\), and solving~\eqref{eq:SOSMinimizationProblem} with a temporary target being \(\bar{f}=\bar{f}_j=f-v_j\cdot g\) until a positive distance \(\nVert{\sigma_k(\bdl^j)-\bar{f}_j}>0\) is returned.
We summarize this procedure in Algorithm~\ref{alg:AvoidSpuriousStationaryPoints}, where any descent method in step~\ref{alg:AvoidSpuriousStationaryPoints:Subproblem} is assumed to give a monotone decreasing objective distance in~\eqref{eq:SOSMinimizationProblem}.
The correctness of the algorithm is then checked in Proposition~\ref{prop:RestrictedPath}, where it is worth noting that the argument remains valid even if there are spurious stationary points on the boundary, since the algorithm may restrict the search to the interior of \(\Sigma_X\).

\begin{algorithm}[ht]
    \caption{Restricted Path for Problem~\eqref{eq:SOSRelaxationProblem}}
    \label{alg:AvoidSpuriousStationaryPoints}
    \begin{algorithmic}[1]
    \Require{quadratic forms \(f,g\), a step size \(u>0\), a number \(k\ge r(X)\)}
    \Require{an initial value \(v_0=\ubar{v}\in\bbR\), and linear forms \(\bdl^0\in R_1^k\) such that \(f-v_0g=\sigma_k(\bdl^0)\in\interior{\Sigma_X}\)} 
    \Ensure{a near-optimal value \(v'>v^*-u\) and linear forms \(\bdl'\) such that \(\sigma_k(\bdl')=f-v'g\in\interior{\Sigma_X}\)}
        \State{set \(j\leftarrow 0\)}
        \Repeat
        \State{store \(v':=v^j\), \(\bdl':=\bdl^j\), and update \(j\leftarrow j+1\) and \(v^j:=v^{j-1}+u\)}
        \State{solve~\eqref{eq:SOSMinimizationProblem} with \(\bar{f}=\bar{f}_j:=f-v_jg\), starting with \(\bdl^{j-1}\in R_1^k\) and using a descent method}\label{alg:AvoidSpuriousStationaryPoints:Subproblem}
        \State{collect the solution \(\bdl^j\in R_1^k\)}
        \Until{\(v'>\bar{v}-u\) or the distance \(\nVert{\sigma_k(\bdl^j)-\bar{f}_j}>0\)}
    \end{algorithmic}
\end{algorithm}

\begin{proposition}\label{prop:RestrictedPath}
    Fix \(g\in R_2\). 
    For a generic \(f\in R_2\) such that \(f-v_0g\in\interior{\Sigma_X}\)
    and any suboptimality tolerance \(\theta>0\), 
    there exists \(U>0\) such that for any step size \(u<U\), 
    Algorithm~\ref{alg:AvoidSpuriousStationaryPoints} will return a near-optimal value \(v'>v^*-\theta\) and the associated linear forms \(\bdl'\) such that \(f-v'g=\sigma_k(\bdl')\) within finitely many iterations.
\end{proposition}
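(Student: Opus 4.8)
The plan is to read Algorithm~\ref{alg:AvoidSpuriousStationaryPoints} as a march along the line segment of temporary targets $\bar f_j=f-v_jg$ (with $v_j=v_0+ju$), to show that for generic $f$ this segment stays inside $\interior\Sigma_X$ and off the ``bad locus'' $\Delta$, and to deduce that no subproblem can stall away from a global minimizer, except possibly in an $O(u)$-window around $v^*$. So first I would fix the genericity: with $\Delta\subseteq R_2$ the Zariski closure of the quadratic forms singular at some point of $X$, we have $\codim(\Delta\cap\interior\Sigma_X)\ge2$ by \cite[Theorem~2.2]{blekherman2021sums}. Since $\Sigma_X$ is convex and $f-v_0g\in\interior\Sigma_X$, the feasible interval $\{v:f-vg\in\Sigma_X\}=[v_{\min},v_{\max}]$ satisfies $v^*\le v_{\max}$ and $f-vg\in\interior\Sigma_X$ for $v_0\le v<v^*$; projecting $R_2\to R_2/\bbR g$, the image of $\Delta\cap\interior\Sigma_X$ has codimension $\ge1$, so for $f$ off a lower-dimensional set the whole line $\{f-vg\}$ misses $\Delta\cap\interior\Sigma_X$, and generically even the exit point $f-v^*g$ lies off $\Delta$, so the closed segment $\{f-vg:v_0\le v\le v^*\}$ has positive distance to the closed set $\Delta\cap\Sigma_X$.

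Next I would show that any subproblem whose sublevel set lies over $\interior\Sigma_X\setminus\Delta$ succeeds exactly. The map $\sigma_k$ is proper (for $X$ nondegenerate and totally real the PSD Gram matrices of a fixed form are bounded), so the sublevel sets of~\eqref{eq:SOSMinimizationProblem} are compact; started, as in the algorithm, from $\bdl^{j-1}$ with $\sigma_k(\bdl^{j-1})=\bar f_{j-1}$, the $j$-th subproblem keeps every iterate in $L_j:=\{\bdl:\nVert{\sigma_k(\bdl)-\bar f_j}\le u\nVert{g}\}$, and $\sigma_k(L_j)\subseteq\overline{B}(\bar f_j,u\nVert{g})\cap\Sigma_X$. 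Now if $\sigma_k(\bdl)\in\interior\Sigma_X\setminus\Delta$ then $l_1,\dots,l_k$ have no common complex zero on $X$ (a common zero would be a singular point of $\sigma_k(\bdl)$ on $X$); as $k\ge r(X)$, linearly independent $l_i$ span $R_2$ and make $\diff_\bdl\sigma_k$ surjective, so $\bdl$ is not even a first-order stationary point, while linearly dependent $l_i$ cannot give a spurious second-order stationary point by Corollary~\ref{cor:LinearDependenceNotLocalMinimum}. Hence every second-order stationary point of~\eqref{eq:SOSMinimizationProblem} lying in $\sigma_k^{-1}(\interior\Sigma_X\setminus\Delta)$ is a global minimizer, with objective $0$ since $k\ge\Pythagoras(X)$ and $\bar f_j\in\interior\Sigma_X$. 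So, fixing $\delta>0$ and taking $U$ small enough that $U\nVert{g}<\mathrm{dist}(\{f-vg:v_0\le v\le v^*-\delta\},\partial\Sigma_X\cup(\Delta\cap\Sigma_X))$, every subproblem with $v_j\le v^*-\delta$ and $u<U$ has $\sigma_k(L_j)\subset\interior\Sigma_X\setminus\Delta$, so a descent method converging to a second-order stationary point returns $\bdl^j$ with $\sigma_k(\bdl^j)=\bar f_j$, maintaining the invariant (which starts from the given $\bdl^0$).

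Putting these together: since the loop halts only when a subproblem returns a positive distance, it runs past $v_j=v^*-\delta$; and once $v_j\ge v_{\max}$ (respectively $v_j\ge\bar v$) the target leaves $\Sigma_X$ and the distance is positive, so the loop terminates after finitely many iterations, with output $v'$ equal to the last successful $v_j$, whence $v'>v^*-\delta-u$. Letting $\delta\downarrow0$ shows $v'$ is within $u+o(1)$ of $v^*$.

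The hard part is upgrading this to the exact bound $v'>v^*-u$, which forces the argument to engage with $\partial\Sigma_X$: when $v^*=v_{\max}$ the ball $\overline{B}(\bar f_j,u\nVert{g})$ does meet $\partial\Sigma_X$ once $v_j$ is within $O(u)$ of $v^*$, and spurious second-order stationary points live on $\partial\Sigma_X$ (cf.\ Example~\ref{ex:TernaryQuarticsBoundaryPoint}), so a subproblem near $v^*$ could stall at a small positive distance. I expect to control this by showing that for generic $f$ the line meets the (generically smooth) boundary of $\Sigma_X$ transversally at $f-v^*g$, so that $\mathrm{dist}(f-vg,\partial\Sigma_X)=\nVert{g}\,(v^*-v)+O((v^*-v)^2)$, which pins the last successful step to $v'>v^*-u$ and furnishes the uniform threshold $U$. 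This boundary-regularity/transversality input is the main obstacle; everything else is bookkeeping with the compact sub-segments.
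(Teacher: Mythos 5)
Your argument is essentially the paper's: for generic $f$ the line $\Lambda=\{f-vg\}$ misses $\Delta\cap\interior{\Sigma_X}$ because that locus has codimension at least two, one takes $U$ to be the distance from $\Lambda$ to this locus, and the monotone descent started from $\bdl^{j-1}$ with $\sigma_k(\bdl^{j-1})=\bar{f}_{j-1}$ confines the search to a tube of radius $u$ around $\Lambda$, where by Theorem~\ref{thm:SmallLocusSpuriousMinima} (with $k\ge r(X)$) there are no spurious first-order stationary points lying over the interior. Your intermediate elaborations (compact sublevel sets, the $r(X)$/linear-dependence dichotomy via Corollary~\ref{cor:LinearDependenceNotLocalMinimum}) are consistent with, and more explicit than, what the paper writes. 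The ``hard part'' you isolate --- subproblems whose targets come within $O(u)$ of $\partial\Sigma_X$, where boundary spurious stationary points are not excluded by Theorem~\ref{thm:SmallLocusSpuriousMinima} --- is a genuine subtlety, but the paper's own proof does not engage with it either: it simply asserts that step~\ref{alg:AvoidSpuriousStationaryPoints:Subproblem} meets no spurious first-order stationary points whenever $f-v_jg\in\interior{\Sigma_X}$, and implicitly treats the first returned positive distance as certifying $v_j>v^*$. So you need not supply the transversality analysis to match the paper; your version with the $\delta$-slack is, if anything, the more candid rendering of what is actually established.
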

\begin{proof}
    Without loss of generality, we may assume \(\nVert{g}=1\).
    Note that in the \(j\)-th iteration, the distance \(\nVert{f-v_jg-\sigma_{k}(\bdl^{j-1})}=u\nVert{g}=u\) implies that the descent algorithm will only search for quadratic forms within distance \(u\) of the line segment \(\Lambda:=\{f+vg:v_0\le v\le v^*-\theta\}\subset f+\bbR g\).
    Thus, it suffices to choose any radius \(u>0\) such that for any \(v\in[v_0,v^*-\theta]\), the ball centered at \(v\) with radius \(u\) does not intersect with (1) the locus of stationary points in the interior, or with (2) the boundary points.

    For (1), since \((f+\bbR g)\cap\clos(\Delta\cap\interior{\Sigma_X})=\varnothing\) for generic \(f\in R_2\) by Theorem~\ref{thm:SmallLocusSpuriousMinima}, we know any radius \(u\) smaller than \(U_1:=\inf\{\nVert{h-h'}:h\in\Lambda,\ h'\in\clos(\Delta\cap\interior{\Sigma_X})\}>0\) satisfies the requirement.
    For (2), the construction can be slightly more explicit: by assumption \(f-v_0g\in\interior{\Sigma_X}\), so there exists \(\epsilon>0\) such that the ball centered at \(f-v_0g\) with radius \(\epsilon\) is contained in \(\interior\Sigma_X\).
    Since \(\Sigma_X\) is convex, any ball with radius \(U_2:=\frac{\theta}{v^*-v_0}\epsilon\) centered at \(v\in[v_0,v^*-\theta]\) is also contained in \(\interior\Sigma_X\).
    Therefore, setting \(U:=\min\{U_1,U_2\}\), we see that step~\ref{alg:AvoidSpuriousStationaryPoints:Subproblem} does not encounter any spurious (first-order) stationary points for any step size \(u<U\), and the algorithm terminates within \(\lceil\frac{v^*-\theta-v_0}{u}\rceil<\infty\) iterations.
\end{proof}

We remark that the formulation~\eqref{eq:SOSRelaxationProblem} extends to other interesting applications including sum-of-squares relaxation for polynomial optimization.
For example, suppose we want to find a lower bound of an inhomogeneous polynomial function \(F\in\bbR[x_1,\dots,x_n]\) by writing \(F-v\) as a sum of squares of polynomials in \(\bbR[x_1,\dots,x_n]\) for some \(v\in\bbR\).
It is of great interest to know the largest value \(v^*\) for \(F-v^*\) to be a sum of squares, as \(v^*\) provides a potentially good approximation of the minimization value \(\min_{x\in\bbR^n}F(x)\).
If \(\deg(F)= 2d\), we may approach this task by homogenizing \(F-v\) as \(x_0^{2d}\cdot F(x_1/x_0,\dots,x_n/x_0)-vx_0^{2d}\), which can be identified with a quadratic form \(f-vg\) on the Veronese variety \(X:=\nu_d(\bbP^n)\).
Now Algorithm~\ref{alg:AvoidSpuriousStationaryPoints:Subproblem} and Proposition~\ref{prop:RestrictedPath} still apply to this problem by setting \(\bar{v}=+\infty\) in~\eqref{eq:SOSRelaxationProblem}, assuming that we have the knowledge of some \(\ubar{v}\in\bbR\) and \(\bdl^0\in R_1^k\) such that \(f-\ubar{v}g=\sigma_k(\bdl^0)\) as the starting point.

\section{Numerical Experiments}

In this section, we report numerical experiments that illustrate our observations, and those that lead to further practical applications of~\eqref{eq:SOSMinimizationProblem}.
We implement\footnote{Code Access: \url{https://github.com/shixuan-zhang/LowRankSOS.jl}} Algorithm~\ref{alg:AvoidSpuriousStationaryPoints} in \texttt{Julia v1.6} and use the package \texttt{NLopt v1.0}~\cite{johnson2007NLopt} for its limited-memory BFGS (LBFGS) algorithm~\cite{liu1989limited} to solve Step~\ref{alg:AvoidSpuriousStationaryPoints:Subproblem}.
It is worth noting that in all of our experiments, it suffices to use directly the maximum step size $u=1$ without encountering spurious stationary points.
The reported computational times are based on a 3.7 GHz CPU with 32 GB RAM.

We first run the experiments where \(X\subset\bbP^n\) is an \(m\)-dimensional rational normal scroll for some \(m\ge2\).
Let \((n_1,\dots,n_m)\) be the heights of the Lawrence prism, as defined in~\eqref{eq:LawrencePrism}, and \(n+1=m+\sum_{i=1}^{m}n_i\).
As \(X\) has minimal degree, we know that \(\Pythagoras(X)=m+1\) and consider \(k=m+1,m+2,\) and \(m+3\) for comparison.
To produce a target \(\bar{f}\), we randomly generate a tuple of linear forms \(\bdl^{\rm targ}\in R_1^{n+1}\) using the standard normal distribution, and take its sum of squares \(\bar{f}=\sigma_{n+1}(\bdl^{\rm targ})\in R_2\).
Then for each different \(k\), we independently generate another tuple \(\bdl^{\rm init}\in R_1^k\) and start the LBFGS algorithm with it.
To avoid numerical issues, normalization is taken so \(\nVert{\bar{f}}=1\) and \(\nVert{\bdl^{\rm init}}=1\) in their norms on \(R_2\) and \(R_1^k\), respectively.
We also compare our nonconvex low-rank formulation against the standard semidefinite programming (SDP) formulation.
To be precise, recall from Section~\ref{sec:SurfacesOfMinimalDegree} that \(\calG_{n+1}\subset S_2\) is the set of positive semidefinite matrices, with the projection \(\kappa:S_2\to R_2\), and consider the following minimization in terms of the Gram matrix:
\begin{equation}\label{eq:SDPMinTrace}
    \begin{aligned}
        \min_{G\in\calG_{n+1}}\quad & \operatorname{tr}(G)\\
        \mathrm{s.t.}\quad & \kappa(G)=\sigma_{n+1}(\bdl^{\rm targ})\text{ in }R_2.
    \end{aligned}
\end{equation}
Any solution \(G\) satisfying the constraint in~\eqref{eq:SDPMinTrace} is a (positive semidefinite) Gram matrix that certifies the target as a sum of squares (see e.g.,~\cite[Chapter 3.1]{blekherman2012semidefinite}), but there is generally no guarantee on its rank. 
We thus choose our objective function in~\eqref{eq:SDPMinTrace} to be the trace (or equivalently, the nuclear norm) of the Gram matrix, due to its common use for finding low-rank solutions in matrix completion problems~\cite{koltchinskii2011nuclear,davenport2016overview,chi2019nonconvex}.
This SDP problem can be solved using an interior-point (IP) method~\cite{helmberg1996interior}, which we mainly access through the \texttt{CSDP} solver package~\cite{borchers1999csdp}. 
We also use the following SDP solvers for comparison: \texttt{SCS}~\cite{odonoghue2016conic}, \texttt{Hypatia}~\cite{coey2022solving}, and \texttt{Clarabel}~\cite{clarabel2024}.
The experiment procedure is then repeated 100 times and the results are summarized in Table~\ref{tab:ScrollResults}.
\begin{table}[htbp]
    \centering
    \begin{tabular}{cccc|cccc|cc}
        \hline
        \multicolumn{4}{c|}{Scroll Data} & \multicolumn{4}{c|}{Low-rank Form.} & \multicolumn{2}{c}{SDP Form.}\\
        \hline
        Heights & $m$ & $n$ & $\;k\;$ & Succ. & Unfin. & Spur. & Time (s) & \makecell{Min.\ Rank/\\Solver} & \makecell{Min.\ Time (s)/\\Solver} \\
        \hline
        \multirow{3}*{(5,10)} & \multirow{3}*{2} & \multirow{3}*{16} 
                              & 3 & 100 & 0 & 0 & 0.024 &
        \multirow{3}*{\makecell{4/\\\texttt{Hypatia,CSDP}}} & \multirow{3}*{\makecell{0.045/\\\texttt{SCS}}}\\
                              & & & 4 & 100 & 0 & 0 & 0.024 \\
                              & & & 5 & 100 & 0 & 0 & 0.025 \\
        \hline
        \multirow{3}*{(10,15)} & \multirow{3}*{2} & \multirow{3}*{26} 
                               & 3 & 100 & 0 & 0 & 0.069 &
        \multirow{3}*{\makecell{5/\\\texttt{Hypatia,CSDP}}} & \multirow{3}*{\makecell{0.050/\\\texttt{SCS}}}\\
                              & & & 4 & 100 & 0 & 0 & 0.086 \\
                              & & & 5 & 100 & 0 & 0 & 0.103 \\
        \hline
        \multirow{3}*{(15,20)} & \multirow{3}*{2} & \multirow{3}*{36} 
                               & 3 & 100 & 0 & 0 & 0.227 &
        \multirow{3}*{\makecell{6/\\\texttt{CSDP}}} & \multirow{3}*{\makecell{0.063/\\\texttt{CSDP}}}\\
                              & & & 4 & 100 & 0 & 0 & 0.232 \\
                              & & & 5 & 100 & 0 & 0 & 0.279 \\
        \hline
        \multirow{3}*{(30,40)} & \multirow{3}*{2} & \multirow{3}*{71} 
                               & 3 & 100 & 0 & 0 & 2.883 &
        \multirow{3}*{\makecell{13/\\\texttt{Hypatia,CSDP}}} & \multirow{3}*{\makecell{0.144/\\\texttt{CSDP}}}\\
                              & & & 4 & 100 & 0 & 0 & 1.213 \\
                              & & & 5 & 100 & 0 & 0 & 1.431 \\
        \hline
        \multirow{3}*{(50,60)} & \multirow{3}*{2} & \multirow{3}*{111} 
                               & 3 & 100 & 0 & 0 & 17.905 &
        \multirow{3}*{\makecell{21/\\\texttt{Hypatia,CSDP}}} & \multirow{3}*{\makecell{0.431/\\\texttt{CSDP}}}\\
                              & & & 4 & 100 & 0 & 0 & 3.417 \\
                              & & & 5 & 100 & 0 & 0 & 4.123 \\
        \hline
        \multirow{3}*{(70,80)} & \multirow{3}*{2} & \multirow{3}*{151} 
                               & 3 & 100 & 0 & 0 & 53.350 &
        \multirow{3}*{\makecell{29/\\\texttt{Hypatia,CSDP}}} & \multirow{3}*{\makecell{1.149/\\\texttt{CSDP}}}\\
                              & & & 4 & 100 & 0 & 0 & 8.116 \\
                              & & & 5 & 100 & 0 & 0 & 8.962 \\
        \hline
        \multirow{3}*{(5,10,15)} & \multirow{3}*{3} & \multirow{3}*{32} 
                                 & 4 & 100 & 0 & 0 & 0.283 &
        \multirow{3}*{\makecell{6/\\\texttt{Hypatia}}} & \multirow{3}*{\makecell{0.080/\\\texttt{SCS}}}\\
                              & & & 5 & 100 & 0 & 0 & 0.270 \\
                              & & & 6 & 100 & 0 & 0 & 0.311 \\
        \hline
        \multirow{3}*{(10,20,30)} & \multirow{3}*{3} & \multirow{3}*{62} 
                                  & 4 & 100 & 0 & 0 & 6.120 &
        \multirow{3}*{\makecell{11/\\\texttt{Hypatia,CSDP}}} & \multirow{3}*{\makecell{0.421/\\\texttt{SCS}}}\\
                              & & & 5 & 100 & 0 & 0 & 1.225 \\
                              & & & 6 & 100 & 0 & 0 & 1.373 \\
        \hline
        \multirow{3}*{(20,30,40)} & \multirow{3}*{3} & \multirow{3}*{92} 
                                  & 4 & 100 & 0 & 0 & 22.627 &
        \multirow{3}*{\makecell{14/\\\texttt{Hypatia}}} & \multirow{3}*{\makecell{0.852/\\\texttt{CSDP}}}\\
                              & & & 5 & 100 & 0 & 0 & 3.152 \\
                              & & & 6 & 100 & 0 & 0 & 3.355 \\
        \hline
        \multirow{3}*{(5,10,15,20)} & \multirow{3}*{4} & \multirow{3}*{53} 
                                    & 5 & 100 & 0 & 0 & 4.930 &
        \multirow{3}*{\makecell{9/\\\texttt{Hypatia,CSDP}}} & \multirow{3}*{\makecell{0.314/\\\texttt{SCS}}}\\
                              & & & 6 & 100 & 0 & 0 & 1.083 \\
                              & & & 7 & 100 & 0 & 0 & 1.178 \\
        \hline
        \multirow{3}*{(10,15,20,25)} & \multirow{3}*{4} & \multirow{3}*{73} 
                                     & 5 & 100 & 0 & 0 & 16.326 &
        \multirow{3}*{\makecell{12/\\\texttt{Hypatia,CSDP}}} & \multirow{3}*{\makecell{0.585/\\\texttt{CSDP}}}\\
                              & & & 6 & 100 & 0 & 0 & 2.392 \\
                              & & & 7 & 100 & 0 & 0 & 2.408 \\
        \hline
        \multirow{3}*{(15,20,25,30)} & \multirow{3}*{4} & \multirow{3}*{93} 
                                     & 5 & 100 & 0 & 0 & 42.774 &
        \multirow{3}*{\makecell{15/\\\texttt{Hypatia,CSDP}}} & \multirow{3}*{\makecell{0.933/\\\texttt{CSDP}}}\\
                              & & & 6 & 100 & 0 & 0 & 4.488 \\
                              & & & 7 & 100 & 0 & 0 & 4.267 \\
        \hline
        \multirow{3}*{(5,10,15,20,25)} & \multirow{3}*{5} & \multirow{3}*{78} 
                                       & 6 & 100 & 0 & 0 & 37.683 &
        \multirow{3}*{\makecell{13/\\\texttt{Hypatia,CSDP}}} & \multirow{3}*{\makecell{0.404/\\\texttt{CSDP}}}\\
                              & & & 7 & 100 & 0 & 0 & 4.294 \\
                              & & & 8 & 100 & 0 & 0 & 3.738 \\
        \hline
    \end{tabular}
    \caption{Some experiment results for rational normal scrolls}
    \label{tab:ScrollResults}
\end{table}

In Table~\ref{tab:ScrollResults}, in the left part ``Scroll Data,'' we specify the heights \((n_1,\dots,n_m)\) and the dimension \(m\) of the scroll, together with the dimension \(n\) of the ambient projective space and the number of squares \(k\) for our nonconvex rank-\(k\) formulation in the first columns.
In the middle part ``Low-rank Form.,'' the column ``Succ.'' shows the number of experiment runs that the LBFGS algorithm returns a tuple of linear forms \(\bdl\), such that the distance between its sum of squares and the target \(\nVert{\sigma_k(\bdl)-\bar{f}}\) is no more than the preset threshold \(\epsilon:=10^{-6}\);
the column ``Spur.'' shows the number of experiment runs where the LBFGS algorithm has converged (according to the default settings in \texttt{NLopt} package) but the distance \(\nVert{\sigma_k(\bdl)-\bar{f}}>\epsilon\);
the column ``Unfin.'' shows the number of experiment runs with neither of the above outcomes, i.e., the LBFGS algorithm has not converged within the preset time limits (600 seconds) or the preset maximum number of \(\sigma_k\) and its differential evaluations (\(20n\) evaluations).
The column ``Time'' shows the mean computational times of the experiment runs where the LBFGS algorithm has converged.
Moreover, in the right part ``SDP Form.,'' we report the minimum rank of the solutions found by the SDP formulation~\eqref{eq:SDPMinTrace}, together with the corresponding solver(s) in the column ``Min.\ Rank/Solver.''
The minimum computational time for the SDP formulation, together with the corresponding solver, is included in the last column ``Min.\ Time (s)/Solver.''
From these results, we observe that 
\begin{itemize}
    \item the LBFGS algorithm can successfully converge to a global solution within the computational budget in all experiment runs; 
    \item using a number of squares \(k\) larger than the Pythagoras number may reduce the computation time of the LBFGS method (by over 90\% in the cases of \((15,20,25,30)\)- and \((5,10,15,20,25)\)-scrolls); and
    \item the ranks of the solution found by the SDP formulation~\eqref{eq:SDPMinTrace} seem to grow with \(n\), in contrast with the low-rank formulation~\eqref{eq:SOSMinimizationProblem}.
\end{itemize}
It is noted, to our surprise, that on all the scroll instances presented in Table~\ref{tab:ScrollResults}, the minimum computational time via the SDP formulation is usually shorter than our low-rank formulation.
We thus further compare the computational efficiency of the low-rank formulation and the SDP formulation by focusing on surface scrolls (\(m=2\)) with large projective space dimension \(n\), where we consider heights \((n_1,n_2)\in\{(50,100),(100,200),(200,300),(300,400),(500,600),(700,800)\}\).
To save efforts, we only solve the SDP formulation with \texttt{CSDP} as it is the fastest on the larger instances in Table~\ref{tab:ScrollResults}.
We also remove the limits on the computational time and number of evaluations.
The results for 5 independent repetitions are plotted with logarithmic scales in Figure~\ref{fig:ComputationTimeSurfaceScroll}.
While the SDP formulation (solved by an IP method) can be faster on smaller instances, our low-rank formulation (solved by an LBFGS method) scales better on larger instances.

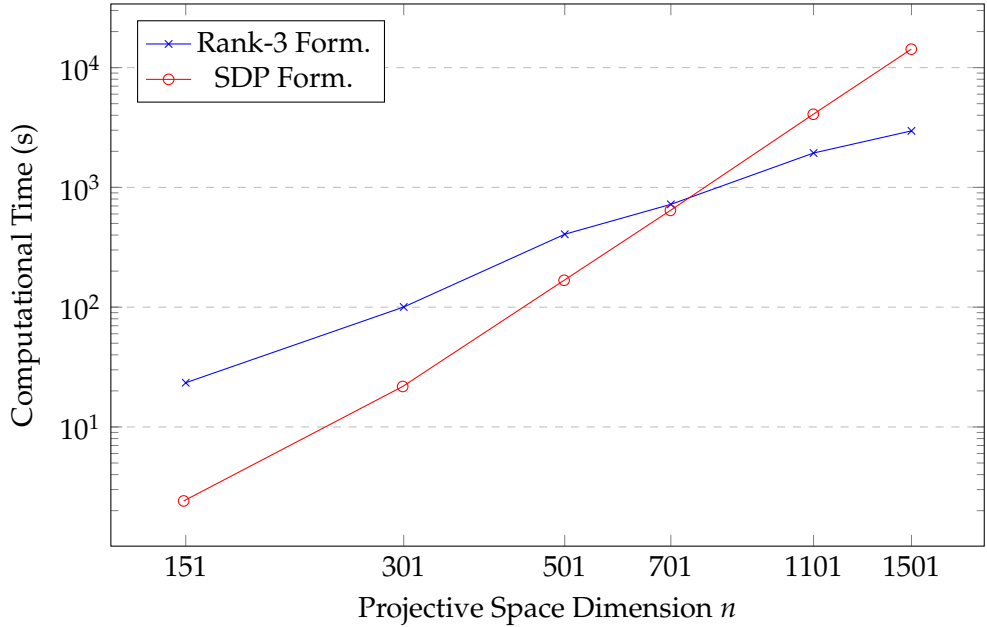
\begin{figure}[htbp]
    \centering
    \begin{tikzpicture}
    \begin{loglogaxis}[
        width=0.75\textwidth,
        height=0.5\textwidth,
        xtick={151,301,501,701,1101,1501},
        xticklabels={151,301,501,701,1101,1501},
        xlabel={Projective Space Dimension $n$},
        ylabel={Computational Time (s)},
        legend pos=north west,
        ymajorgrids=true,
        grid style=dashed,
    ]
    \addplot[
        color=blue,
        mark=x,
        ]
        coordinates {
            (151,23.39)
            (301,100.33)
            (501,406.10)
            (701,722.04)
            (1101,1936.63)
            (1501,2963.91)
        };
        \addlegendentry{Rank-3 Form.}
    \addplot[
        color=red,
        mark=o,
        ]
        coordinates {
            (150,2.41)
            (300,21.77)
            (500,167.64)
            (700,642.54)
            (1100,4079.39)
            (1500,14260.07)
        };
        \addlegendentry{SDP Form.}
    \end{loglogaxis}
    \end{tikzpicture}
    \caption{Median Computational Times for Surface Scrolls (Axes on Log Scales)}
    \label{fig:ComputationTimeSurfaceScroll}
\end{figure}

The second experiment is about plane curves.
Let \(Y\subset\bbP^2\) be a plane curve defined by a cubic homogeneous polynomial.
To certify nonnegativity of a degree-\(2d\) polynomial on the curve, we can consider the embedded curve \(X=\nu_d(Y)\subset\operatorname{span}(X)\subset\bbP^{(d+2)(d+1)/2}\), where \(\operatorname{span}(X)\) is a subspace of \(\bbP^{(d+2)(d+1)/2}\) of dimension \(\binom{d+2}{d}-\binom{d-3+2}{d-3}-1=3d-1\), so \(\deg{X}=3d>\codim{X}+1=3d-1\).
Thus \(X\) is a variety of almost minimal degree in its span, but not a variety of minimal degree.
Nevertheless, the real gonality of \(Y\) ensures that \(\Pythagoras(X)=3\)~\cite{blekherman2021sums}.
We conduct the experiments in the following way:
first we randomly generate the integer monomial coefficients (between \(-7\) and \(7\)) of the cubic defining \(Y\);
then we randomly generate \(\bar{f}\in R_2\) and then \(\bdl^{\rm init}\in R_1^k\) for each \(k\in\{3,4,5\}\) using standard normal distributions as we did in the experiments on rational normal scrolls.

The results for 100 independent experiment runs are then summarized in Table~\ref{tab:PlaneCubicResults}, where ``Cubic'' is the defining polynomial, ``Degree'' represents the degree \(d\), while the other columns ``Succ.,'' ``Unfin.,'' ``Spur.,'' and ``Time'' are the same as those in Table~\ref{tab:ScrollResults}.
Despite that \(X\) is not of minimal degree, we do not encounter any case where the LBFGS algorithm converges to a spurious stationary point.
The mean computational time grows with the number of squares \(k\) we use, for every fixed cubic curve and target degree \(d\).
This is different from the results we got in Table~\ref{tab:ScrollResults} and suggests that it is not always faster to use more squares.

\begin{table}[htbp]
    \centering
    \begin{tabular}{cccc|cccc}
        \hline
        Cubic & Degree & $n$ & $\;k\;$ & Succ. & Unfin. & Spur. & Time (s) \\
        \hline
        \multirow{15}*{\parbox{6cm}{$-3x_0^3+x_0^2x_1-x_0x_1^2-6x_1^3+3x_0^2x_2+6x_0x_1x_2+5x_1^2x_2+5x_0x_2^2+5x_1x_2^2+3x_2^3$}} 
         & \multirow{3}*{10} & \multirow{3}*{30}
         &  3 & 100 & 0 & 0 & 0.264 \\
         & & & 4 & 100 & 0 & 0 & 0.329 \\
         & & & 5 & 100 & 0 & 0 & 0.404 \\
         \cline{2-8}
         & \multirow{3}*{20} & \multirow{3}*{60} 
         &  3 & 100 & 0 & 0 & 1.869 \\
         & & & 4 & 100 & 0 & 0 & 2.421 \\
         & & & 5 & 100 & 0 & 0 & 3.003 \\
         \cline{2-8}
         & \multirow{3}*{30} & \multirow{3}*{90} 
         &  3 & 100 & 0 & 0 & 8.432 \\
         & & & 4 & 100 & 0 & 0 & 10.931 \\
         & & & 5 & 100 & 0 & 0 & 13.674 \\
         \cline{2-8}
         & \multirow{3}*{40} & \multirow{3}*{120} 
         &  3 & 100 & 0 & 0 & 12.204 \\
         & & & 4 & 100 & 0 & 0 & 15.945 \\
         & & & 5 & 100 & 0 & 0 & 19.860 \\
         \cline{2-8}
         & \multirow{3}*{50} & \multirow{3}*{150}
         &  3 & 100 & 0 & 0 & 24.242 \\
         & & & 4 & 100 & 0 & 0 & 31.905 \\
         & & & 5 & 100 & 0 & 0 & 39.570 \\
        \hline
        \multirow{15}*{\parbox{6cm}{$5x_0^3-2x_0^2x_1-x_0x_1^2+3x_1^3-x_0^2x_2+7x_0x_1^2-x_1^2x_2-5x_0x_2^2-5x_1x_2^2-2x_2^3$}} 
         & \multirow{3}*{10} & \multirow{3}*{30}
         &  3 & 100 & 0 & 0 & 0.143 \\
         & & & 4 & 100 & 0 & 0 & 0.171 \\
         & & & 5 & 100 & 0 & 0 & 0.210 \\
         \cline{2-8}
         & \multirow{3}*{20} & \multirow{3}*{60} 
         &  3 & 100 & 0 & 0 & 0.938 \\
         & & & 4 & 100 & 0 & 0 & 1.225 \\
         & & & 5 & 100 & 0 & 0 & 1.536 \\
         \cline{2-8}
         & \multirow{3}*{30} & \multirow{3}*{90} 
         &  3 & 100 & 0 & 0 & 3.175 \\
         & & & 4 & 100 & 0 & 0 & 4.141 \\
         & & & 5 & 100 & 0 & 0 & 5.203 \\
         \cline{2-8}
         & \multirow{3}*{40} & \multirow{3}*{120} 
         &  3 & 100 & 0 & 0 & 7.596 \\
         & & & 4 & 100 & 0 & 0 & 9.843 \\
         & & & 5 & 100 & 0 & 0 & 12.150 \\
         \cline{2-8}
         & \multirow{3}*{50} & \multirow{3}*{150}
         &  3 & 100 & 0 & 0 & 12.009 \\
         & & & 4 & 100 & 0 & 0 & 15.340 \\
         & & & 5 & 100 & 0 & 0 & 18.458 \\
        \hline
        \multirow{15}*{\parbox{6cm}{$-7x_0^3+3x_0^2x_1-5x_0x_1^2+5x_1^3-6x_0^2x_2-4x_0x_1x_2-6x_1^2x_2-6x_0x_2^2-7x_1x_2^2-2x_2^3$}} 
         & \multirow{3}*{10} & \multirow{3}*{30}
         &  3 & 100 & 0 & 0 & 0.185 \\
         & & & 4 & 100 & 0 & 0 & 0.219 \\
         & & & 5 & 100 & 0 & 0 & 0.268 \\
         \cline{2-8}
         & \multirow{3}*{20} & \multirow{3}*{60} 
         &  3 & 100 & 0 & 0 & 1.160 \\
         & & & 4 & 100 & 0 & 0 & 1.506 \\
         & & & 5 & 100 & 0 & 0 & 1.891 \\
         \cline{2-8}
         & \multirow{3}*{30} & \multirow{3}*{90} 
         &  3 & 100 & 0 & 0 & 3.845 \\
         & & & 4 & 100 & 0 & 0 & 5.012 \\
         & & & 5 & 100 & 0 & 0 & 6.226 \\
         \cline{2-8}
         & \multirow{3}*{40} & \multirow{3}*{120} 
         &  3 & 100 & 0 & 0 & 9.110 \\
         & & & 4 & 100 & 0 & 0 & 11.759 \\
         & & & 5 & 100 & 0 & 0 & 14.490 \\
         \cline{2-8}
         & \multirow{3}*{50} & \multirow{3}*{150}
         &  3 & 100 & 0 & 0 & 14.042 \\
         & & & 4 & 100 & 0 & 0 & 18.065 \\
         & & & 5 & 100 & 0 & 0 & 21.885 \\
        \hline
    \end{tabular}
    \caption{Some experiment results for plane cubic curves}
    \label{tab:PlaneCubicResults}
\end{table}

To further study the growth of computational times with respect to the degree \(d\), we fix our plane cubic curve to be \(Y:=V(-x_1^2x_2-4x_0x_1^2+6x_1x_2^2-7x_0x_1x_2+7x_0^2x_1-2x_2^3+3x_0x_2^2+x_0^2x_2-x_0^3)\subset\bbP^2\), and consider \(d\in\{100,200,300,400,500\}\). 
The results are plotted in Figure~\ref{fig:ComputationTimeCubicCurve} with logarithmic scales for the axes, where the rank-3 formulation outperforms the SDP formulation as \(d\) grows beyond \(200\).
This reconfirms the popular belief that the low-rank formulation could scale better than the SDP formulation and justifies our study of it being applied to sum-of-squares problems~\eqref{eq:SOSMinimizationProblem}.

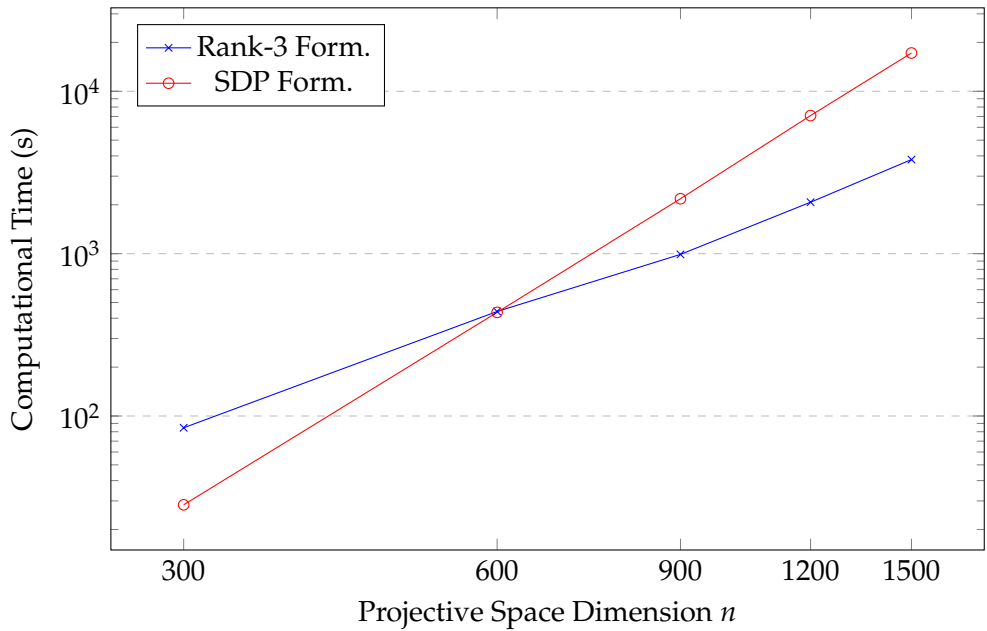
\begin{figure}[htbp]
    \centering
    \begin{tikzpicture}
    \begin{loglogaxis}[
        width=0.75\textwidth,
        height=0.5\textwidth,
        xtick={300,600,900,1200,1500},
        xticklabels={300,600,900,1200,1500},
        xlabel={Projective Space Dimension $n$},
        ylabel={Computational Time (s)},
        legend pos=north west,
        ymajorgrids=true,
        grid style=dashed,
    ]
    \addplot[
        color=blue,
        mark=x,
        ]
        coordinates {
            (300,84.63)
            (600,440.31)
            (900,989.56)
            (1200,2074.40)
            (1500,3797.04)
        };
        \addlegendentry{Rank-3 Form.}
    \addplot[
        color=red,
        mark=o,
        ]
        coordinates {
            (300,28.38)
            (600,434.83)
            (900,2179.12)
            (1200,7084.56)
            (1500,17209.14)
        };
        \addlegendentry{SDP Form.}
    \end{loglogaxis}
    \end{tikzpicture}
    \caption{Median Computational Times for Forms on a Plane Cubic Curve (Axes on Log Scales)}
    \label{fig:ComputationTimeCubicCurve}
\end{figure}

Our last experiments are conducted on Veronese varieties \(X=\nu_d(\bbP^m)\subset\bbP^n\), where \(n=\binom{d+m}{d}-1\).
Other than the case \(d=m=2\) (the Veronese surface), \(X\) is not a variety of minimal degree, and the Pythagoras number \(\Pythagoras(X)\) is not known precisely, unlike the case of plane cubic curves.
To circumvent this issue, we adopt an upper bound on \(\Pythagoras(X)\), which is
the smallest integer \(\bar{k}\) such that \(\binom{\bar{k}+1}{2}\ge\dim_\bbR(R_2)\)~\cite{blekherman2021sums}.
To study the behaviour of the LBFGS algorithm with respect to different numbers of squares, we consider \(k=\bar{k}\), \(\lceil1.1\cdot\bar{k}\rceil\), and \(\lceil1.2\cdot\bar{k}\rceil\) for comparison.
All other experiment settings are the same as those in the rational normal scrolls and plane cubic curves.
The results are summarized in Table~\ref{tab:VeroneseResults}, from which we can see that the LBFGS algorithm is still able to converge to the global minimum in all cases without being trapped at any spurious stationary points.
This indicates that the spurious local minimum we constructed in Example~\ref{ex:QuarticsSpuriousMinima} is likely rare and does not affect the success of LBFGS algorithms on generic instances.

\begin{table}[htbp]
    \centering
    \begin{tabular}{cccc|cccc}
        \hline
        $m$ & $d$ & $n$ & $\;k\;$ & Succ. & Unfin. & Spur. & Time (s) \\
        \hline
        \multirow{3}*{4}  & \multirow{3}*{2} & \multirow{3}*{14} 
                          & 12 & 100 & 0 & 0 & 0.049 \\
                          & & & 14 & 100 & 0 & 0 & 0.048 \\
                          & & & 15 & 100 & 0 & 0 & 0.051 \\
        \hline
        \multirow{3}*{6}  & \multirow{3}*{2} & \multirow{3}*{27} 
                          & 20 & 100 & 0 & 0 & 0.382 \\
                          & & & 22 & 100 & 0 & 0 & 0.407 \\
                          & & & 24 & 100 & 0 & 0 & 0.439 \\
        \hline
        \multirow{3}*{8}  & \multirow{3}*{2} & \multirow{3}*{44} 
                          & 31 & 100 & 0 & 0 & 2.483 \\
                          & & & 35 & 100 & 0 & 0 & 2.687 \\
                          & & & 38 & 100 & 0 & 0 & 2.910 \\
        \hline
        \multirow{3}*{10} & \multirow{3}*{2} & \multirow{3}*{65} 
                          & 45 & 100 & 0 & 0 & 9.641 \\
                          & & & 50 & 100 & 0 & 0 & 10.755 \\
                          & & & 54 & 100 & 0 & 0 & 11.644 \\
        \hline
        \multirow{3}*{4}  & \multirow{3}*{3} & \multirow{3}*{34} 
                          & 20 & 100 & 0 & 0 & 0.534 \\
                          & & & 22 & 100 & 0 & 0 & 0.584 \\
                          & & & 24 & 100 & 0 & 0 & 0.636 \\
        \hline
        \multirow{3}*{3}  & \multirow{3}*{4} & \multirow{3}*{34} 
                          & 18 & 100 & 0 & 0 & 0.428 \\
                          & & & 20 & 100 & 0 & 0 & 0.471 \\
                          & & & 22 & 100 & 0 & 0 & 0.518 \\
        \hline
        \multirow{3}*{2}  & \multirow{3}*{5} & \multirow{3}*{20} 
                          & 11 & 100 & 0 & 0 & 0.065 \\
                          & & & 13 & 100 & 0 & 0 & 0.077 \\
                          & & & 14 & 100 & 0 & 0 & 0.082 \\
        \hline
    \end{tabular}
    \caption{Some experiment results for Veronese varieties}
    \label{tab:VeroneseResults}
\end{table}

\section*{Acknowledgements}
Grigoriy Blekherman was partially supported by NSF DMS Grant 1901950.
Mauricio Velasco is partially supported by Fondo Clemente Estable grant FCE-1-2023-1-176172 (ANII, Uruguay). 
Shixuan Zhang was supported by NSF DMS Grant 1929284 while he was in residence at the Institute for Computational and Experimental Research in Mathematics in Providence, RI.

\bibliographystyle{alpha}
\bibliography{ref}

\end{document}